%% file: main.tex
\documentclass{article}

\input{preamble}

\title{Bounds on the Posterior-to-Prior Ratios for Inclusion Belief under Bounded Differential Privacy}

\author[1,$\dagger$]{Jan Reiter Sørensen}
\author[1]{Heidi Søgaard Christensen}
\author[1,2]{Rasmus Rask Kragh Jørgensen}
\author[1]{Martin Bøgsted}

\affil[1]{Center for Clinical Data Science, Aalborg University, Aalborg, Denmark}
\affil[2]{Department of Hematology, Aalborg University Hospital, Aalborg, Denmark}
\affil[$\dagger$]{Corresponding author: \href{mailto:janrs@dcm.aau.dk}{\textcolor{blue}{janrs@dcm.aau.dk}}}

\date{\today}

\begin{document}

    \maketitle

    \centerline{\textit{\textbf{Abstract}}}
        \input{abstract}

    \section{Introduction}
        \input{introduction}

    \section{Notation and Preliminaries}\label{sec:preliminaries}
        \input{notationAndPreliminaries}

    \section{Probabilistic Differential Privacy}\label{sec:relationship}
        \input{relationshipADPAndPDP}

    \section{Posterior-to-Prior-Ratio of Inclusion Belief}\label{sec:inclusionbelief}
        \input{inclusionBelief}

    \section{The Risk of Failing the Ratio Bounds under the Gaussian Mechanism}\label{sec:failurerate}
        \input{failingRisk}

    \section{The Empirical Failure Rate}\label{sec:empirical}
        \input{empirical}

    \section{Conclusion}\label{sec:conclusion}
        \input{conclusion}

    \bibliography{litteratur}

\end{document}

%% file: preamble.tex
\usepackage{graphicx} 
\usepackage{tikz}
\usepackage[a4paper, margin=1in]{geometry}
\usepackage{amssymb}
\usepackage{amsmath}
\usepackage{mathrsfs}
\usepackage{mathtools}
\usepackage[parfill]{parskip}
\usepackage{bbm}
\usepackage[toc,page]{appendix}
\usepackage{bm}
\usepackage{xcolor}
\usepackage{subcaption}
\definecolor{indigo}{HTML}{211A52}

\usepackage{algorithm}
\usepackage{algpseudocode}
\usepackage{booktabs}
\usepackage{authblk}

\usepackage{amsthm}
\usepackage[hidelinks]{hyperref}
\newtheorem{theorem}{Theorem}
\newtheorem{definition}{Definition}
\newtheorem{lemma}{Lemma}

\newtheorem{corollary}{Corollary}

\DeclarePairedDelimiter\norm{\lVert}{\rVert}

\newcommand\restr[2]{{
  \left.\kern-\nulldelimiterspace 
  #1 
  \littletaller 
  \right|_{#2} 
  }}

\newcommand{\littletaller}{\mathchoice{\vphantom{\big|}}{}{}{}}

\allowdisplaybreaks



\usepackage[round]{natbib}

%% file: abstract.tex
\textit{Differential privacy has become the standard for generating privacy-protected data releases. However, differential privacy does not translate intuitively to disclosure risk. In particular, it remains unclear how much an adversary's belief about an individual's inclusion in a dataset can change after observing a protected release. To address this question, we derive upper and lower bounds on the posterior-to-prior ratios of inclusion beliefs under bounded probabilistic and approximate differential privacy. By assuming a worst-case adversary with all-but-one auxiliary information, i.e., knowledge of all except for one of the participants in a dataset, we obtain bounds that apply to any adversary. Because these bounds may fail with non-zero probability, we study the corresponding failure probability for the Gaussian mechanism. We derive a theoretical upper limit on this probability and compare it with Monte Carlo estimates across a wide range of parameter settings. The observed failure rate is several orders of magnitude smaller than its theoretical upper limit, indicating that the latter is highly conservative. These findings suggest that the inferential privacy guarantees provided by differentially private mechanisms may be substantially stronger in practice than what is implied by the theoretical upper limit.}

%% file: introduction.tex
Differential privacy, first introduced by \citet{dwork2006calibrating}, has become a widely accepted standard for privacy-preserving data sharing. It has been adopted across a broad range of applications, including statistical databases \citep{christ2022differential}, web-browser telemetry collection \citep{erlingsson2014rappor}, and the training of large language models \citep{mckenna2025scaling}. By limiting the information that can be learned about an individual, differential privacy provides strong theoretical guarantees regarding disclosure risk and inference attacks. However, the practical interpretation of these guarantees remains elusive. In practice, users must select a privacy budget, yet determining an appropriate value remains challenging because privacy budgets lack a direct and intuitive interpretation in terms of privacy risk \citep{dwork2019differential}. Consequently, researchers have often turned to Bayesian adversarial frameworks to assess privacy risk \citep{wagner2018technical}. Within these frameworks, privacy risk is characterised through an adversary's posterior beliefs after observing a differentially private release. \citet{mcclure2012differential} argue that posterior probabilities of disclosure risk and inference attacks alone are insufficient for assessing privacy risk, as they do not account for the adversary's prior knowledge. Rather, the change in posterior relative to prior beliefs provides a more informative metric, as it reflects the information gained from the release of differentially private data. Therefore, characterising posterior-to-prior changes is fundamental to understanding the privacy risk under differential privacy.

Such changes can be quantified using posterior-to-prior differences and ratios \citep{jarmin2023depth}. Using these measures, \citet{kazan2024prior} derive deterministic upper and lower bounds on attribute and membership inference under unbounded pure differential privacy. \citet{kazan2025bayesian} extends these results to unbounded approximate differential privacy, obtaining probabilistic bounds that may fail with non-zero probability. Complementing this work, \citet{bailie2024general} establish general Bayesian inferential limits under pure differential privacy, yielding direct bounds on posterior-to-prior ratios for arbitrary inferences drawn from the observed data.

The bounds derived by \citet{kazan2024prior} for attribute and membership inference cover two important classes of disclosure risks. Nevertheless, adversarial inferences are not limited to these settings and also include risks such as re-identification \citep{jarmin2023depth}, data reconstruction \citep{guerra2026understanding}, and other forms of inference \citep{pilgram2025consensus}. The general inferential bound of \citet{bailie2024general} encompasses this broader spectrum of disclosure risks by applying to arbitrary Bayesian inference on differentially private releases. However, the methods of \citet{kazan2024prior} and \citet{bailie2024general} are limited to pure differential privacy, a notion known to exhibit a steep privacy-utility trade-off, where even marginal increases in privacy budget can substantially degrade statistical accuracy of a differentially private release \citep{canonne2020discrete}. Consequently, pure differential privacy is often replaced with more relaxed definitions, such as approximate differential privacy. \citet{kazan2025bayesian} subsequently extended the attribute and membership inference bounds to the unbounded approximate differential privacy setting. Despite the widespread adoption of bounded differential privacy in practice \citep{erlingsson2014rappor, christ2022differential, hayes2023bounding}, no corresponding posterior-to-prior bounds have been established for bounded approximate differential privacy.

In this paper, we partially fill this gap by deriving bounds on the posterior-to-prior ratios for inclusion beliefs under bounded approximate differential privacy. Similar to \citet{kazan2025bayesian}, our bounds are probabilistic and may therefore fail with non-zero probability. For the Gaussian mechanism, we characterise the regions of the output space where the bounds are not guaranteed to hold. This characterisation enables closed-form expressions for the probability mass of these regions, providing an upper limit on the probability that the proposed guarantees fail. Finally, we use Monte Carlo simulations to evaluate the tightness of this upper limit and find that it is highly conservative in practice.

The paper is organised as follows. \autoref{sec:preliminaries} introduces the mathematical framework and relevant privacy definitions. \autoref{sec:relationship} covers the relation between probabilistic and approximate differential privacy and derives the shape and mass of the failure regions for the Gaussian mechanism. \autoref{sec:inclusionbelief} establishes the posterior-to-prior ratio bounds under bounded probabilistic and bounded approximate differential privacy, while \autoref{sec:failurerate} derives a closed-form expression for an upper limit on their failure probability for the Gaussian mechanism. \autoref{sec:empirical} presents a Monte Carlo study evaluating the tightness of this limit. Finally, \autoref{sec:conclusion} discusses the findings, addresses limitations, and outlines directions for future research.

%% file: notationAndPreliminaries.tex
Our notation and terminology largely follow \citet{bailie2024general}. The following definition of a \textit{data universe} is adapted from their framework. 

\begin{definition}\textbf{(Data universe)}\label{def:datauniverse}\newline
    Let $\mathcal{X}$ be a set and $G\coloneqq(\mathcal{X},E)$ an undirected graph with unit-length edges. A data universe is an extended metric space $(\mathcal{X},d)$, where the metric $d:\mathcal{X}\times\mathcal{X}\to[0,\infty]$ is given by
    \begin{align*}
        d(x,x')\coloneqq\begin{cases}
            0,\quad &\text{if }x=x',\\
            k,\quad &\text{if $x\neq x'$ are connected in $G$ and $k$ is the length of the} \\
            & \text{shortest path between them,}\\
            \infty,\quad &\text{if $x,x'$ are not connected in $G$,}
        \end{cases}
    \end{align*}
    for $x,x'\in\mathcal{X}$.
\end{definition}

Intuitively, $\mathcal{X}$ represents the set of all datasets that may be drawn from a given population. A dataset $x\in\mathcal{X}$ is typically a set of records sampled from a common domain $\mathcal{D}$, that is, $x=\{r_{i}\}_{i=1}^{n}$ with $r_{i}\in\mathcal{D}$ for all $i=1,2,\ldots,n$. As an example, $\mathcal{D} = \mathbb{R}^2_{\geq0}$ may represent height and weight measurements of the population. Note for notational simplicity the graph $G$ is omitted from the data universe notation, as it is typically redundant when $d$ is given. The metric $d$ determines the notion of neighbouring datasets, with $x$ and $x'$ being neighbours whenever $d(x,x')=1$. Common examples used in differential privacy include the Hamming distance and the symmetric difference, which induce different neighbour relations.

The following definition introduces \textit{data-release mechanisms}, which are randomised mappings that transform datasets into into publicly released outputs, such as statistics, synthetic datasets, or other privacy-protected data products.

\begin{definition}\textbf{(Data-release mechanism)}\label{def:datareleasemechanism}\newline
    Let $(\mathcal{X},d)$ be a data universe and $(\mathcal{T},\mathscr{F})$ a standard Borel space. The map $M:\mathcal{X}\times[0,1]\to\mathcal{T}$ is called a data-release mechanism if $M(x,\cdot)$ is $\mathscr{B}[0,1]$-$\mathscr{F}$-measurable for all $x\in\mathcal{X}$. Additionally, we define a family of probability measures $\left\{P_{x}\right\}_{x\in\mathcal{X}}$ on $(\mathcal{T},\mathscr{F})$, where
    \begin{align}\label{eq:probmeasure}
        P_{x}(S)\coloneqq\lambda\left(\left\{u\in[0,1]\mid M(x,u)\in S\right\}\right)
    \end{align}
    for $x\in\mathcal{X}$ and $S\in\mathscr{F}$, and where $\lambda$ denotes the Lebesgue measure.
\end{definition}

The notion of data-release mechanisms encompasses both randomised and deterministic mechanisms. Examples of deterministic outputs include summary statistics such as sample means, regression coefficients, or $p$-values computed from a dataset. Randomised mechanisms incorporate an additional source of randomness, so that repeated applications to the same dataset may produce different outputs. The Gaussian mechanism, introduced below, provides an example.

\textit{Differential privacy} is a property that quantifies the level of privacy a data-release mechanism guarantees. The following definition is adapted from \citet{dwork2006our}.

\begin{definition}\textbf{(Approximate differential privacy)}\label{def:adp}\newline
    Let $(\mathcal{X},d)$ be a data universe and $M:\mathcal{X}\times[0,1]\to\mathcal{T}$ a data-release mechanism. For $\varepsilon,\delta\in\mathbb{R}_{\geq0}$, the data-release mechanism is said to satisfy $(\varepsilon,\delta)$-approximate differential privacy if
    \begin{align}\label{eq:approximateDP}
        P_{x}(S)\leq e^{\varepsilon}P_{x'}(S)+\delta
    \end{align}
    for $x,x'\in\mathcal{X}$ with $d(x,x')=1$ and $S\in\mathscr{F}$. When $\delta=0$, $M$ is said to satisfy pure $\varepsilon$-differential privacy.
\end{definition}

Two commonly used notions are \emph{unbounded} (or \emph{add/remove-one}) differential privacy, where neighbouring datasets differ by the presence of a single individual, and \emph{bounded} (or \emph{replace-one}) differential privacy, where neighbouring datasets have the same size and differ in the record of a single individual. The choice of neighbour relation affects the sensitivity of statistical queries and consequently the amount of noise required to achieve a given privacy level. To accommodate both settings, the definition above is stated with respect to a generic neighbour relation $d$.

Under pure differential privacy, symmetry of the neighbour relation implies that
\begin{align*}
    e^{-\varepsilon}P_{x'}(S)\leq P_x(S)\leq e^{\varepsilon}P_{x'}(S),
\end{align*}
meaning that the output distributions of the data-release mechanism differ by at most a factor of $e^{\varepsilon}$. The $\delta$ parameter of approximate differential privacy relaxes these bounds by allowing that
\begin{align*}
    e^{\varepsilon}P_{x'}(S)<P_x(S)\leq e^{\varepsilon}P_{x'}(S)+\delta,
\end{align*}
as long as $\delta>0$. Note that $\delta\geq1$ would render the privacy promise of approximate differential privacy vacuous.

\subsubsection*{Example: The Gaussian mechanism}
    To illustrate the preceding definitions, we present a simple example based on the Gaussian mechanism.
    
    Consider datasets consisting of $m$ records, where each record is a point in $\mathbb{R}^{2}$ representing, for instance, height and weight measurements. We then define
    \begin{align}\label{eq:domainGaussianMechanism}
        \mathcal{X}=\restr{2^{\mathbb{R}^{2}}}{m} 
    \end{align}
    as the collection of all datasets of size $m$. We equip $\mathcal{X}$ with the Hamming distance,
    \begin{align}\label{eq:Hammeren}
        d(x,x')\coloneqq\sum_{i\in x}\mathbbm{1}_{i\notin x'},
    \end{align}
    for  $x,x'\in\mathcal{X}$. Since all datasets in $\mathcal{X}$ have cardinality $m$, every pair of datasets is connected. With the above, we have now specified a data universe $(\mathcal{X},d)$.
    
    Suppose we observe a dataset $x\in\mathcal{X}$ and wish to publish the output $f(x)$ of a deterministic query $f:\mathcal{X}\to A$, where $A$ is a bounded subset of $\mathbb{R}^{2}$. For instance, $f(x)$ could be the bivariate sample mean of height and weight measurements. Releasing $f(x)$ may reveal sensitive information, and we are therefore interested in publishing it through a privacy-protecting data-release mechanism. The Gaussian mechanism proposed by \cite{dwork2014algorithmic} privatises the output of a query by adding normally distributed noise, that is,
    \begin{align}\label{eq:gaussianMechanism}
        M(x,u)\coloneqq f(x)+\sigma\Phi^{-1}(u),
    \end{align}
    where the entries of $\Phi^{-1}:[0,1]\to\mathbb{R}^{2}$ are the inverse standard normal distribution function. According to \cite{dwork2014algorithmic}, this mechanism satisfies $(\varepsilon,\delta)$-approximate differential privacy if
    \begin{align}\label{eq:ClassicalGaussian}
        \sigma>\frac{\Delta_{2}f\sqrt{2\log(1.25/\delta)}}{\varepsilon},
    \end{align}
    where $\varepsilon,\delta\in]0,1[$ and $\Delta_{2}f\in\mathbb{R}_{\geq0}$ is the $\ell_{2}$-sensitivity of $f$ defined by
    \begin{align*}
        \Delta_{2}f\coloneqq\sup_{\substack{x,x'\in\mathcal{X}\\d(x,x')=1}}\norm{f(x)-f(x')}_{2}.
    \end{align*}
    Since the Hamming distance was used to define neighbouring datasets, the Gaussian mechanism satisfies bounded $(\varepsilon,\delta)$-approximate differential privacy. If we had used the symmetric difference instead, the mechanism would satisfy unbounded $(\varepsilon,\delta)$-approximate differential privacy. Note, for this example we considered the classical Gaussian mechanism, however, other versions of the Gaussian mechanism have been proposed by, for instance, \cite{balle2018improving} and \cite{ji2024less}. 

%% file: relationshipADPAndPDP.tex
To derive bounds on the posterior-to-prior ratio under bounded approximate differential privacy, we we adapt the approach of \citet{kazan2025bayesian}, who considered the unbounded setting. Specifically, we first derive bounds under bounded \textit{probabilistic differential privacy} and then transfer these results to bounded approximate differential privacy using the implication from approximate differential privacy to probabilistic differential privacy. In this section, we restate probabilistic differential privacy and its relation to approximate differential privacy within our mathematical framework.

Probabilistic differential privacy is a variant of differential privacy that relaxes the strictness of pure $\varepsilon$-differential privacy \citep{machanavajjhala2008privacy}. Instead of adding a $\delta$, as approximate differential privacy does in \eqref{eq:approximateDP}, probabilistic differential privacy is defined as pure $\varepsilon$-differential privacy that breaks with probability at most $\delta$. The following definition is adapted from \citet{meiser2018approximate}.

\begin{definition}\textbf{(Probabilistic differential privacy)}\label{def:pdp}\newline
    Let $(\mathcal{X},d)$ be a data universe and $M:\mathcal{X}\times[0,1]\to\mathcal{T}$ a data-release mechanism. For $\varepsilon,\delta\in\mathbb{R}_{\geq0}$, the data-release mechanism $M$ is said to satisfy $(\varepsilon,\delta)$-probabilistic differential privacy if, for $x,x'\in\mathcal{X}$ with $d(x,x')=1$, there exists a set $S_{x,x'}^{\delta}\in\mathscr{F}$ with $P_{x}(S_{x,x'}^{\delta})\leq\delta$, such that 
    \begin{align}\label{eq:probabilisticDP}
            e^{-\varepsilon}P_{x'}(S\setminus S_{x,x'}^{\delta})\leq P_{x}(S\setminus S_{x,x'}^{\delta})\leq e^{\varepsilon}P_{x'}(S\setminus S_{x,x'}^{\delta})
    \end{align}
    for $S\in\mathscr{F}$. All such sets $S^{\delta}_{x,x'}$ are called bad regions.
\end{definition}

Similarly to approximate differential privacy, a value of $\delta\geq 1$ renders \autoref{def:pdp} vacuous as the privacy condition is then allowed to fail with probability one. The following theorem, adapted from \citet{zhao2019reviewing}, establishes that probabilistic differential privacy directly implies approximate differential privacy. 

\begin{theorem}\textbf{}\label{thm:PDPtoADP}\newline
    For any $\varepsilon,\delta\in\mathbb{R}_{\geq 0}$, if a data-release mechanism satisfies $(\varepsilon,\delta)$-probabilistic differential privacy it also satisfies $(\varepsilon,\delta)$-approximate differential privacy.
\end{theorem}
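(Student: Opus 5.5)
The plan is to fix neighbours $x,x'$ with $d(x,x')=1$ and an arbitrary $S\in\mathscr{F}$, and split $S$ along the bad region supplied by \autoref{def:pdp}. By assumption there is a set $S^{\delta}_{x,x'}\in\mathscr{F}$ with $P_{x}(S^{\delta}_{x,x'})\leq\delta$ for which \eqref{eq:probabilisticDP} holds for every measurable set. I will write $S$ as the disjoint union of $S\setminus S^{\delta}_{x,x'}$ and $S\cap S^{\delta}_{x,x'}$. Both pieces lie in $\mathscr{F}$, and since $P_x$ is a probability measure (\autoref{def:datareleasemechanism}), additivity gives
\begin{align*}
    P_{x}(S)=P_{x}(S\setminus S^{\delta}_{x,x'})+P_{x}(S\cap S^{\delta}_{x,x'}).
\end{align*}

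Next I bound each term separately. The first term is at most $e^{\varepsilon}P_{x'}(S\setminus S^{\delta}_{x,x'})$ by the right-hand inequality of \eqref{eq:probabilisticDP}. Monotonicity of $P_{x'}$ then bounds this by $e^{\varepsilon}P_{x'}(S)$. The second term is at most $P_{x}(S^{\delta}_{x,x'})\leq\delta$, again by monotonicity. Adding the two bounds gives $P_x(S)\leq e^{\varepsilon}P_{x'}(S)+\delta$, which is exactly \eqref{eq:approximateDP}. Since $x,x'$ and $S$ were arbitrary, this establishes $(\varepsilon,\delta)$-approximate differential privacy.

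The only point needing care is the ordering of the pair. \autoref{def:pdp} guarantees a bad region for every ordered neighbouring pair, so the set used for $(x,x')$ is the one attached to that ordering. Its mass is then controlled under $P_x$, which is precisely the measure on the left of \eqref{eq:approximateDP}. Applying the same argument to the pair $(x',x)$, with the bad region $S^{\delta}_{x',x}$, yields the reverse inequality, so symmetry of the neighbour relation causes no difficulty. I expect no real obstacle: the lower inequality in \eqref{eq:probabilisticDP} is not needed, and nothing beyond measurability of the two pieces has to be checked.
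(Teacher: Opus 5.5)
Your proposal is correct and follows the paper's proof essentially line for line. Both decompose $P_x(S)$ along the bad region $S^{\delta}_{x,x'}$, bound the part outside it by $e^{\varepsilon}P_{x'}(S\setminus S^{\delta}_{x,x'})\leq e^{\varepsilon}P_{x'}(S)$, and bound the part inside it by $P_x(S^{\delta}_{x,x'})\leq\delta$.
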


\begin{proof}\textbf{}\newline
    Let $(\mathcal{X},d)$ be a data universe and $M:\mathcal{X}\times[0,1]\to\mathcal{T}$ a data-release mechanism satisfying $(\varepsilon,\delta)$-probabilistic differential privacy. Further, let $x,x'\in\mathcal{X}$ with $d(x,x')=1$ and a set $S\in\mathscr{F}$ be given. By assumption, there exists a set $S_{x,x'}^{\delta}\in\mathscr{F}$, such that $P_{x}(S_{x,x'}^{\delta})\leq\delta$ and $\eqref{eq:probabilisticDP}$ hold. Consequently, we obtain
    \begin{align*}
        P_{x}(S)&=P_{x}(S\setminus S_{x,x'}^{\delta})+P_{x}(S\cap S_{x,x'}^{\delta})\\
        &\leq e^{\varepsilon}P_{x'}(S\setminus S_{x,x'}^{\delta})+P_{x}(S_{x,x'}^{\delta})\\
        &\leq e^{\varepsilon}P_{x'}(S)+\delta,
    \end{align*}
    establishing \eqref{eq:approximateDP}.
\end{proof}

The converse relation is not immediate. In particular, it requires proving the existence of a measurable set $S_{x,x'}^{\delta}\in\mathscr{F}$ satisfying \eqref{eq:probabilisticDP} and determining an upper bound on $P_{x}(S_{x,x'}^{\delta})$. We have therefore formulated the following lemma that states the existence of such a set and serves as a key ingredient in the proof for the converse of \autoref{thm:PDPtoADP}.

\begin{lemma}\textbf{}\label{lem:existence}\newline
    Let $(\mathcal{X},d)$ be a data universe,  $M:\mathcal{X}\times[0,1]\to\mathcal{T}$ a data-release mechanism, and $\varepsilon\in\mathbb{R}_{\geq0}$. For all $x,x'\in\mathcal{X}$, there exists a set $S_{x,x'}\in\mathscr{F}$, such that
    \begin{align}\label{eq:smallestSx}
        e^{-\varepsilon}P_{x'}(S\setminus S_{x,x'})\leq P_{x}(S\setminus S_{x,x'})&\leq e^{\varepsilon}P_{x'}(S\setminus S_{x,x'})
    \end{align}
    for $S\in\mathscr{F}$, and for any set $S_{x,x'}'\in\mathscr{F}$ satisfying \eqref{eq:smallestSx} in the place of $S_{x,x'}$, we have $P_x(S_{x,x'})\leq P_x(S_{x,x'}')$.
\end{lemma}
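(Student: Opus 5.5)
We construct $S_{x,x'}$ explicitly through Radon--Nikodym derivatives. Let $\mu\coloneqq P_x+P_{x'}$, a finite measure on $(\mathcal{T},\mathscr{F})$ dominating both $P_x$ and $P_{x'}$. By the Radon--Nikodym theorem there are measurable densities $p=\mathrm{d}P_x/\mathrm{d}\mu$ and $p'=\mathrm{d}P_{x'}/\mathrm{d}\mu$. We then define the privacy-loss region
\begin{align*}
    S_{x,x'}\coloneqq\left\{t\in\mathcal{T}\mid p(t)>e^{\varepsilon}p'(t)\ \text{or}\ p'(t)>e^{\varepsilon}p(t)\right\},
\end{align*}
which lies in $\mathscr{F}$ because $p$ and $p'$ are measurable.

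The first step verifies \eqref{eq:smallestSx}. On the complement of $S_{x,x'}$ we have $e^{-\varepsilon}p'\leq p\leq e^{\varepsilon}p'$ pointwise. For any $S\in\mathscr{F}$, integrating these inequalities against $\mu$ over $S\setminus S_{x,x'}$ gives $e^{-\varepsilon}P_{x'}(S\setminus S_{x,x'})\leq P_x(S\setminus S_{x,x'})\leq e^{\varepsilon}P_{x'}(S\setminus S_{x,x'})$.

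The second step shows minimality. Let $S'$ be any set satisfying \eqref{eq:smallestSx}. Split $S_{x,x'}$ into $A\coloneqq\{p>e^{\varepsilon}p'\}$ and $B\coloneqq\{p'>e^{\varepsilon}p\}$; these are disjoint since $\varepsilon\geq0$.

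\begin{itemize}
    \item Apply \eqref{eq:smallestSx} for $S'$ with $S=A$. Then $A\setminus S'$ satisfies $P_x(A\setminus S')\leq e^{\varepsilon}P_{x'}(A\setminus S')$. If $\mu(A\setminus S')>0$, integrating the strict pointwise inequality $p>e^{\varepsilon}p'$ over $A\setminus S'$ gives $P_x(A\setminus S')>e^{\varepsilon}P_{x'}(A\setminus S')$. This is a contradiction, so $\mu(A\setminus S')=0$.
    \item The same argument with $S=B$ and the lower inequality gives $\mu(B\setminus S')=0$.
\end{itemize}

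Hence $\mu(S_{x,x'}\setminus S')=0$, so $P_x(S_{x,x'}\setminus S')=0$. Therefore $P_x(S_{x,x'})\leq P_x(S')$.

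The main obstacle is justifying the strict integral inequality. Strict pointwise inequality on a set of positive $\mu$-measure yields a strict inequality of integrals, provided the integral of $p-e^{\varepsilon}p'$ is positive. This holds because the integrand is strictly positive on a set of positive measure; the argument needs no finiteness beyond the fact that both measures are probability measures.

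A secondary subtlety concerns the definition of the densities. The Radon--Nikodym densities are defined only $\mu$-a.e., so $S_{x,x'}$ is determined only up to $\mu$-null sets. This does not affect either claim, since both are statements about measures. Note also that the standard Borel assumption is not needed here; only $\sigma$-finiteness of $\mu$ is required, and it holds trivially.
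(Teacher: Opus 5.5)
Your proposal is correct and follows essentially the same route as the paper. Both use the Radon--Nikodym densities with respect to $\mu=P_x+P_{x'}$ and the same region $\{p>e^{\varepsilon}p'\}\cup\{p'>e^{\varepsilon}p\}$; your $A,B$ are the paper's $B_{x,x'},A_{x,x'}$. Minimality follows in both by showing that $A\setminus S'$ and $B\setminus S'$ are $\mu$-null, since the integrand has a strict sign there. You only phrase this null-set step as a contradiction.
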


\begin{proof}\textbf{}\newline
    Let $x,x'\in\mathcal{X}$ be given and let $\mu$ be any $\sigma$-finite measure such that $\mu\gg P_{x}$ and $\mu\gg P_{x'}$. Such a measure always exists, for instance by taking $\mu= P_{x}+P_{x'}$. Regardless of how $\mu$ is constructed, the corresponding densities
    \begin{align*}
        p\coloneqq\frac{dP_{x}}{d\mu}\text{ and } q\coloneqq\frac{dP_{x'}}{d\mu}
    \end{align*}
    are well defined. Define $S_{x,x'}\coloneqq A_{x,x'}\cup B_{x,x'}$, where the disjoint subsets $A_{x,x'}$ and $B_{x,x'}$ are given by
    \begin{equation}\label{eq:AxBx}
        \begin{aligned}
            A_{x,x'}&\coloneqq\left\{v\in\mathcal{T}\mid e^{-\varepsilon}q(v)>p(v)\right\},\\
            B_{x,x'}&\coloneqq\left\{v\in\mathcal{T}\mid p(v)>e^{\varepsilon}q(v)\right\}.
        \end{aligned}
    \end{equation}
    Since $p$ and $q$ are measurable, both $A_{x,x'}$ and $B_{x,x'}$ belong to $\mathscr{F}$. Hence, so does $S_{x,x'}$. By construction $ e^{-\varepsilon}q(v)\le p(v)\le e^\varepsilon q(v)$ for all $v\notin S_{x,x'}$. Therefore, for $S\in\mathscr{F}$, 
    \begin{align*}
        P_{x}(S\setminus S_{x,x'})&=\int_{S\setminus S_{x,x'}}p(v)d\mu(v)\leq e^{\varepsilon}\int_{S\setminus S_{x,x'}}q(v)d\mu(v)=e^{\varepsilon}P_{x'}(S\setminus S_{x,x'}), \\
        P_{x}(S\setminus S_{x,x'})&=\int_{S\setminus S_{x,x'}}p(v)d\mu(v)\geq e^{-\varepsilon}\int_{S\setminus S_{x,x'}}q(v)d\mu(v)=e^{-\varepsilon}P_{x'}(S\setminus S_{x,x'}).
    \end{align*}
    This proves the existence of a set $S_{x,x'}$ satisfying \eqref{eq:smallestSx}. 
    It remains to show that $S_{x,x'}$ minimises $P_x(\cdot)$ among all such sets. To this end, let $S_{x,x'}'\in\mathscr{F}$ be any other set satisfying \eqref{eq:smallestSx}, and define $D\coloneqq S_{x,x'}\setminus S_{x,x'}'$. Then
    \begin{align*}
        P_{x}(D)=P_{x}(A_{x,x'}\setminus S_{x,x'}')+P_{x}(B_{x,x'}\setminus S_{x,x'}').
    \end{align*}
    It holds that $A_{x,x'}\setminus S_{x,x'}'$ and $B_{x,x'}\setminus S_{x,x'}'$ are $\mu$-zero sets, since
    \begin{align*}
        P_{x}(A_{x,x'}\setminus S_{x,x'}')\geq e^{-\varepsilon}P_{x'}(A_{x,x'}\setminus S_{x,x'}')&\quad \Leftrightarrow \quad \int_{A_{x,x'}\setminus S_{x,x'}'}\left(p(v)-e^{-\varepsilon}q(v)\right)d\mu(v)\geq0,\\
        P_x(B_{x,x'}\setminus S_{x,x'}')\leq e^{\varepsilon}P_{x'}(B_{x,x'}\setminus S_{x,x'}')&\quad \Leftrightarrow \quad \int_{B_{x,x'}\setminus S_{x,x'}'}\left(p(v)-e^{\varepsilon}q(v)\right)d\mu(v)\leq0,
    \end{align*}
    and the integrands $p(v)-e^{-\varepsilon}q(v)$ and $p(v)-e^{\varepsilon}q(v)$ are, respectively, strictly negative and positive. Consequently, $D$ is a $\mu$-zero set and therefore also a $P_{x}$-zero set. Hence, 
    \begin{align*}
        P_{x}(S_{x,x'})=P_{x}(S_{x,x'}\cap S_{x,x'}')+P_{x}(S_{x,x'}\setminus S_{x,x'}')=P_{x}(S_{x,x'}\cap S_{x,x'}')\leq P_{x}(S_{x,x'}').
    \end{align*}
    This proves that $S_{x,x'}$ has minimal  $P_{x}$-measure among all sets satisfying \eqref{eq:smallestSx}.
\end{proof}

In essence, \autoref{lem:existence} identifies a minimal bad region for each neighbouring pair of datasets.
Using this result, we can prove the following theorem, which states the converse relation of \autoref{thm:PDPtoADP}. A proof for this result is given by \citet{zhao2019reviewing}, however, we provide an alternative proof.

\begin{theorem}\textbf{}\label{thm:ADPtoPDP}\newline
    Let $\varepsilon\in\mathbb{R}_{\geq0}$ and $\delta\in[0,1)$. If a mechanism satisfies $(\varepsilon,\delta)$-approximate differential privacy, it also satisfies $(\varepsilon',\delta')$-probabilistic differential privacy for any $\varepsilon'>\varepsilon$ and
    \begin{align*}
        \delta'=\frac{\delta(1+e^{-\varepsilon'})}{1-e^{\varepsilon-\varepsilon'}}.
    \end{align*}
\end{theorem}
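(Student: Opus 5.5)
We use \autoref{lem:existence} with $\varepsilon'$ in place of $\varepsilon$. For a neighbouring pair $x,x'$ it gives the set $S_{x,x'}=A_{x,x'}\cup B_{x,x'}$ from \eqref{eq:AxBx}, defined with $\varepsilon'$, and this set satisfies \eqref{eq:probabilisticDP} with parameter $\varepsilon'$. It therefore remains to show that $P_x(A_{x,x'})+P_x(B_{x,x'})\leq\delta'$. We fix densities $p,q$ of $P_x,P_{x'}$ with respect to $\mu=P_x+P_{x'}$ and bound the two pieces separately, each time applying the $(\varepsilon,\delta)$-approximate differential privacy inequality \eqref{eq:approximateDP} to the set itself. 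We use it in both directions, which is allowed because the neighbour relation is symmetric.

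For $B\coloneqq B_{x,x'}=\{p>e^{\varepsilon'}q\}$, applying \eqref{eq:approximateDP} to $S=B$ gives $P_x(B)\leq e^{\varepsilon}P_{x'}(B)+\delta$. On $B$ we have $q<e^{-\varepsilon'}p$, so $P_{x'}(B)\leq e^{-\varepsilon'}P_x(B)$. Combining the two,
\begin{align*}
P_x(B)\bigl(1-e^{\varepsilon-\varepsilon'}\bigr)\leq\delta,\qquad\text{so}\qquad P_x(B)\leq\frac{\delta}{1-e^{\varepsilon-\varepsilon'}}.
\end{align*}
Here $\varepsilon'>\varepsilon$ is used to make the denominator positive.

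For $A\coloneqq A_{x,x'}=\{p<e^{-\varepsilon'}q\}$, we apply \eqref{eq:approximateDP} with the roles reversed, $P_{x'}(A)\leq e^{\varepsilon}P_x(A)+\delta$. On $A$ we have $P_x(A)\leq e^{-\varepsilon'}P_{x'}(A)$. Substituting the first inequality into the second gives $P_x(A)\leq e^{\varepsilon-\varepsilon'}P_x(A)+e^{-\varepsilon'}\delta$, and hence
\begin{align*}
P_x(A)\leq\frac{e^{-\varepsilon'}\delta}{1-e^{\varepsilon-\varepsilon'}}.
\end{align*}
Adding the two bounds yields $P_x(S_{x,x'})\leq\delta(1+e^{-\varepsilon'})/(1-e^{\varepsilon-\varepsilon'})=\delta'$, which is the claimed value. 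So $S_{x,x'}$ is a bad region in the sense of \autoref{def:pdp}, and the mechanism satisfies $(\varepsilon',\delta')$-probabilistic differential privacy.

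The step most likely to cause trouble is the bookkeeping in the bound for $A$. We must use the reversed instance of \eqref{eq:approximateDP}, which relies on symmetry of the neighbour relation, and we must check that the estimate closes in terms of $P_x(A)$ rather than $P_{x'}(A)$; this is where the extra factor $e^{-\varepsilon'}$ comes from. We should also check that all quantities are finite, which holds since they are probabilities, so the rearrangements are valid. The assumption $\delta<1$ is not needed for the argument; it only keeps $\delta'$ meaningful.
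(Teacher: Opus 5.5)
Your proof is correct and follows the paper's argument. You take the minimal bad region $A_{x,x'}\cup B_{x,x'}$ from \autoref{lem:existence} at level $\varepsilon'$, bound $P_x(B_{x,x'})$ by applying \eqref{eq:approximateDP} to $B_{x,x'}$ itself, and bound $P_x(A_{x,x'})$ through the reversed instance of \eqref{eq:approximateDP}. The only difference is cosmetic: the paper notes $A_{x,x'}=B_{x',x}$, reuses the $B$-bound to control $P_{x'}(A_{x,x'})$, and then multiplies by $e^{-\varepsilon'}$, whereas you close the inequality directly in $P_x(A_{x,x'})$; the two computations are equivalent.
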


\begin{proof}\textbf{}\newline
    Let $x,x'\in\mathcal{X}$ with $d(x,x')=1$ be given. We need to show that there exists a set $S_{x,x'}^{\delta'}\in\mathscr{F}$, such that \eqref{eq:probabilisticDP} holds and
    \begin{align}\label{eq:deltaprime}
        P_{x}(S_{x,x'}^{\delta'})\leq\delta'\coloneqq\frac{\delta(1+e^{-\varepsilon'})}{1-e^{\varepsilon-\varepsilon'}}.
    \end{align}
    The existence of a set satisfying \eqref{eq:probabilisticDP} for $\varepsilon'$ is concluded by \autoref{lem:existence}, so it remains to show \eqref{eq:deltaprime}. We will assume that $S_{x,x'}^{\delta'}$ is the smallest set satisfying \eqref{eq:probabilisticDP} for a given $\varepsilon'>\varepsilon$. As in the proof of \autoref{lem:existence}, let $\mu$ be a $\sigma$-finite measure for which $\mu\gg P_{x}$ and $\mu\gg P_{x'}$, and define the densities
    \begin{align*}
        p\coloneqq\frac{dP_{x}}{d\mu}, \quad q\coloneqq\frac{dP_{x'}}{d\mu}.
    \end{align*}
    Let $A_{x,x'}$ and $B_{x,x'}$ be given as in \eqref{eq:AxBx}. Then, under $(\varepsilon, \delta)$-approximate differential privacy, we have
    \begin{align*}
        P_{x}(B_{x,x'})&\leq e^{\varepsilon}P_{x'}(B_{x,x'})+\delta\\
        &= e^{\varepsilon}\int_{B_{x,x'}}q(v)d\mu(v)+\delta\\
        &\leq e^{\varepsilon-\varepsilon'}\int_{B_{x,x'}}p(v)d\mu(v)+\delta\\
        &=e^{\varepsilon-\varepsilon'}P_{x}(B_{x,x'})+\delta,
    \end{align*}
    which implies that
    \begin{align}\label{eq:upperboundBx}
        P_{x}(B_{x,x'})\leq\frac{\delta}{1-e^{\varepsilon-\varepsilon'}}.
    \end{align}
    Consider now the set $S_{x',x}^{\delta'}\in\mathscr{F}$ obtained by exchanging $x$ and $x'$ in \autoref{def:pdp} and \autoref{lem:existence}. Proceeding analogously to the construction above, we decompose $S_{x',x}^{\delta'}$ into $A_{x',x},B_{x',x}\in\mathscr{F}$. By construction, $A_{x,x'}=B_{x',x}$ and $B_{x,x'}=A_{x',x}$. Since \eqref{eq:upperboundBx} holds for arbitrary $x\in\mathcal{X}$, we must also have 
    \begin{align*}
        P_{x'}(B_{x',x})\leq\frac{\delta}{1-e^{\varepsilon-\varepsilon'}}.
    \end{align*}
    This gives the following upper bound:
    \begin{align*}
        P_{x}(A_{x,x'})&=\int_{A_{x,x'}}p(v)d\mu(v)\\
        &\leq e^{-\varepsilon'}\int_{A_{x,x'}}q(v)d\mu(v)\\
        &=e^{-\varepsilon'}P_{x'}(A_{x,x'})\\
        &=e^{-\varepsilon'}P_{x'}(B_{x',x})\\
        &\leq e^{-\varepsilon'}\frac{\delta}{1-e^{\varepsilon-\varepsilon'}}.
    \end{align*}
    Consequently, 
    \begin{align*}
        P_{x}(S_{x,x'}^{\delta'})&=P_{x}(A_{x,x'})+P_{x}(B_{x,x'})\\
        &\leq e^{-\varepsilon'}\frac{\delta}{1-e^{\varepsilon-\varepsilon'}}+\frac{\delta}{1-e^{\varepsilon-\varepsilon'}}\\
        &=\frac{\delta(1+e^{-\varepsilon'})}{1-e^{\varepsilon-\varepsilon'}},
    \end{align*}
    which was to be shown.
\end{proof}

\autoref{thm:ADPtoPDP} shows how the $(\varepsilon,\delta)$-parameters of approximate differential privacy translates into corresponding parameters $\varepsilon'$ and $\delta'$ for probabilistic differential privacy. Specifically, $\varepsilon'$ may be chosen arbitrarily larger than $\varepsilon$, after which $\delta'$ can be calculated in closed form. The following corollary, adapted from \citet{kazan2025bayesian}, provides the inverse characterisation: $\delta'$ may be chosen arbitrarily within a suitable interval, and the corresponding value of $\varepsilon'$ is then given in closed form.

\begin{corollary}\textbf{}\label{cor:adptopdp}\newline
    A data-release mechanism satisfying $(\varepsilon,\delta)$-approximate differential privacy for $\varepsilon\in\mathbb{R}_{\geq0}$ and $\delta\in[0,1)$ also satisfies $(\varepsilon',\delta')$-probabilistic differential privacy for any $\delta'\in(\delta,1]$ and
    \begin{align*}
        \varepsilon'=\log\left(\frac{\delta'e^{\varepsilon}+\delta}{\delta'-\delta}\right).
    \end{align*}
\end{corollary}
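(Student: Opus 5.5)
The plan is to obtain the corollary directly from \autoref{thm:ADPtoPDP} by inverting the relation between $\varepsilon'$ and $\delta'$ stated there. Fix $\varepsilon\in\mathbb{R}_{\geq0}$, $\delta\in[0,1)$ and $\delta'\in(\delta,1]$, and set $t\coloneqq e^{-\varepsilon'}$. I will solve
\begin{align*}
    \delta'=\frac{\delta(1+t)}{1-e^{\varepsilon}t}
\end{align*}
for $t$. Clearing the denominator gives $\delta'-\delta=t(\delta'e^{\varepsilon}+\delta)$, so $t=(\delta'-\delta)/(\delta'e^{\varepsilon}+\delta)$. Hence $\varepsilon'=\log\bigl((\delta'e^{\varepsilon}+\delta)/(\delta'-\delta)\bigr)$, which is exactly the claimed expression. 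Since $\delta'>\delta$, the numerator and denominator are positive, so the logarithm is well defined.

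Next I need to check that this $\varepsilon'$ satisfies the hypothesis $\varepsilon'>\varepsilon$ of \autoref{thm:ADPtoPDP}. The inequality $(\delta'e^{\varepsilon}+\delta)/(\delta'-\delta)>e^{\varepsilon}$ reduces, after multiplying by $\delta'-\delta>0$, to $\delta(1+e^{\varepsilon})>0$. This holds precisely when $\delta>0$. In that case \autoref{thm:ADPtoPDP} applies with this $\varepsilon'$, and the resulting $\delta$-parameter equals the chosen $\delta'$ by construction, since the substitution is reversible because $1-e^{\varepsilon}t>0$ is equivalent to $\varepsilon'>\varepsilon$.

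The main obstacle is the boundary case $\delta=0$. Here the formula gives $\varepsilon'=\varepsilon$, and \autoref{thm:ADPtoPDP} requires a strict inequality, so it cannot be invoked. I would handle this case directly using the sets from \autoref{lem:existence}. Under pure $\varepsilon$-differential privacy, apply \eqref{eq:approximateDP} with $S=B_{x,x'}$. This gives $\int_{B_{x,x'}}(p-e^{\varepsilon}q)\,d\mu\leq0$ while the integrand is strictly positive there, so $B_{x,x'}$ is $\mu$-null. Symmetrically, applying the condition to the pair $(x',x)$ shows that $A_{x,x'}$ is $\mu$-null. Thus $S_{x,x'}=A_{x,x'}\cup B_{x,x'}$ has $P_x$-measure $0\leq\delta'$ and satisfies \eqref{eq:probabilisticDP} with $\varepsilon'=\varepsilon$, which gives $(\varepsilon,\delta')$-probabilistic differential privacy.
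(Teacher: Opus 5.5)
Your proposal is correct and takes the same route as the paper, which simply inverts the $\delta'$-formula of \autoref{thm:ADPtoPDP} to solve for $\varepsilon'$. The paper's one-line proof leaves two steps implicit that you fill in: you check that $\varepsilon'>\varepsilon$ holds exactly when $\delta>0$, and you treat $\delta=0$ separately, since there $\varepsilon'=\varepsilon$ and the theorem cannot be invoked (in that case pure differential privacy and the symmetry of $d$ even let you take the empty set as the bad region).
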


\begin{proof}\textbf{}\label{cor:alternativeEpsilonDelta}\newline
    Let $\delta'\in(\delta,1]$ be given. Solving 
    \begin{align*}
    \delta' = \frac{\delta(1+e^{-\varepsilon'})} {1-e^{\varepsilon-\varepsilon'}}
    \end{align*} 
    for $\varepsilon'$ yields 
    \begin{align*}
        \varepsilon' = \log\left( \frac{\delta'e^{\varepsilon}+\delta} {\delta'-\delta} \right),
    \end{align*}
    which was to be shown.
\end{proof}

\subsubsection*{The Geometry of the bad regions under the Gaussian mechanism}
    In this section we investigate the shape and size of the bad regions $S^{\delta}_{x,x'}$ for the Gaussian mechanism. As stated in \autoref{sec:preliminaries}, the Gaussian mechanism satisfies approximate differential privacy. By \autoref{thm:ADPtoPDP} it also satisfies probabilistic differential privacy. 

    Let $(\mathcal{X},d)$ be a data universe, and consider again the Gaussian mechanism given by \eqref{eq:gaussianMechanism}, but where $f:\mathcal{X}\to A$ and $A$ is a bounded subset of $\mathbb{R}^{q}$. In \autoref{sec:preliminaries} we specified the data universe by \eqref{eq:domainGaussianMechanism} and \eqref{eq:Hammeren}, however, the Gaussian mechanism satisfies approximate differential privacy regardless of the data universe. According to \autoref{def:pdp} there exists a set $S^{\delta}_{x,x'}\in\mathscr{B}(\mathbb{R}^{q})$, such that
    \begin{align*}
        P_{x}(S^{\delta}_{x,x'})\leq\delta
    \end{align*}
    and \eqref{eq:probabilisticDP} holds for any $S\in\mathscr{B}(\mathbb{R}^{q})$. Under the Gaussian mechanism, 
    \begin{align*}
        P_{x}(S)=\frac{1}{\sigma^q}\int_{S}\phi\left(\frac{v - f(x)}{\sigma}\right)dv
        \quad \text{and} \quad 
        P_{x'}(S)=\frac{1}{\sigma^q}\int_{S}\phi\left(\frac{v - f(x')}{\sigma}\right)dv
    \end{align*}
    for all $S\in\mathscr{B}(\mathbb{R}^{q})$, where $\phi$ denotes density of the $q$-dimensional standard normal distribution. Hence, the measures $P_{x}$ and $P_{x'}$ admit the densities $p$ and $q$ given by
    \begin{align*}
        p(v)\coloneqq \frac{1}{\sigma^q}\phi\left(\frac{v - f(x)}{\sigma}\right)\quad \text{and} \quad 
        q(v)\coloneqq \frac{1}{\sigma^q} \phi\left(\frac{v - f(x')}{\sigma}\right).
    \end{align*}
    Since the support of $\phi$ is $\mathbb{R}^{q}$, we have
    \begin{equation}\label{eq:Sdeltaxxprime}
        \begin{aligned}
            S^{\delta}_{x,x'}&=\left\{v\in\mathbb{R}^{q} \, \mid \, p(v)<e^{-\varepsilon}q(v) \, \vee \, e^{\varepsilon}q(v)<p(v)\right\}\\[0.75ex]
            &=\left\{v\in\mathbb{R}^{q}\mid \left|\log\left(\frac{p(v)}{q(v)}\right)\right|>\varepsilon\right\},
        \end{aligned}
    \end{equation}
    where
    \begin{align*}
        \log\left(\frac{p(v)}{q(v)}\right)&=\log\left(\frac{\exp\left(-\frac{1}{2\sigma^{2}}\norm{v-f(x)}^{2}_{2}\right)}{\exp\left(-\frac{1}{2\sigma^{2}}\norm{v-f(x')}^{2}_{2}\right)}\right)\\[0.75ex]
        &=\frac{1}{2\sigma^{2}}\left(\norm{v-f(x')}^{2}_{2}-\norm{v-f(x)}^{2}_{2}\right)
    \end{align*}
    with $\norm{\cdot}_{2}$ denoting the Euclidean norm. When $f(x)=f(x')$, we get $S^{\delta}_{x,x'}=\emptyset$. For $f(x)\neq f(x')$, define $\Delta f\coloneqq f(x)-f(x')$ and $m\coloneqq\frac{f(x)+f(x')}{2}$. Thus, $f(x)=m+\frac{\Delta f}{2}$ and $f(x')=m-\frac{\Delta f}{2}$, and we can now rewrite
    \begin{align*}
        \norm{v-f(x')}^{2}_{2}-\norm{v-f(x)}^{2}_{2}&=\norm{v-m+\frac{\Delta f}{2}}^{2}_{2}-\norm{v-m-\frac{\Delta f}{2}}^{2}_{2}\\[0.75ex]
        &=\left\langle v-m+\frac{\Delta f}{2},v-m+\frac{\Delta f}{2}\right\rangle-\left\langle v-m-\frac{\Delta f}{2},v-m-\frac{\Delta f}{2}\right\rangle\\[0.75ex]
        &=2\left\langle v-m,\Delta f\right\rangle.
    \end{align*}
    Combining the above yields
    \begin{align*}
       \log\left(\frac{p(v)}{q(v)}\right)=\frac{\left\langle v-m,\Delta f\right\rangle}{\sigma^{2}}=\frac{\left\langle v-m,\tilde{u}\right\rangle\Delta}{\sigma^{2}},
    \end{align*}
    where $\Delta\coloneqq\norm{\Delta f}_{2}$ and $\tilde{u}\coloneqq\Delta f/\Delta$. Substituting this into \eqref{eq:Sdeltaxxprime}, we obtain 
    \begin{align}\label{eq:badRegionShape}
        S^{\delta}_{x,x'}&=\left\{v\in\mathbb{R}^{q} \,\mid \, \left|\frac{\left\langle v-m,\tilde{u}\right\rangle\Delta}{\sigma^{2}}\right|>\varepsilon\right\}\nonumber\\[0.75ex]
        &=\left\{v\in\mathbb{R}^{q} \,\mid  \,\Big|\left\langle v,\tilde{u}\right\rangle-\left\langle m,\tilde{u}\right\rangle\Big|>\frac{\varepsilon\sigma^{2}}{\Delta}\right\}.
    \end{align}
    Thus, $S^{\delta}_{x,x'}$ is the union of the two open half-spaces defined by the inequalities $\left\langle v,\tilde{u}\right\rangle>\frac{\varepsilon\sigma^{2}}{\Delta}+\left\langle m,\tilde{u}\right\rangle$ and $\left\langle v,\tilde{u}\right\rangle<-\frac{\varepsilon\sigma^{2}}{\Delta}+\left\langle m,\tilde{u}\right\rangle$. Let $V(u)\coloneqq M(x,u)$. Using \eqref{eq:badRegionShape}, we can rewrite
    \begin{align}\label{eq:P_x_lambda}
        P_{x}(S^{\delta}_{x,x'})&=P_{x}\left(\left\{v\in\mathbb{R}^{q}\mid \, \Big|\left\langle v,\tilde{u}\right\rangle-\left\langle m,\tilde{u}\right\rangle\Big|>\frac{\varepsilon\sigma^{2}}{\Delta}\right\}\right)\nonumber \\ 
        &=\lambda\left(\left\{u\in[0,1] \,\mid \, \Big|\left\langle V(u),\tilde{u}\right\rangle-\left\langle m,\tilde{u}\right\rangle\Big|>\frac{\varepsilon\sigma^{2}}{\Delta}\right\}\right)\nonumber\\ 
        &=\lambda\left(\Big|\left\langle V,\tilde{u}\right\rangle-\left\langle m,\tilde{u}\right\rangle\Big|>\frac{\varepsilon\sigma^{2}}{\Delta}\right)\nonumber\\ 
        &=\lambda\left(\left\langle V,\tilde{u}\right\rangle-\left\langle m,\tilde{u}\right\rangle>\frac{\varepsilon\sigma^{2}}{\Delta}\right)+\lambda\left(\left\langle V,\tilde{u}\right\rangle-\left\langle m,\tilde{u}\right\rangle<-\frac{\varepsilon\sigma^{2}}{\Delta}\right). 
    \end{align}
    Since $V(u)=M(x,u)$, we have by the construction of the Gaussian mechanism in \eqref{eq:gaussianMechanism},
    \begin{align*}
        \left\langle V(u),\tilde{u}\right\rangle&=\left\langle f(x)+\sigma\Phi^{-1}(u),\tilde{u}\right\rangle\\[0.75ex]
        &=\left\langle f(x),\tilde{u}\right\rangle+\sigma\left\langle\Phi^{-1}(u),\tilde{u}\right\rangle\\[0.75ex]
        &=\left\langle m+\frac{\Delta f}{2},\tilde{u}\right\rangle+\sigma\left\langle\Phi^{-1}(u),\tilde{u}\right\rangle\\[0.75ex]
        &=\left\langle m+\frac{\Delta\cdot\tilde{u}}{2},\tilde{u}\right\rangle+\sigma\left\langle\Phi^{-1}(u),\tilde{u}\right\rangle\\[0.75ex]
        &=\left\langle m,\tilde{u}\right\rangle+\frac{\Delta}{2}+\sigma\left\langle\Phi^{-1}(u),\tilde{u}\right\rangle.
    \end{align*}
    Then, defining $N(u)\coloneqq\left\langle\Phi^{-1}(u),\tilde{u}\right\rangle$, which is standard normally distributed, \eqref{eq:P_x_lambda} becomes
    \begin{align}\label{eq:exactBadSize}
        P_{x}(S^{\delta}_{x,x'})&=\lambda\left(\left\langle m,\tilde{u}\right\rangle+\frac{\Delta}{2}+\sigma N-\left\langle m,\tilde{u}\right\rangle>\frac{\varepsilon\sigma^{2}}{\Delta}\right)+\lambda\left(\left\langle m,\tilde{u}\right\rangle+\frac{\Delta}{2}+\sigma N-\left\langle m,\tilde{u}\right\rangle<-\frac{\varepsilon\sigma^{2}}{\Delta}\right)\nonumber\\[0.75ex]
        &=\lambda\left(\frac{\Delta}{2}+\sigma N>\frac{\varepsilon\sigma^{2}}{\Delta}\right)+\lambda\left(\frac{\Delta}{2}+\sigma N<-\frac{\varepsilon\sigma^{2}}{\Delta}\right)\nonumber\\[0.75ex]
        &=\lambda\left(-N<-\frac{\varepsilon\sigma}{\Delta}+\frac{\Delta}{2\sigma}\right)+\lambda\left(N<-\frac{\varepsilon\sigma}{\Delta}-\frac{\Delta}{2\sigma}\right)\nonumber\\[0.75ex]
        &=\Phi\left(-\frac{\varepsilon\sigma}{\Delta}+\frac{\Delta}{2\sigma}\right)+\Phi\left(-\frac{\varepsilon\sigma}{\Delta}-\frac{\Delta}{2\sigma}\right).
    \end{align}
    This can be used to exactly calculate the size of the bad region for any choice of $x,x'\in\mathcal{X}$ with $d(x,x')=1$.

%% file: inclusionBelief.tex
In this section, we quantify the inferential capabilities of a Bayesian adversary under differential privacy. We focus on inclusion belief, defined as the adversary's subjective probability that a specific individual is contained in the underlying dataset. The adversary begins with a prior belief about inclusion, which is updated to a posterior belief upon observing information about the dataset. A natural decision rule is then to infer inclusion based on this posterior probability. However, it has been argued that posterior beliefs alone do not fully capture privacy risk \citep{hotz2024key}. Recent works such as \cite{kazan2024prior} and \cite{bailie2024general} have opted for posterior-to-prior ratios and differences instead. Individually, these measures do not convey the exact risk of disclosure but how much the belief of an adversary changes due to the release of dataset information. For a data holder deciding whether to release information about a dataset, the implied posterior-to-prior change is more informative than the posterior belief itself; if the change is negligible, then so is the additional privacy risk induced by the release, and vice versa. Posterior beliefs are conditioned on the outputs of a data-release mechanism, and since these outputs may follow a continuous distribution, we set up a probability theoretic framework for defining probability measures conditioned on realisations of continuous random variables.

\subsubsection*{Defining the posterior-to-prior ratio}

We assume for the rest of this paper that an observed dataset consists of a sample drawn from a finite population. Specifically, let $P\coloneqq\left\{1,2,\ldots,n\right\}$ denote a population of $n\in\mathbb{N}$ individuals, and let $m\in\{1,\ldots,n\}$ denote the fixed sample size. Define 
$$\mathcal{X}\coloneqq\restr{2^{P}}{m},$$
the collection of all possible datasets of size $m$ that can be observed from the population. For the rest of this paper, we will equip $\mathcal{X}$ with the Hamming distance $d$ defined in \eqref{eq:Hammeren}, such that $(\mathcal{X},d)$ forms a data universe. Hence, any data-release mechanism satisfying \autoref{def:adp} satisfies bounded approximate differential privacy from now on. Let $M$ be a data-release mechanism as in \autoref{def:datareleasemechanism}, and assume that the family of probability measures $\left\{P_{x}\right\}_{x\in\mathcal{X}}$ on $(\mathcal{T},\mathscr{F})$ given by $\eqref{eq:probmeasure}$ is dominated by a $\sigma$-finite measure $\nu$. In this setup, a dataset $x\in\mathcal{X}$ is represented as a subset of the population indices $P$. In practice, however, each individual is associated with a record of attributes taking values in some domain $\mathcal D$, and the data-release mechanism $M$ operates on these attribute values rather than on the indices themselves. As in \cite{kazan2025bayesian}, we assume that each individual in $P$ can be uniquely identified by its attribute record, that is, there exists a bijection between $P$ and $\mathcal{D}$ that is embedded in $M$. Consequently, we may represent a dataset by the set of individual identifiers, while $M$ internally maps these identifiers to their corresponding attribute values when producing a release. This representation is applicable regardless of whether we consider cancer registries, blood panel databases, or other collections of independent static records, provided that no two records are exactly identical.

From the perspective of a Bayesian adversary, the observed dataset is random. Hence, on some probability space $(\Omega,\mathscr{A},\mathbb{P})$, we define a random variable $X:\Omega\to\mathcal{X}$ for which a realisation represents an observed dataset. Since $P_{x}\ll\nu$ for all $x\in\mathcal{X}$, each measure $P_{x}$ admits a density $p_{x}$ with respect to $\nu$. Using these densities, we define the intensity measure as
\begin{align}\label{eq:intensitymeasure}
    P_{M,X}(t,I)\coloneqq\int_{I}p_{X(\omega)}(t)d\mathbb{P}(\omega),
\end{align}
where $t\in\mathcal{T}$ and $I\in\mathscr{A}$. Formally, \eqref{eq:intensitymeasure} requires the map $(x,t)\mapsto p_{x}(t)$ to be measurable. This follows from assuming that $M$ is $(\mathscr{A}\otimes\mathscr{B}[0,1])$-$\mathscr{F}$ measurable, in addition to the assumptions of \autoref{def:datareleasemechanism}.
Define the conditional density of the data-release mechanism by
\begin{align*}
    P_{M\mid X}(t\mid I)\coloneqq\frac{P_{M,X}(t,I)}{\mathbb{P}(I)},
\end{align*}
where $t\in\mathcal{T}$, $I\in\mathscr{A}$, and $\mathbb{P}(I)>0$. Finally, we define the conditional probability measure of $X$ given an approximate differentially private mechanism output $t\in\mathcal{T}$ and a set of auxiliary information $\mathcal{A}\in\mathscr{A}$ by
\begin{align}\label{eq:conditionalmeasure}
    P_{X|M}(I\mid t,\mathcal{A})\coloneqq\frac{P_{M,X}(t, I\cap\mathcal{A})}{P_{M,X}(t,\mathcal{A})},
\end{align}
where $P_{M,X}(t,\mathcal{A})>0$. This conditional probability measure reflects the probability that the observed dataset $x$ is sampled from $X(I)$ given a mechanism output $t\in\mathcal{T}$ and some auxiliary information about the observed dataset, represented by $\mathcal{A}$. Assuming that the adversary has auxiliary information is, as noted by \citet{guerra2026understanding}, crucial as bounding the capabilities of such an adversary will also bound the capabilities of adversaries with less information. For inclusion inference, we consider the event
\begin{align}\label{eq:inclusionInference}
    I_{i}\coloneqq\left\{\omega\in\Omega\mid i\in X(\omega)\right\}
\end{align}
for $i\in P$, which contains all realisations of the data-generating process for which individual $i$ is in the observed dataset.
For the auxiliary information, we adopt an all-but-one framework and assume that the adversary knows $m-1$ individuals in the observed dataset, i.e.
\begin{align}\label{eq:auxiliaryEvent}
    \mathcal{A}\coloneqq\left\{\omega\in\Omega\mid x^{-}\subset X(\omega)\right\}
\end{align}
for some fixed $x^{-}\subset P\setminus\left\{i\right\}$ of length $m-1$. 
Our goal is to assess upper and lower bounds to the following posterior-to-prior ratio under bounded approximate differential privacy
\begin{align}\label{eq:postpriorratio}
    \mathscr{R}(t)\coloneqq\frac{P_{X\mid M}(I_{i}\mid t,\mathcal{A})}{\mathbb{P}(I_{i}\mid\mathcal{A})}.
\end{align}

\subsubsection*{Posterior-to-prior ratio bounds under bounded differential privacy}
Before deriving the bounds, we need the following two results. The first states that probabilistic differential privacy has a pointwise equivalent version.

\begin{theorem}\textbf{}\label{thm:pointwisepDP}\newline
    Let $M$ be a data-release mechanism, and assume that its family of probability measures $\left\{P_{x}\right\}_{x\in\mathcal{X}}$ defined by \eqref{eq:probmeasure} is dominated by a $\sigma$-finite measure $\nu$. Let $p_{x}$ and $p_{x'}$ denote the densities of $P_{x}$ and $P_{x'}$, respectively, with respect to $\nu$. Then $M$ satisfies $(\varepsilon,\delta)$-probabilistic differential privacy for $\varepsilon,\delta\in\mathbb{R}_{\geq0}$ if and only if, for any $x,x'\in\mathcal{X}$ with $d(x,x')=1$ there exists a set $S^{\delta}_{x,x'}\in\mathscr{F}$ with $P_{x}(S^{\delta}_{x,x'})\leq\delta$, where
    \begin{align}\label{eq:pointwisepDP}
        e^{-\varepsilon}p_{x'}\leq p_{x}\leq e^{\varepsilon}p_{x'}
    \end{align}
    $\nu$-almost everywhere on $\mathcal{T}\setminus S^{\delta}_{x,x'}$.
\end{theorem}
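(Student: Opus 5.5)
We prove the two directions separately. In both, the same bad region $S^{\delta}_{x,x'}$ serves in the definition and in the pointwise statement. All integrals are taken with respect to $\nu$, writing $P_{x}(S)=\int_{S}p_{x}\,d\nu$.

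\emph{Pointwise condition implies probabilistic differential privacy.} Fix $x,x'$ with $d(x,x')=1$ and take the set $S^{\delta}_{x,x'}$ supplied by the hypothesis, so $P_{x}(S^{\delta}_{x,x'})\leq\delta$. For arbitrary $S\in\mathscr{F}$, the inequality \eqref{eq:pointwisepDP} holds $\nu$-a.e.\ on $S\setminus S^{\delta}_{x,x'}\subseteq\mathcal{T}\setminus S^{\delta}_{x,x'}$. Integrating it over $S\setminus S^{\delta}_{x,x'}$ gives
\begin{align*}
e^{-\varepsilon}P_{x'}(S\setminus S^{\delta}_{x,x'})\leq P_{x}(S\setminus S^{\delta}_{x,x'})\leq e^{\varepsilon}P_{x'}(S\setminus S^{\delta}_{x,x'}).
\end{align*}
This is exactly \eqref{eq:probabilisticDP}, mirroring the first half of the proof of \autoref{lem:existence}.

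\emph{Probabilistic differential privacy implies the pointwise condition.} Again fix $x,x'$ and take the bad region $S^{\delta}_{x,x'}$ from \autoref{def:pdp}. We must show that \eqref{eq:pointwisepDP} holds $\nu$-a.e.\ off this set. Consider the measurable sets
\begin{align*}
B\coloneqq\{t\in\mathcal{T}\setminus S^{\delta}_{x,x'}\mid p_{x}(t)>e^{\varepsilon}p_{x'}(t)\},\qquad A\coloneqq\{t\in\mathcal{T}\setminus S^{\delta}_{x,x'}\mid p_{x}(t)<e^{-\varepsilon}p_{x'}(t)\}.
\end{align*}
Applying \eqref{eq:probabilisticDP} with $S=B$, and using $B\setminus S^{\delta}_{x,x'}=B$, yields $\int_{B}(p_{x}-e^{\varepsilon}p_{x'})\,d\nu\leq0$. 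The integrand is strictly positive on $B$, so $\nu(B)=0$. This uses the standard fact that a strictly positive function with nonpositive integral over a set forces that set to be null. Symmetrically, taking $S=A$ and using the lower inequality gives $\int_{A}(p_{x}-e^{-\varepsilon}p_{x'})\,d\nu\geq0$ with a strictly negative integrand, so $\nu(A)=0$. Hence \eqref{eq:pointwisepDP} holds $\nu$-a.e.\ on $\mathcal{T}\setminus S^{\delta}_{x,x'}$, and the mass bound $P_{x}(S^{\delta}_{x,x'})\leq\delta$ carries over unchanged.

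The only delicate point is the null-set step, and it is the same argument as in the minimality part of \autoref{lem:existence}. It requires that $p_{x}$ and $p_{x'}$ be finite a.e.\ and integrable, which holds because both are densities of probability measures. It also requires choosing fixed versions of the densities, which is harmless since densities are unique only up to $\nu$-null sets and the conclusion is stated $\nu$-a.e.
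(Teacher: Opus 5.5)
Your proof is correct and follows the same route as the paper. One direction integrates the pointwise inequality over $S\setminus S^{\delta}_{x,x'}$; the other deduces the $\nu$-a.e.\ inequality from the set-wise one, using the same bad region. The only difference is that you spell out the test sets $S=B$ and $S=A$ and the strictly-positive-integrand null-set argument, which the paper compresses into ``since this inequality holds for arbitrary $S$''.
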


\begin{proof}
    We first prove that probabilistic differential privacy implies \eqref{eq:pointwisepDP}. Assume that $M$ satisfies $(\varepsilon,\delta)$-probabilistic differential privacy, and let $x,x'\in\mathcal{X}$ be given with $d(x,x')=1$, where $(\mathcal{X},d)$ is our data universe. By the definition of probabilistic differential privacy, there exists a set $S^{\delta}_{x,x'}\in\mathscr{F}$ with $P_{x}(S^{\delta}_{x,x'})\leq\delta$ such that \eqref{eq:probabilisticDP} holds for any $S\in\mathscr{F}$. Then
    \begin{align*}
        \int_{S\setminus S^{\delta}_{x,x'}}p_{x}(t)d\nu(t)=P_{x}(S\setminus S^{\delta}_{x,x'})\leq e^{\varepsilon}P_{x'}(S\setminus S^{\delta}_{x,x'})=\int_{S\setminus S^{\delta}_{x,x'}}e^{\varepsilon}p_{x'}(t)d\nu(t),
    \end{align*}
    where $p_{x}$ and $p_{x'}$ are densities of $P_{x}$ and $P_{x'}$, respectively, with respect to $\nu$. Since this inequality holds for arbitrary $S\in\mathscr{F}$, $p_{x}(t)\leq e^{\varepsilon}p_{x'}(t)$ for $\nu$-almost every $t\in \mathcal{T}\setminus S^{\delta}_{x,x'}$. The converse inequality can be derived analogously. This proves the first implication.

    To prove the reverse implication, let $x,x'\in\mathcal{X}$ with $d(x,x')=1$, and assume that there exists some set $S^{\delta}_{x,x'}\in\mathscr{F}$ with $P_{x}(S^{\delta}_{x,x'})\leq\delta$ such that \eqref{eq:pointwisepDP} holds for $\nu$-almost every $t\in \mathcal{T}\setminus S^{\delta}_{x,x'}$. Then, for $S\in\mathscr{F}$, 
    \begin{align*}
        P_{x}(S\setminus S^{\delta}_{x,x'})=\int_{S\setminus S^{\delta}_{x,x'}}p_{x}(t)d\nu(t)\leq\int_{S\setminus S^{\delta}_{x,x'}}e^{\varepsilon}p_{x'}(t)d\nu(t)=e^{\varepsilon}P_{x'}(S\setminus S^{\delta}_{x,x'}).
    \end{align*}
    It can similarly be shown that $P_{x}(S\setminus S^{\delta}_{x,x'})\geq e^{-\varepsilon}P_{x'}(S\setminus S^{\delta}_{x,x'})$, which proves $(\varepsilon,\delta)$-probabilistic differential privacy of $M$.
\end{proof}

\autoref{thm:pointwisepDP} shows that probabilistic differential privacy can be characterised pointwise whenever the family of probability measures defined by the data-release mechanism is dominated by a common $\sigma$-finite measure. Assuming that a mechanism satisfies the conditions of \autoref{thm:pointwisepDP}, then there exists by \autoref{lem:existence} a tight smallest set $S^{\delta}_{x,x'}\in\mathscr{F}$ constructed through \eqref{eq:AxBx} for which the inequality \eqref{eq:pointwisepDP} fails everywhere on $S^{\delta}_{x,x'}$ and holds everywhere on its complement. Hence, we can understand the bad region $S^{\delta}_{x,x'}$ as the exact region in which the data-release mechanism violates pure $\varepsilon$-differential privacy. Releasing an output in $\mathcal{T}\setminus S^{\delta}_{x,x'}$ is therefore a necessary and sufficient condition for \eqref{eq:pointwisepDP} to hold.

Using \autoref{thm:pointwisepDP}, we establish the following lemma, which forms the basis for deriving upper and lower bounds on $\mathscr{R}$ under probabilistic differential privacy.

\begin{lemma}\textbf{}\label{lem:probBounds}\newline
   Let $I_{i}$ and $\mathcal{A}$ denote the inclusion and auxiliary-information events defined in \eqref{eq:inclusionInference} and  \eqref{eq:auxiliaryEvent}, respectively, and assume that $\mathbb{P}(I_{i}^{c}\cap\mathcal{A})>0$. Let $M$ be a data-release mechanism satisfying $(\varepsilon,\delta)$-probabilistic differential privacy for $\varepsilon,\delta\in\mathbb{R}_{\geq0}$, and assume that its family of probability measures $\left\{P_{x}\right\}_{x\in\mathcal{X}}$ given by \eqref{eq:probmeasure} is dominated by a $\sigma$-finite measure $\nu$. If $x\in X(I_{i}\cap\mathcal{A})$, then
    \begin{align}\label{eq:lemineq}
        e^{-\varepsilon}\leq\frac{P_{M\mid X}(t\mid I_{i}^{c}\cap\mathcal{A})}{p_{x}(t)}\leq e^{\varepsilon}
    \end{align}
    for $\nu$-almost every $t\in\mathcal{T}\setminus S^{\delta}_{x}$, where
    \begin{align*}
        S^{\delta}_{x}\coloneqq\bigcup_{x'\in X(I_{i}^{c}\cap\mathcal{A})}S^{\delta}_{x,x'}.
    \end{align*}
\end{lemma}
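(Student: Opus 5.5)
The plan is to write the conditional density $P_{M\mid X}(t\mid I_{i}^{c}\cap\mathcal{A})$ as a convex combination of the densities $p_{x'}$ over datasets $x'\in X(I_{i}^{c}\cap\mathcal{A})$. Each of these $x'$ is a neighbour of $x$. Once that is in place, the pointwise characterisation of \autoref{thm:pointwisepDP} bounds every term of the combination between $e^{-\varepsilon}p_{x}(t)$ and $e^{\varepsilon}p_{x}(t)$.

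\textbf{Step 1: the neighbours.} Since $x\in X(I_{i}\cap\mathcal{A})$, $x$ contains $x^{-}$ and $i$, and has size $m$. Hence $x=x^{-}\cup\{i\}$. Any $x'\in X(I_{i}^{c}\cap\mathcal{A})$ contains $x^{-}$, does not contain $i$, and has size $m$. Hence $x'=x^{-}\cup\{j\}$ for some $j\notin x^{-}\cup\{i\}$. By \eqref{eq:Hammeren} this gives $d(x,x')=1$. The set $X(I_{i}^{c}\cap\mathcal{A})$ is finite, because $\mathcal{X}$ is finite for a finite population.

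\textbf{Step 2: mixture representation.} Partition $I_{i}^{c}\cap\mathcal{A}$ into the events $\{X=x'\}$ for $x'\in X(I_{i}^{c}\cap\mathcal{A})$. By the definition \eqref{eq:intensitymeasure},
\begin{align*}
P_{M\mid X}(t\mid I_{i}^{c}\cap\mathcal{A})=\sum_{x'\in X(I_{i}^{c}\cap\mathcal{A})}w_{x'}\,p_{x'}(t),\qquad w_{x'}\coloneqq\frac{\mathbb{P}(X=x')}{\mathbb{P}(I_{i}^{c}\cap\mathcal{A})}.
\end{align*}
The weights are nonnegative and sum to one, and the denominator is positive by assumption. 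Terms with $\mathbb{P}(X=x')=0$ contribute nothing and can be discarded.

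\textbf{Step 3: pointwise bounds.} For each $x'$, \autoref{thm:pointwisepDP} gives a $\nu$-null set $N_{x'}$ such that
\begin{align*}
e^{-\varepsilon}p_{x}(t)\leq p_{x'}(t)\leq e^{\varepsilon}p_{x}(t)\quad\text{for all } t\in\mathcal{T}\setminus(S^{\delta}_{x,x'}\cup N_{x'}).
\end{align*}
This is \eqref{eq:pointwisepDP} rearranged. Let $N\coloneqq\bigcup_{x'}N_{x'}$, a finite union of $\nu$-null sets and therefore $\nu$-null. For $t\notin S^{\delta}_{x}\cup N$, every term satisfies the bound above. Multiplying by the weights and summing gives
\begin{align*}
e^{-\varepsilon}p_{x}(t)\leq P_{M\mid X}(t\mid I_{i}^{c}\cap\mathcal{A})\leq e^{\varepsilon}p_{x}(t).
\end{align*}
Dividing by $p_{x}(t)$ then yields \eqref{eq:lemineq}.

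\textbf{Main obstacle: points where $p_{x}(t)=0$.} At such points the ratio in \eqref{eq:lemineq} is undefined. The bound above still gives the mixture density equal to $0$ there, so the inequality holds with the convention $0/0$ interpreted consistently. Alternatively, these points form a $P_{x}$-null set, which can be absorbed into the exceptional set or noted in the statement. A secondary point is that $S^{\delta}_{x}$ must be measurable; this holds because it is a finite union of sets in $\mathscr{F}$.
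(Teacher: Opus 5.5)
Your proposal is correct and takes the same approach as the paper. The paper writes $P_{M\mid X}(t\mid I_i^c\cap\mathcal{A})$ as the normalised integral of $p_{X(\omega)}(t)$ over $I_i^c\cap\mathcal{A}$, observes that every such $X(\omega)$ is a neighbour of $x$, and bounds the integrand by $e^{\pm\varepsilon}p_x(t)$ via \autoref{thm:pointwisepDP}, which is exactly your finite-mixture argument. Your explicit finite union of the $\nu$-null exceptional sets and your remark on points with $p_x(t)=0$ make precise details the paper leaves implicit. One correction there: $\{p_x=0\}$ is only $P_x$-null, not $\nu$-null, so it must be handled by a convention or by weakening the statement to $P_x$-almost every $t$; it cannot be absorbed into the $\nu$-null exceptional set.
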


\begin{proof}
    Rewriting the numerator, we obtain that
    \begin{align*}
        P_{M\mid X}(t\mid I_{i}^{c}\cap\mathcal{A})&=\frac{P_{M,X}(t,I^{c}_{i}\cap\mathcal{A})}{\mathbb{P}(I^{c}_{i}\cap\mathcal{A})}\\
        &=\frac{1}{\mathbb{P}(I_{i}^{c}\cap\mathcal{A})}\int_{I^{c}_{i}\cap\mathcal{A}}p_{X(\omega)}(t)d\mathbb{P}(\omega)\\
        &\stackrel{\dagger}{\leq}\frac{1}{\mathbb{P}(I_{i}^{c}\cap\mathcal{A})}\int_{I^{c}_{i}\cap\mathcal{A}}e^{\varepsilon}p_{x}(t)d\mathbb{P}(\omega)\quad\nu\text{-almost every }t\in\mathcal{T}\setminus S^{\delta}_{x}\\
        &=e^{\varepsilon}p_{x}(t),
    \end{align*}
    where $\dagger$ follows from the fact that every realisation $X(\omega)$ is a neighbour of $x$ for $\omega\in I^{c}_{i}\cap\mathcal{A}$, and thus \autoref{thm:pointwisepDP} applies. This establishes the right-hand inequality in \eqref{eq:lemineq}; the left-hand inequality follows analogously.
\end{proof}

In \autoref{lem:probBounds}, the condition $t\in\mathcal{T}\setminus S^{\delta}_{x}$ is sufficient, but not necessary, for \eqref{eq:lemineq} to hold. To see this, consider the integrals on the left- and right-hand sides of the inequality marked $\dagger$ in the proof of \autoref{lem:probBounds}. Since $X(I^{c}_{i}\cap\mathcal{A})$ is a finite set, we may write
\begin{align*}
   X(I^{c}_{i}\cap\mathcal{A}) =  \{x_{1},x_{2},\ldots,x_{q}\},
\end{align*}
for some $x_j \in \mathcal{X}$, $j = 1, \dots, q$. The integrals can then be expressed as
\begin{align*}
    \int_{I^{c}_{i}\cap\mathcal{A}}p_{X(\omega)}(t)d\mathbb{P}(\omega)&=\sum_{j=1}^{q}p_{x_{j}}(t)(\mathbb{P}\circ X^{-1})(\{x_{j}\}),\\
     \int_{I^{c}_{i}\cap\mathcal{A}}e^{\varepsilon}p_{x}(t)d\mathbb{P}(\omega)&=\sum_{j=1}^{q}e^{\varepsilon}p_{x}(t)(\mathbb{P}\circ X^{-1})(\{x_{j}\}).
\end{align*}
Assuming a uniform prior on $X(I_i^c\cap\mathcal A)$, we have \begin{align*} (\mathbb P\circ X^{-1})(\{x_j\}) = \frac{\mathbb P(I_i^c\cap\mathcal A)}{q}, \qquad j=1,\ldots,q. \end{align*} It follows that \begin{align*} P_{M\mid X}(t\mid I_i^c\cap\mathcal A) &= \frac{1}{\mathbb P(I_i^c\cap\mathcal A)} \sum_{j=1}^q p_{x_j}(t)(\mathbb P\circ X^{-1})(\{x_j\})\\ &= \frac{1}{q}\sum_{j=1}^q p_{x_j}(t). \end{align*} Consequently, the upper bound in \eqref{eq:lemineq} is equivalent to \begin{align}\label{eq:stillHolds} 
\sum_{j=1}^q p_{x_j}(t) \leq q e^\varepsilon p_x(t). 
\end{align}
Now suppose that $t\in S_x^\delta$ and, more specifically, that $t$ belongs to exactly one of the bad regions, say $t\in S^\delta_{x,x_q}$ and $t\notin\bigcup_{j=1}^{q-1}S^\delta_{x,x_j}$.
For $j=1,\ldots,q-1$, \autoref{thm:pointwisepDP} gives
\begin{align*}
    e^{-\varepsilon}p_x(t)
    \leq p_{x_j}(t)
    \leq e^\varepsilon p_x(t)
\end{align*}
for $\nu$-almost every such $t$. In particular,
\begin{align*}
    \sum_{j=1}^{q-1}p_{x_j}(t)
    \leq (q-1)e^\varepsilon p_x(t).
\end{align*}
Consider the case in which $p_{x_q}(t)>e^\varepsilon p_x(t)$.
Although the pointwise upper bound is then violated for $x_q$, the corresponding bound for the mixture continues to hold if this excess is compensated by the slack contributed by the remaining terms, that is, if
\begin{align*}
    p_{x_q}(t)-e^\varepsilon p_x(t)
    \leq (q-1)e^\varepsilon p_x(t)
    - \sum_{j=1}^{q-1}p_{x_j}(t).
\end{align*}
Indeed, this condition is equivalent to \eqref{eq:stillHolds} and therefore establishes the upper bound in \eqref{eq:lemineq}. The lower bound also holds in this case. For $j=1,\ldots,q-1$, \autoref{thm:pointwisepDP} gives
\begin{align*}
    p_{x_j}(t)\geq e^{-\varepsilon}p_x(t),
\end{align*}
while
\begin{align*}
    p_{x_q}(t)>e^\varepsilon p_x(t)
    \geq e^{-\varepsilon}p_x(t).
\end{align*}
It follows that
\begin{align*}
    \sum_{j=1}^q p_{x_j}(t)
    \geq q e^{-\varepsilon}p_x(t),
\end{align*}
which establishes the lower bound in \eqref{eq:lemineq}. Thus, \eqref{eq:lemineq} may hold even when $t\in S_x^\delta$ and the pointwise upper bound is violated for $x_q$. At the same time, \autoref{lem:probBounds} guarantees that \eqref{eq:lemineq} holds for $\nu$-almost every $t\in\mathcal T\setminus S_x^\delta$. Hence, up to a $\nu$-null set, the actual failure region is contained in $S_x^\delta$, and 
\begin{align*} 
P_x\!\left( \left\{ t\in\mathcal T\mid \eqref{eq:lemineq}\text{ fails at }t \right\} \right) \leq P_x(S_x^\delta). \end{align*} 
Thus, $S_x^\delta$ is a conservative failure region.

We now use \autoref{lem:probBounds} to establish bounds on the posterior-to-prior ratio $\mathscr{R}$ under bounded probabilistic differential privacy.

\begin{theorem}\textbf{}\label{thm:pDPRatioBounds}\newline
    Assume that $\mathbb{P}(I_{i}\cap\mathcal{A})>0$ and $\mathbb{P}(I_{i}^{c}\cap\mathcal{A})>0$, where $I_{i}$ and $\mathcal{A}$ are given by \eqref{eq:inclusionInference} and \eqref{eq:auxiliaryEvent}, respectively. Let  $M$ be a data-release mechanism satisfying $(\varepsilon,\delta)$-probabilistic differential privacy for $\varepsilon,\delta\in\mathbb{R}_{\geq0}$, and assume that its family of probability measures $\left\{P_{x}\right\}_{x\in\mathcal{X}}$ given by \eqref{eq:probmeasure} is dominated by a $\sigma$-finite measure $\nu$. If $x\in X(I_{i}\cap\mathcal{A})$, then
    \begin{equation}\label{eq:pDPBounds}
          \frac{1}{\mathbb{P}(I_{i}\mid\mathcal{A})+(1-\mathbb{P}(I_{i}\mid\mathcal{A}))e^{\varepsilon}}\leq\mathscr{R}(t)
          \leq\frac{1}{\mathbb{P}(I_{i}\mid\mathcal{A})+(1-\mathbb{P}(I_{i}\mid\mathcal{A}))e^{-\varepsilon}}
    \end{equation}
    for $\nu$-almost every $t\in\mathcal{T}\setminus S^{\delta}_{x}$, where
    \begin{align*}
        S^{\delta}_{x}\coloneqq\bigcup_{x'\in X(I_{i}^{c}\cap\mathcal{A})}S^{\delta}_{x,x'}.
    \end{align*}
\end{theorem}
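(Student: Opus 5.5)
The plan is to write the posterior in closed form as a function of the likelihood ratio controlled by \autoref{lem:probBounds}, and then use monotonicity. The first observation is that, under the all-but-one auxiliary information \eqref{eq:auxiliaryEvent}, the event $I_{i}\cap\mathcal{A}$ pins down the dataset completely. Since $|X(\omega)|=m$, $x^{-}\subset X(\omega)$ with $|x^{-}|=m-1$, and $i\in X(\omega)$, we get $X(\omega)=x^{-}\cup\{i\}$. Hence $X(I_{i}\cap\mathcal{A})=\{x\}$ with $x=x^{-}\cup\{i\}$. Moreover, every element of $X(I_{i}^{c}\cap\mathcal{A})$ has the form $x^{-}\cup\{j\}$ with $j\neq i$, so it is a Hamming neighbour of $x$; this is what makes \autoref{lem:probBounds} applicable.

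Using this, the numerator of \eqref{eq:conditionalmeasure} is
\[
P_{M,X}(t,I_{i}\cap\mathcal{A})=\int_{I_{i}\cap\mathcal{A}}p_{X(\omega)}(t)\,d\mathbb{P}(\omega)=p_{x}(t)\,\mathbb{P}(I_{i}\cap\mathcal{A}).
\]
The denominator splits over the disjoint events $I_{i}\cap\mathcal{A}$ and $I_{i}^{c}\cap\mathcal{A}$:
\[
P_{M,X}(t,\mathcal{A})=p_{x}(t)\,\mathbb{P}(I_{i}\cap\mathcal{A})+P_{M\mid X}(t\mid I_{i}^{c}\cap\mathcal{A})\,\mathbb{P}(I_{i}^{c}\cap\mathcal{A}).
\]
Write $\pi\coloneqq\mathbb{P}(I_{i}\mid\mathcal{A})$ and $r(t)\coloneqq P_{M\mid X}(t\mid I_{i}^{c}\cap\mathcal{A})/p_{x}(t)$. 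Dividing the numerator and denominator by $p_{x}(t)\,\mathbb{P}(\mathcal{A})$ and then dividing by the prior $\pi$ gives
\[
\mathscr{R}(t)=\frac{1}{\pi+(1-\pi)\,r(t)}.
\]

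The final step is to insert the bounds $e^{-\varepsilon}\leq r(t)\leq e^{\varepsilon}$ from \autoref{lem:probBounds}. These hold for $\nu$-almost every $t\in\mathcal{T}\setminus S^{\delta}_{x}$, and $S^{\delta}_{x}$ there is exactly the union appearing in the statement. Since $r\mapsto 1/(\pi+(1-\pi)r)$ is nonincreasing on $(0,\infty)$ because $1-\pi\geq 0$, the bound $r\leq e^{\varepsilon}$ yields the lower bound in \eqref{eq:pDPBounds}, and $r\geq e^{-\varepsilon}$ yields the upper bound.

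The main obstacle is well-definedness: the ratio $r(t)$ and the posterior require $p_{x}(t)>0$ and $P_{M,X}(t,\mathcal{A})>0$. I would handle this by noting what happens on $\{p_{x}=0\}\setminus S^{\delta}_{x}$. There, \autoref{thm:pointwisepDP} gives $p_{x'}\leq e^{\varepsilon}p_{x}=0$ $\nu$-a.e. for every neighbour $x'$, so $P_{M,X}(t,\mathcal{A})=0$ and the posterior is undefined. That set is therefore excluded from the domain of $\mathscr{R}$, and it is $P_{x}$-null anyway. On its complement both denominators are strictly positive, using $\mathbb{P}(I_{i}\cap\mathcal{A})>0$. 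The remaining exceptional sets are finitely many $\nu$-null sets, one per $x'$ in the finite set $X(I_{i}^{c}\cap\mathcal{A})$, so their union is still $\nu$-null.
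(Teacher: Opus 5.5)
Your proposal is correct and follows essentially the same route as the paper. Both write $P_{M,X}(t,I_i\cap\mathcal{A})=p_x(t)\mathbb{P}(I_i\cap\mathcal{A})$, split the denominator over $I_i\cap\mathcal{A}$ and $I_i^c\cap\mathcal{A}$, express $\mathscr{R}(t)$ as the nonincreasing function $1/(\pi+(1-\pi)r(t))$ of the ratio controlled by \autoref{lem:probBounds}, and substitute $e^{-\varepsilon}\leq r(t)\leq e^{\varepsilon}$. Your explicit checks are sound and fill in details the paper leaves implicit: that $X(I_i\cap\mathcal{A})=\{x\}$, that every element of $X(I_i^c\cap\mathcal{A})$ is a Hamming neighbour of $x$, and that the set $\{p_x=0\}\setminus S^{\delta}_{x}$, where $\mathscr{R}$ is undefined, must be excluded.
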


\begin{proof}
First note that 
\begin{align*}
        P_{M,X}(t,I_{i}\cap\mathcal{A})
        =\int_{I_{i}\cap\mathcal{A}}p_{X(\omega)}(t)d\mathbb{P}(\omega)
        =\int_{I_{i}\cap\mathcal{A}}p_{x}(t)d\mathbb{P}(\omega)
        =p_{x}(t)\mathbb{P}(I_{i}\cap\mathcal{A}),
\end{align*}
since $X(\omega)=x$ for all $\omega\in I_i\cap\mathcal A$.
Hence, 
\begin{align*} 
P_{M\mid X}(I_i\mid t,\mathcal A) 
&= \frac{P_{M,X}(t,I_i\cap\mathcal A)} {P_{M,X}(t,\mathcal A)} \\[0.75ex]
&= \frac{P_{M,X}(t,I_i\cap\mathcal A)} {P_{M,X}(t,I_i\cap\mathcal A) +P_{M,X}(t,I_i^c\cap\mathcal A)} \\[0.75ex]
&= \frac{p_x(t)\mathbb P(I_i\cap\mathcal A)} {p_x(t)\mathbb P(I_i\cap\mathcal A) +P_{M\mid X}(t\mid I_i^c\cap\mathcal A) \mathbb P(I_i^c\cap\mathcal A)} \\[0.75ex]
&= \frac{\mathbb P(I_i\mid\mathcal A)} {\mathbb P(I_i\mid\mathcal A) +\mathbb P(I_i^c\mid\mathcal A) \frac{P_{M\mid X}(t\mid I_i^c\cap\mathcal A)} {p_x(t)}} \\[0.75ex]
&\le \frac{\mathbb P(I_i\mid\mathcal A)} {\mathbb P(I_i\mid\mathcal A) +(1-\mathbb P(I_i\mid\mathcal A))e^{-\varepsilon}}, 
\end{align*}
for $\nu$-almost every $t\in\mathcal{T}\setminus S^{\delta}_{x}$, where the final inequality follows from \autoref{lem:probBounds}. The lower bound in \eqref{eq:pDPBounds} is obtained analogously.
    
\end{proof}

As discussed following \autoref{lem:probBounds}, the condition $t\in\mathcal T\setminus S_x^\delta$ is sufficient, but not necessary, for the bounds in \eqref{eq:pDPBounds} to hold. Indeed, these bounds are derived from \eqref{eq:lemineq}, which is guaranteed to hold outside $S_x^\delta$ but may also hold within this region. Consequently, $S_x^\delta$ is a conservative failure region for \eqref{eq:pDPBounds}, and the probability that these bounds fail is at most $P_x(S_x^\delta)$.

Using \autoref{thm:ADPtoPDP}, we can reformulate \autoref{thm:pDPRatioBounds} to hold for approximately differentially private mechanisms.

\begin{corollary}\textbf{}\label{cor:adpPostPriorBounds}\newline
    Assume that $\mathbb{P}(I_{i}\cap\mathcal{A})>0$ and $\mathbb{P}(I_{i}^{c}\cap\mathcal{A})>0$, where $I_{i}$ and $\mathcal{A}$ are given by \eqref{eq:inclusionInference} and \eqref{eq:auxiliaryEvent}, respectively. Let  $M$ be a data-release mechanism satisfying $(\varepsilon',\delta')$-approximate differential privacy for $\varepsilon',\delta'\in\mathbb{R}_{\geq0}$, and assume that its family of probability measures $\left\{P_{x}\right\}_{x\in\mathcal{X}}$ given by \eqref{eq:probmeasure} is dominated by a $\sigma$-finite measure $\nu$. If $x\in X(I_{i}\cap\mathcal{A})$, then
    \begin{equation}\label{eq:ADPBounds}
        \begin{aligned}
            \frac{1}{\mathbb{P}(I_{i}\mid\mathcal{A})+(1-\mathbb{P}(I_{i}\mid\mathcal{A}))e^{\varepsilon}}&\leq\mathscr{R}(t)\leq\frac{1}{\mathbb{P}(I_{i}\mid\mathcal{A})+(1-\mathbb{P}(I_{i}\mid\mathcal{A}))e^{-\varepsilon}},
        \end{aligned}
    \end{equation}
    for $\nu$-almost every $t\in\mathcal{T}\setminus S^{\delta}_{x}$, where 
    \begin{align*}
        S^{\delta}_{x}\coloneqq\bigcup_{x'\in X(I_{i}^{c}\cap\mathcal{A})}S^{\delta}_{x,x'},
    \end{align*} 
    $\delta\in[\delta',1)$, and
    \begin{align*}
        \varepsilon=\log\left(\frac{\delta e^{\varepsilon'
}+\delta'}{\delta-\delta'}\right).
    \end{align*}
\end{corollary}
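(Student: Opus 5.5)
The plan is to reduce the corollary to \autoref{thm:pDPRatioBounds} by converting approximate differential privacy into probabilistic differential privacy with \autoref{cor:adptopdp}. The primed and unprimed parameters play swapped roles relative to that corollary. Here the mechanism satisfies $(\varepsilon',\delta')$-approximate differential privacy, and we aim for $(\varepsilon,\delta)$-probabilistic differential privacy with $\delta\in[\delta',1)$.

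First, fix $\delta$ and apply \autoref{cor:adptopdp} with the roles of $(\varepsilon,\delta)$ and $(\varepsilon',\delta')$ interchanged. This requires $\delta'\in[0,1)$, which follows from $\delta'\le\delta<1$. It shows that $M$ satisfies $(\varepsilon,\delta)$-probabilistic differential privacy with
\begin{align*}
\varepsilon=\log\left(\frac{\delta e^{\varepsilon'}+\delta'}{\delta-\delta'}\right).
\end{align*}
The formula is exactly the one in the statement. Since $\delta\ge\delta'$, the fraction is at least $e^{\varepsilon'}$, so $\varepsilon\ge\varepsilon'\geq 0$ and $\varepsilon\in\mathbb{R}_{\geq 0}$, as required by \autoref{thm:pDPRatioBounds}.

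Second, invoke \autoref{thm:pDPRatioBounds} with these parameters. The remaining hypotheses carry over verbatim: $\mathbb{P}(I_i\cap\mathcal A)>0$, $\mathbb{P}(I_i^c\cap\mathcal A)>0$, domination of $\{P_x\}$ by a $\sigma$-finite $\nu$, and $x\in X(I_i\cap\mathcal A)$. The theorem then yields \eqref{eq:ADPBounds} for $\nu$-almost every $t\notin S^\delta_x$. Here each $S^\delta_{x,x'}$ is a bad region from \autoref{def:pdp} for the $(\varepsilon,\delta)$-probabilistic guarantee; concretely, one can take the minimal set from \autoref{lem:existence}.

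The main obstacle is the endpoint $\delta=\delta'$, which \autoref{cor:adptopdp} excludes since it requires $\delta'$ strictly above the approximate-DP $\delta$. The formula then gives division by zero, and I would handle it in two cases.

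\textbf{Case $\delta'>0$.} Here $\varepsilon=\infty$ and the bounds read $0\le\mathscr R(t)\le 1/\mathbb{P}(I_i\mid\mathcal A)$. These hold trivially for every $t$, since the posterior lies in $[0,1]$, so any bad region (for example the empty set) works.

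\textbf{Case $\delta'=\delta=0$.} The expression $0/0$ must be read as its natural limit. Pure $\varepsilon'$-differential privacy gives the pointwise density bounds $\nu$-a.e. directly, so \autoref{thm:pointwisepDP} applies with $S^\delta_{x,x'}=\emptyset$ and $\varepsilon=\varepsilon'$.

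I would therefore state the proof for $\delta\in(\delta',1)$ and remark on this boundary convention separately.
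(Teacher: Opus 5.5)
Your proposal is correct and takes the same route as the paper, whose entire proof is the remark that the corollary follows from \autoref{cor:adptopdp} (with primed and unprimed parameters interchanged, as you do) combined with \autoref{thm:pDPRatioBounds}. Your separate treatment of the endpoint $\delta=\delta'$ is a genuine refinement that the paper's one-line proof passes over: the stated range $[\delta',1)$ admits this value, but \autoref{cor:adptopdp} excludes it and the formula for $\varepsilon$ degenerates there.
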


\begin{proof}
    The corollary follows immediately from \autoref{cor:adptopdp} and \autoref{thm:pDPRatioBounds}.
\end{proof}

The upper and lower bounds in \eqref{eq:pDPBounds} and \eqref{eq:ADPBounds} depend on the prior probability $\mathbb{P}(I_{i}\mid\mathcal{A})$ adopted by the Bayesian adversary. However, \cite{kazan2025bayesian} show that the posterior-to-prior ratio also satisfies the prior-independent bounds
\begin{align}\label{eq:priorIndep}
    e^{-\varepsilon}\leq\mathscr{R}(t)\leq e^{\varepsilon},
\end{align}
for $\nu$-almost every $t\in\mathcal{T}\setminus S^{\delta}_{x}$. 
As these bounds define a wider interval, the probability that $\mathscr{R}(t)$ falls outside them is not greater than for the corresponding prior-dependent bounds.

Quantifying when the posterior-to-prior ratio bounds fail is important when assessing whether the release of differentially private data poses an unacceptable disclosure risk. Finding a general expression for $P_{x}(S^{\delta}_{x})$ is elusive. However, an upper limit can be obtained by exploiting the subadditivity of $P_{x}$, namely, 
\begin{align*}
    P_{x}(S^{\delta}_{x})=P_{x}\left(\bigcup_{x'\in X(I_{i}^{c}\cap\mathcal{A})}S^{\delta}_{x,x'}\right)\leq\sum_{x'\in X(I_{i}^{c}\cap\mathcal{A})}P_{x}(S^{\delta}_{x,x'})=q\delta,
\end{align*}
where $q=|X(I_{i}^{c}\cap\mathcal{A})|$. In our setting, $q=n-m$ may be very large, rendering the upper limit vacuous. As the sets $S^{\delta}_{x,x'}$, $x'\in X(I_{i}^{c}\cap\mathcal{A})$, may overlap, $P_{x}(S^{\delta}_{x})$ can be significantly smaller than $q\delta$. Nevertheless, for the Gaussian mechanism an exact expression for $P_{x}(S^{\delta}_{x})$ can be derived, as shown in the following section.

%% file: failingRisk.tex
To quantify the probability of violating the bounds in \eqref{eq:pDPBounds} and \eqref{eq:ADPBounds}, it remains to characterise $P_x(S_x^\delta)$. Since a closed-form expression for $P_x(S^\delta_{x,x'})$ was derived for the Gaussian mechanism in \autoref{sec:relationship}, we can now use it to characterise $P_x(S^\delta_x)$. Recall that
\begin{align*}
    S^{\delta}_{x}=\bigcup_{x'\in X(I_{i}^{c}\cap\mathcal{A})}S^{\delta}_{x,x'},
\end{align*}
where $I_{i}$ is given by \eqref{eq:inclusionInference}, $\mathcal{A}$ is given by \eqref{eq:auxiliaryEvent}, and $x\in X(I_{i}\cap\mathcal{A})$. Note that $X(I_{i}\cap\mathcal{A})= \{ x^{-}\cup\{i\}\}$, while $X(I^{c}_{i}\cap\mathcal{A})$ contains $p\coloneqq n-m+1$ elements, say $\{x_{1},x_{2},\ldots,x_{p}\}$.
Then
\begin{align*}
    P_{x}(S^{\delta}_{x})
    =P_{x}\left(\bigcup_{i=1}^{p} S^{\delta}_{x,x_{i}}\right)=P_{x}\left(\bigcup_{i=1}^{p}\left(S^{\delta}_{x,x_{i}}\setminus\bigcup_{j=1}^{i-1}S^{\delta}_{x,x_{j}}\right)\right),
\end{align*}
with the convention that $\bigcup_{j=1}^{0}S^{\delta}_{x,x_{j}}=\emptyset$.
Since the outer union is taken over disjoint sets, the additivity of $P_{x}$ implies that 
\begin{align}\label{eq:disjointFamily}
    P_{x}\left(\bigcup_{i=1}^{p}\left(S^{\delta}_{x,x_{i}}\setminus\bigcup_{j=1}^{i-1}S^{\delta}_{x,x_{j}}\right)\right)&=\sum_{i=1}^{p}P_{x}\left(S^{\delta}_{x,x_{i}}\setminus\bigcup_{j=1}^{i-1}S^{\delta}_{x,x_{j}}\right)\nonumber\\
    &=\sum_{i=1}^{p}P_{x}\left(S^{\delta}_{x,x_{i}}\cap\bigcap_{j=1}^{i-1}\left(S^{\delta}_{x,x_{j}}\right)^{c}\right),
\end{align}
where, by convention,  $\bigcap_{j=1}^{0}\left(S^{\delta}_{x,x_{j}}\right)^{c}=\mathcal{T}$.

To facilitate the analysis of the summands above, we first introduce some notation. Recall the Gaussian mechanism $M$ described by 
\begin{align*}
    M(x,u)\coloneqq f(x)+\sigma\Phi^{-1}(u),
\end{align*}
where $f:\mathcal{X}\to A\subset\mathbb{R}^{q}$ is a deterministic function and the entries of $\Phi^{-1}:[0,1]\to\mathbb{R}^{q}$ are inverse standard normal distribution functions. Define $m_{j}\coloneqq\frac{1}{2}(f(x)+f(x_{j}))$, $\Delta f_{j}\coloneqq f(x)-f(x_{j})$, $\Delta_{j}\coloneqq\norm{\Delta f_{j}}_{2}$, and $\tilde{u}_{j}\coloneqq\Delta f_{j}/\Delta_{j}$ for each $j=1,2,\ldots,p$ with $f(x)\neq f(x_{j})$. Note, for $f(x)=f(x_{j})$, we have $S^{\delta}_{x,x_{j}}=\emptyset$ as shown in \autoref{sec:relationship}. Using this notation, we have from \eqref{eq:badRegionShape} that
\begin{align*}
    S^{\delta}_{x,x_{j}}=\left\{v\in\mathbb{R}^{q}\;\middle|\; |\langle v-m_{j},\tilde{u}_{j}\rangle|>\frac{\varepsilon\sigma^{2}}{\Delta_{j}}\right\}.
\end{align*}
Define the random variables $V(u)\coloneqq M(x,u)$ and $W_{j}(u)\coloneqq\langle V(u)-m_{j},\tilde{u}_{j}\rangle$, $j=1,2,\ldots,p$. For $U\sim \mathrm{Unif}[0,1]$, we have that $V(U)\sim N(f(x),\sigma^{2}I_{q})$, and in turn
\begin{align*}
    \mathbb{E}\left[W_{j}\right]&=\mathbb{E}\left[\langle V-m_{j},\tilde{u}_{j}\rangle\right]=\langle f(x)-m_{j},\tilde{u}_{j}\rangle=\frac{\Delta_{j}}{2},\\
    \text{Var}\left[W_{j}\right]&=\text{Var}\left[\langle V-m_{j},\tilde{u}_{j}\rangle\right]
    =\sigma^{2}.
\end{align*}
We can then define the standardised version of $W_{j}$,
\begin{align*}
    N_{j}\coloneqq\frac{W_{j}-\Delta_{j}/2}{\sigma}.
\end{align*}
Combining the above, we can reexpress $P_{x}(S^{\delta}_{x,x_{j}})$ in terms of the Lebesgue measure, 
\begin{align*}
    P_{x}(S^{\delta}_{x,x_{j}})&=P_{x}\left(\left\{v\in\mathbb{R}^{q}\;\middle|\; |\langle v-m_{j},\tilde{u}_{j}\rangle|>\frac{\varepsilon\sigma^{2}}{\Delta_{j}}\right\}\right)\\[0.75ex]
    &=\lambda\left(\left\{u\in[0,1]\;\middle|\;|\langle V(u)-m_{j},\tilde{u}_{j}\rangle|>\frac{\varepsilon\sigma^{2}}{\Delta_{j}}\right\}\right)\\[0.75ex]
    &=\lambda\left(\left\{|\langle V-m_{j},\tilde{u}_{j}\rangle|>\frac{\varepsilon\sigma^{2}}{\Delta_{j}}\right\}\right)\\[0.75ex]
    &=\lambda\left(\left\{\left|\frac{\Delta_{j}}{2}+\sigma N_{j}\right|>\frac{\varepsilon\sigma^{2}}{\Delta_{j}}\right\}\right).
\end{align*}
Using this representation, consider now the summand in \eqref{eq:disjointFamily},
\begin{align*}
    P_{x}\left(S^{\delta}_{x,x_{i}}\cap\bigcap_{j=1}^{i-1}\left(S^{\delta}_{x,x_{j}}\right)^{c}\right)
    =&\lambda\left(\left\{\left|\frac{\Delta_{i}}{2}+\sigma N_{i}\right|>\frac{\varepsilon\sigma^{2}}{\Delta_{i}}\right\}\cap\bigcap_{j=1}^{i-1}\left\{\left|\frac{\Delta_{j}}{2}+\sigma N_{j}\right|\leq\frac{\varepsilon\sigma^{2}}{\Delta_{j}}\right\}\right)\\[0.75ex]
    =&\lambda\left(\left\{\frac{\Delta_{i}}{2}+\sigma N_{i}<-\frac{\varepsilon\sigma^{2}}{\Delta_{i}}\right\}\cap\bigcap_{j=1}^{i-1}\left\{-\frac{\varepsilon\sigma^{2}}{\Delta_{j}}\leq\frac{\Delta_{j}}{2}+\sigma N_{j}\leq\frac{\varepsilon\sigma^{2}}{\Delta_{j}}\right\}\right)\\
    & \quad+\lambda\left(\left\{\frac{\Delta_{i}}{2}+\sigma N_{i}>\frac{\varepsilon\sigma^{2}}{\Delta_{i}}\right\}\cap\bigcap_{j=1}^{i-1}\left\{-\frac{\varepsilon\sigma^{2}}{\Delta_{j}}\leq\frac{\Delta_{j}}{2}+\sigma N_{j}\leq\frac{\varepsilon\sigma^{2}}{\Delta_{j}}\right\}\right)\\[0.75ex]
    =&\lambda\left(\left\{N_{i}<-\frac{\varepsilon\sigma}{\Delta_{i}}-\frac{\Delta_{i}}{2\sigma}\right\}\cap\bigcap_{j=1}^{i-1}\left\{-\frac{\varepsilon\sigma}{\Delta_{j}}-\frac{\Delta_{j}}{2\sigma}\leq N_{j}\leq\frac{\varepsilon\sigma}{\Delta_{j}}-\frac{\Delta_{j}}{2\sigma}\right\}\right)\\
    &  \quad +\lambda\left(\left\{N_{i}>\frac{\varepsilon\sigma}{\Delta_{i}}-\frac{\Delta_{i}}{2\sigma}\right\}\cap\bigcap_{j=1}^{i-1}\left\{-\frac{\varepsilon\sigma}{\Delta_{j}}-\frac{\Delta_{j}}{2\sigma}\leq N_{j}\leq\frac{\varepsilon\sigma}{\Delta_{j}}-\frac{\Delta_{j}}{2\sigma}\right\}\right).
\end{align*}
For $j=1,2,\ldots,p$, define
\begin{align*}
    t_{j}^{(1)}\coloneqq-\frac{\varepsilon\sigma}{\Delta_{j}}-\frac{\Delta_{j}}{2\sigma}, \quad t_{j}^{(2)}\coloneqq\frac{\varepsilon\sigma}{\Delta_{j}}-\frac{\Delta_{j}}{2\sigma}.
\end{align*}
We then obtain the simplified expression
\begin{equation}\label{eq:simplifiedExpression}
\begin{split}
    P_{x}\left(S^{\delta}_{x,x_{i}}\cap\bigcap_{j=1}^{i-1}\left(S^{\delta}_{x,x_{j}}\right)^{c}\right)&=\lambda\left(\left\{N_{i}<t_{i}^{(1)}\right\}\cap\bigcap_{j=1}^{i-1}\left\{t_{j}^{(1)}\leq N_{j}\leq t_{j}^{(2)}\right\}\right)\\
    &\quad +\lambda\left(\left\{N_{i}>t^{(2)}_{i}\right\}\cap\bigcap_{j=1}^{i-1}\left\{t_{j}^{(1)}\leq N_{j}\leq t_{j}^{(2)}\right\}\right).
\end{split}
\end{equation}
Each of the two summands in \eqref{eq:simplifiedExpression} is a probability under the joint distribution of $(N_{1},N_{2},\ldots,N_{i})$, which is a zero-mean multivariate normal distribution. Evaluating these probabilities therefore reduces to determining the corresponding covariance matrix. For $i,j=1,2,\ldots,p$, define 
\begin{align*} 
\rho_{i,j} &\coloneqq \mathrm{Cov}\left[N_i,N_j\right] 
= \frac{1}{\sigma^2}\mathrm{Cov}\left[W_i,W_j\right] 
= \frac{1}{\sigma^2}\mathrm{Cov}\left[\langle V,\tilde u_i\rangle,\langle V,\tilde u_j\rangle\right] 
\stackrel{\dagger}{=} \langle \tilde u_i,\tilde u_j\rangle. 
\end{align*}
where $\dagger$ follows from $\langle V,\tilde{u}_{i}\rangle=\tilde{u}_{i}^{\top}V$ and $\langle V,\tilde{u}_{j}\rangle=\tilde{u}_{j}^{\top}V$. Let $\Sigma_{i}^{(1)}$ denote the $i\times i$ covariance matrix of $(N_1, \ldots, N_i)$, i.e.,
\begin{align*}
    \Sigma_{i}^{(1)}\coloneqq\begin{bmatrix}
        1 & \rho_{1,2} & \cdots & \rho_{1,i} \\
        \rho_{2,1} & 1 & \cdots & \rho_{2,i} \\
        \vdots & \vdots & \ddots & \vdots \\
        \rho_{i,1} & \rho_{i,2} & \cdots & 1
    \end{bmatrix}.
\end{align*}
Now the first term on the right hand side in \eqref{eq:simplifiedExpression} can be rewritten:
\begin{align*}
    \lambda\left(\left\{N_{i}<t_i^{(1)}\right\}\cap\bigcap_{j=1}^{i-1}\left\{t_{j}^{(1)}\leq N_{j}\leq t_{j}^{(2)}\right\}\right)
      =\Phi\left(\left[t_{1}^{(1)},t_{1}^{(2)}\right],\left[t_{2}^{(1)},t_{2}^{(2)}\right],\ldots,\left[t_{i-1}^{(1)},t_{i-1}^{(2)}\right],t_i^{(1)};\Sigma_{i}^{(1)}\right),
\end{align*}
where $\Phi(\cdot,\dots,\cdot;\Sigma_{i}^{(1)})$ denotes the cumulative distribution function of the zero-mean multivariate normal distribution with covariance matrix $\Sigma_{i}^{(1)}$. For the second term of the right hand side in \eqref{eq:simplifiedExpression}, we exploit that \begin{align*}
    \lambda\left(\left\{N_{i}>t_{i}^{(2)}\right\}\right)=\lambda\left(\left\{N_{i}<-t_{i}^{(2)}\right\}\right),
\end{align*}
and obtain a similar expression:
\begin{align*}
    &\lambda\left(\left\{N_{i}>t^{(2)}_{i}\right\}\cap\bigcap_{j=1}^{i-1}\left\{t_{j}^{(1)}\leq N_{j}\leq t_{j}^{(2)}\right\}\right)\\
    &\quad=\Phi\left(\left[t_{1}^{(1)},t_{1}^{(2)}\right],\left[t_{2}^{(1)},t_{2}^{(2)}\right],\ldots,\left[t_{i-1}^{(1)},t_{i-1}^{(2)}\right],-t_i^{(2)};\Sigma_{i}^{(2)}\right),
\end{align*}
where
\begin{align*}
    \Sigma_{i}^{(2)}\coloneqq\begin{bmatrix}
        1 & \rho_{1,2} & \cdots & \rho_{1,i-1} & -\rho_{1,i} \\
        \rho_{2,1} & 1 & \cdots & \rho_{2,i-1} & -\rho_{2,i} \\
        \vdots & \vdots & \ddots & \vdots & \vdots \\
        \rho_{i-1,1} & \rho_{i-1,2} & \cdots & 1 & -\rho_{i-1,i}\\
       - \rho_{i,1} & -\rho_{i,2} & \cdots & -\rho_{i,i-1} & 1
    \end{bmatrix}.
\end{align*}
Substituting these expressions into \eqref{eq:simplifiedExpression} yields 
\begin{align*}
    P_{x}\left(S^{\delta}_{x,x_{i}}\cap\bigcap_{j=1}^{i-1}\left(S^{\delta}_{x,x_{j}}\right)^{c}\right)&=\sum_{j=1}^{2}\Phi\left(\left[t_{1}^{(1)},t_{1}^{(2)}\right],\left[t_{2}^{(1)},t_{2}^{(2)}\right],\ldots,\left[t_{i-1}^{(1)},t_{i-1}^{(2)}\right],(-1)^{j-1}t_i^{(j)};\Sigma_{i}^{(j)}\right).
\end{align*}
Combining this with \eqref{eq:disjointFamily}, we obtain the following exact expression for the probability that the Gaussian mechanism produces an output in the $S_x^\delta$:
\begin{align}\label{eq:exactGaussianFormula}
    P_{x}(S^{\delta}_{x})=\sum_{i=1}^{p}\sum_{j=1}^{2}\Phi\left(\left[t_{1}^{(1)},t_{1}^{(2)}\right],\left[t_{2}^{(1)},t_{2}^{(2)}\right],\ldots,\left[t_{i-1}^{(1)},t_{i-1}^{(2)}\right],(-1)^{j-1}t_i^{(j)};\Sigma_{i}^{(j)}\right).
\end{align}
Again, since $S_x^\delta$ is a conservative failure region, this expression provides only an upper limit to the failure probability of the posterior-to-prior ratio bounds \eqref{eq:pDPBounds} and \eqref{eq:ADPBounds}.

%% file: empirical.tex
Although $P_x(S_x^\delta)$ provides a conservative upper limit on the probability that $\mathscr{R}$ violates the bounds in \eqref{eq:pDPBounds} and \eqref{eq:ADPBounds}, the tightness of this limit is not immediately clear. For the Gaussian mechanism, \eqref{eq:exactGaussianFormula} gives an exact expression for $P_x(S_x^\delta)$, which we compare with the empirical failure probability obtained via Monte Carlo simulation. This comparison allows us to assess how conservative the limit is in practice. We focus on the failure probability under probabilistic differential privacy, as the corresponding analysis under approximate differential privacy differs only in the choice of $(\varepsilon,\delta)$-parameters by the results in \autoref{sec:relationship}.

To carry out this comparison, we first calibrate the Gaussian mechanism to satisfy a prescribed $(\varepsilon,\delta)$-probabilistic differential privacy guarantee. This requires selecting the noise parameter $\sigma$ such that the probability mass of the bad region is approximately $\delta$.
Recall from \eqref{eq:exactBadSize} that for any $x,x'\in\mathcal{X}$ with $d(x,x')=1$, 
\begin{align*} 
    P_{x}(S^{\delta}_{x,x'})= \Phi\left(-\frac{\varepsilon\sigma}{\Delta}+\frac{\Delta}{2\sigma}\right) +\Phi\left(-\frac{\varepsilon\sigma}{\Delta}-\frac{\Delta}{2\sigma}\right). 
\end{align*} 
Hence, to obtain exactly $(\varepsilon,\delta)$-probabilistic differential privacy, we choose $\sigma>0$ by minimising
\begin{align}\label{eq:lagrange}
 \mathcal{L}(\sigma)\coloneqq\left|\Phi\left(-\frac{\varepsilon\sigma}{\Delta}+\frac{\Delta}{2\sigma}\right)+\Phi\left(-\frac{\varepsilon\sigma}{\Delta}-\frac{\Delta}{2\sigma}\right)-\delta\right|.
\end{align}
This calibration is similar to the approach of \cite{balle2018improving}, except that their minimisation yields the optimal noise parameter for approximate differential privacy.

\subsubsection*{Simulation setup}

To assess the tightness of the theoretical bound, we consider the following two-level simulation setup. In the outer loop, we simulate $N_{1}$ datasets of size $m$ from a population $P =\{1,2,\ldots,n\}$ of size $n>m$. For each sampled dataset, we calculate the theoretical upper bound $P_x(S_x^\delta)$. In the inner loop, we independently apply the Gaussian mechanism with probabilistic differential privacy $N_2$ times to each dataset, taking $f(x)$ to be the vector of attribute values of the individuals in $x$. This generates a total of $N_1N_2$ noisy datasets. For each realisation, we calculate the posterior-to-prior ratio and record whether it violates the bounds in \eqref{eq:pDPBounds}. The empirical failure probability, denoted $\overline{F}$, is then estimated as the proportion of the $N_1N_2$ posterior-to-prior ratios outside these bounds. Since the theoretical upper limit depends on the specific dataset realisation, we compare its mean across the $N_1$ datasets, denoted $\overline{T}$, with $\overline{F}$. 

For each dataset $x=\{x_1,\ldots,x_m\}$ sampled in the outer loop from the population $P$, we treat $x^-=x \setminus \{x_m\}$ as auxiliary information available to the adversary. The adversary therefore knows all but one member of the dataset and seeks to infer the identity of the remaining individual from among the individuals in $P$ not contained in $x^-$. Accordingly, the candidate set is \begin{align*} \mathcal C=P\setminus x^-, \end{align*} and the true remaining individual is $x_m\in\mathcal C$. After observing the noisy output of the Gaussian mechanism, the adversary updates the prior inclusion probability of $x_m$. The resulting posterior-to-prior ratio is used to determine whether the bounds in \eqref{eq:pDPBounds} are violated. The following procedure outlines the simulation framework used to compute $\overline{T}$ and $\overline{F}$.

\begin{algorithm}[H]
\caption*{\textbf{Procedure:} Monte Carlo estimation of failure probability}
\begin{algorithmic}[1]
\State \textbf{Input:} Privacy parameters: $\varepsilon > 0$, $\delta \in ]0,1]$; simulation parameters $n,m,N_{1},N_{2}\in\mathbb{N}$
\Statex \rule[0.5ex]{\linewidth}{0.2pt}
\State Set population $P\gets\{1,2,\ldots,n\}$
\State Compute $\sigma\gets\arg\min_{\tilde{\sigma}>0}\mathcal{L}(\tilde{\sigma})$
\State Initialise $N_1\times N_2$ matrix $F$
\State Initialise size $N_{1}$ vector $T$
\State
\For{$j=1,2,\ldots,N_{1}$}
    \State Sample attributes $\phi_i^{(j)}\sim\mathrm{Unif}[0,1]$ independently for all $i\in P$
    \State Sample dataset $x\subset P$ uniformly without replacement with $|x| = m$
    \State Enumerate the elements of $x$ as $x=\{x_1,\ldots,x_m\}$
    \State Define the set of auxiliary information $x^{-}=x\setminus \{x_{m}\}$
    \State Compute $T_{j}\gets P_{x}(S^{\delta}_{x})$ using \eqref{eq:exactGaussianFormula}
    \State
    \For{$k=1,2,\ldots,N_{2}$}
        \State Generate noisy dataset $t$, where $t_\ell \gets \phi^{(j)}_{x_\ell}+N(0,\sigma^2)$ for $\ell=1,\ldots,m$
        \State Compute $\mathscr{R}(t)$ according to \eqref{eq:sim_ratio} for $I_{i}$, where $i=x_{m}$, and $\mathcal{A}=\{\omega\in\Omega\mid x^{-}\subset X(\omega)\}$
        \If{$\mathscr{R}(t)$ satisfies \eqref{eq:pDPBounds}}
            \State $F_{j,k}\gets 0$
        \Else
            \State $F_{j,k}\gets 1$
        \EndIf
    \EndFor
\EndFor
\State
\State \Return  $\overline{T}\coloneqq\frac{1}{N_{1}}\sum_{j=1}^{N_{1}}T_{j}$;\quad $\overline{F}\coloneqq\frac{1}{N_{1}N_{2}}\sum_{j=1}^{N_{1}}\sum_{k=1}^{N_{2}}F_{j,k}$
\end{algorithmic}
\end{algorithm}

The posterior-to-prior ratio in Line 16 of the simulation procedure is obtained by first deriving the posterior probability in \eqref{eq:postpriorratio}.  Applying Bayes' rule yields
\begin{align*}
    P_{X\mid M}(I_{i}\mid t,\mathcal{A})&\propto P_{M\mid X}(t\mid I_{i}\cap\mathcal{A})\mathbb{P}(I_{i}\cap\mathcal{A})\\
    &\propto P_{M\mid X}(t\mid I_{i}\cap\mathcal{A})\mathbb{P}(I_{i}\mid\mathcal{A})\\
    &\propto P_{M\mid X}(t\mid I_{i}\cap\mathcal{A}).
\end{align*}
The last proportionality follows from the fact that the prior inclusion probability $\mathbb{P}(I_{i}\mid\mathcal{A})$ is uniform over the set of candidates $\mathcal{C}$. In particular, $\mathbb{P}(I_{i}\mid\mathcal{A})=1/|\mathcal{C}|$. Since the Gaussian mechanism at Line 15 adds independent $N(0,\sigma^2)$ noise to each attribute, the conditional density of $t$ given a candidate $i\in\mathcal C$ is 
\begin{align}\label{eq:stemsfrom}
    P_{M\mid X}(t\mid I_{i}\cap\mathcal{A})\propto\exp\left(-\frac{1}{2\sigma^2}\left(\left(t_{m}-\phi^{(j)}_i\right)^2+\sum_{\ell=1}^{m-1}\left(t_{\ell}-\phi^{(j)}_{x_{\ell}}\right)^2\right)\right)\eqqcolon\gamma(i).  
\end{align}
Since \(\gamma(i)\) is proportional to the posterior probability \(P_{X\mid M}(I_i\mid t,\mathcal A)\) for all \(i\in\mathcal C\), normalising over \(\mathcal C\) gives \[ P_{X\mid M}(I_i\mid t,\mathcal A) = \frac{\gamma(i)} {\sum_{q\in\mathcal C}\gamma(q)}. \]
Finally, dividing by the prior, we get the posterior-to-prior ratio
\begin{align}\label{eq:sim_ratio}
    \mathscr{R}(t)=\frac{P_{X\mid M}(I_{i}\mid t,\mathcal{A})}{\mathbb{P}(I_{i}\mid\mathcal{A})}=\frac{\gamma(i)}{\sum_{q\in\mathcal{C}}\gamma(q)}|\mathcal{C}|.
\end{align}

\subsubsection*{Properties of the Monte Carlo estimators}

Before presenting the simulation results, we investigate the theoretical properties of the Monte Carlo estimator. Let $Z_j$ denote all randomness generated in the $j$th iteration of the outer loop, including the attribute vector $\phi^{(j)}$ and the sampled dataset $x^{(j)}$. We assume that $Z_1,\ldots,Z_{N_1}$ are i.i.d. For $j=1,\ldots,N_1$ and $k=1,\ldots,N_2$, let $F_{j,k}$ indicate whether the posterior-to-prior ratio obtained in the $k$th inner-loop iteration violates the bounds in \eqref{eq:pDPBounds}. Conditional on $Z_j$, we assume that $F_{j,1},\ldots,F_{j,N_2}$ are independent and satisfy 
\begin{align*} 
    F_{j,k}\mid Z_j \sim \operatorname{Bernoulli}\!\left(\rho(Z_j)\right), 
\end{align*} 
where $\rho(Z_j)$ denotes the conditional failure probability associated with the $j$th outer-loop realisation. The corresponding unconditional failure probability is 
\begin{align*} 
    \rho\coloneqq\mathbb E\!\left[\rho(Z_j)\right]. 
\end{align*} 
Define 
\begin{align*} 
    \overline F_j \coloneqq \frac{1}{N_2}\sum_{k=1}^{N_2}F_{j,k} \qquad\text{and}\qquad \overline F \coloneqq \frac{1}{N_1}\sum_{j=1}^{N_1}\overline F_j. 
\end{align*} 
By the law of total expectation, 
\begin{align*} 
    \mathbb E[\overline F_j] &= \mathbb E\left[ \mathbb E[\overline F_j\mid Z_j] \right]\\ &= \mathbb E\left[ \frac{1}{N_2} \sum_{k=1}^{N_2} \mathbb E[F_{j,k}\mid Z_j] \right]\\ &= \mathbb E[\rho(Z_j)]\\ &= \rho. 
\end{align*} 
Consequently, 
\begin{align*} 
    \mathbb E[\overline F] = \frac{1}{N_1} \sum_{j=1}^{N_1}\mathbb E[\overline F_j] = \rho. 
\end{align*} 
Hence, both $\overline F_j$ and $\overline F$ are unbiased estimators of $\rho$. For a fixed outer-loop realisation, however, the conditional law of large numbers gives 
\begin{align*} 
    \overline F_j = \frac{1}{N_2}\sum_{k=1}^{N_2}F_{j,k} \to \rho(Z_j) \qquad\text{a.s. as }N_2\to\infty. 
\end{align*} 
Thus, increasing $N_2$ makes $\overline F_j$ consistent for the realisation-specific failure probability $\rho(Z_j)$, but not for the unconditional failure probability $\rho$, since 
\begin{align*} 
    \operatorname{Var}[\rho(Z_j)]>0. 
\end{align*} 
By contrast, averaging over independent outer-loop realisations yields a consistent estimator of $\rho$. Since $\overline F_1,\ldots,\overline F_{N_1}$ are i.i.d., 
\begin{align*} 
    \operatorname{Var}[\overline F] &= \operatorname{Var}\left[ \frac{1}{N_1}\sum_{j=1}^{N_1}\overline F_j \right]\\ &= \frac{\operatorname{Var}[\overline F_j]}{N_1} \to 0 \qquad\text{as }N_1\to\infty. 
\end{align*} 
Together with the unbiasedness of $\overline F$, this implies that $\overline F$ converges to $\rho$ in probability. Therefore, $\overline F$ is a consistent estimator of the unconditional failure probability as $N_1\to\infty$.

\subsubsection*{Results}

\textit{Source code and plots are available at \href{https://github.com/janreiter793/ratio_bounds_failure_rate}{\textcolor{blue}{https://github.com/janreiter793/ratio\_bounds\_failure\_rate}}.}

We investigate the behaviour of the Monte Carlo estimators through parameter sweeps over $\varepsilon$, $m$, and $\delta$, while fixing $n = 100$, $N_{1}=1,000$, and $N_{2}=1,000,000$. We have limited the analysis to a population size of $n=100$ due to high computational demands for larger $n$; however, the points of the analysis should still apply to $n>100$. The ranges are given in the following table
\begin{table}[H]
    \centering
    \begin{tabular}{cl}
        \toprule
        Variable & Range \\
        \midrule
        $\varepsilon$ & $[0.2,2.0]$ \\
        $m$ & $\{10,\ldots,99\}$ \\
        $\delta$ & $[10^{-1.1},1]$ \\
        \bottomrule
    \end{tabular}
\end{table}
The resulting estimates are displayed in \autoref{fig:epsilon_sweep}, \autoref{fig:m_sweep}, and \autoref{fig:delta_sweep}.

\begin{figure}[H]
    \centering
    \resizebox{0.75\textwidth}{!}{
        \input{figures/epsilon_sweep_plot_v3}
    }
    \caption{Empirical estimates and standard deviations of the failure probability $\overline{F}$ (blue) and the corresponding mean theoretical upper limit $\overline{T}$ (red) for $40$ equidistant values of $\varepsilon$ in the interval $[0.2,2.0]$. Results are shown for $n=100$, $m=90$, and $\delta=10^{-1.1}$. Estimates are based on $N_1 = 1,000$ attribute samples and $N_2 = 1,000,000$ simulations per attribute sample.
    }\label{fig:epsilon_sweep}
\end{figure}

As shown in \autoref{fig:epsilon_sweep}, the average theoretical upper limit $\overline{T}$ consistently exceeds the empirical failure probability $\overline{F}$ by approximately two orders of magnitude throughout the interval $\varepsilon\in[0.2,2.0]$. While $\overline{F}$ decreases noticeably as $\varepsilon$ increases, $\overline{T}$ remains visibly constant around $1.5\times10^{-2}$. The standard deviations of $\overline{F}$ and $\overline{T}$ are within the ranges of $[5.82\cdot 10^{-12},1.51\cdot10^{-11}]$ and $[5.16\cdot10^{-4},6.22\cdot10^{-4}]$, respectively, and are therefore negligible on the scale of \autoref{fig:epsilon_sweep}. 
Averaged over the chosen values of $\varepsilon$, the mean difference between $\overline{T}$ and $\overline{F}$ is $1.48\times10^{-2}$, while the mean value of $\overline{T}$ is $1.49\times10^{-2}$. This suggests that the theoretical upper bound is highly conservative in practice, with the empirical failure probability contributing only a small fraction of the bound across the entire range of $\varepsilon$ considered.

Although $\overline{T}$ is not visibly decreasing in \autoref{fig:epsilon_sweep}, a linear regression of $\overline{T}$ on $\varepsilon$ yields a negative slope of $-1.83\cdot10^{-3}$ ($p$-value $=5.22\cdot10^{-10}$). Hence, both $\overline{F}$ and $\overline{T}$ exhibit a decreasing trend over the considered values of $\varepsilon$. The decrease in $\overline{F}$ indicates that the posterior-to-prior ratio $\mathscr{R}$ satisfies the bounds \eqref{eq:pDPBounds} more often as the privacy budget increases. This is consistent with the widening of the admissible interval as $\varepsilon$ increases. Indeed, considering \eqref{eq:pDPBounds}, we see that the bounds expand with $\varepsilon$:
\begin{align*}
    \left[\frac{1}{\mathbb{P}(I_{i}\mid\mathcal{A})+(1-\mathbb{P}(I_{i}\mid\mathcal{A}))e^{\varepsilon}}, \,
    \frac{1}{\mathbb{P}(I_{i}\mid\mathcal{A})+(1-\mathbb{P}(I_{i}\mid\mathcal{A}))e^{-\varepsilon}}\right]\to\left[0,\frac{1}{\mathbb{P}(I_{i}\mid\mathcal{A})}\right],
\end{align*}
as $\varepsilon\to\infty$. Importantly, the limiting interval remains finite. At the same time, increasing $\varepsilon$ weakens the privacy guarantee and results in less perturbed data. Consequently, the posterior distribution may deviate more substantially from the prior distribution, potentially increasing the variability of the ratio $\mathscr{R}$. The asymptotic behaviour of $\overline{F}$ thus depends on the interaction between the widening of the admissible interval and the changing distribution of $\mathscr{R}$. Although the simulations cover only a finite range of $\varepsilon$ and therefore do not determine this asymptotic behaviour, we conjecture that $\overline{F}$ may converge to a strictly positive limit or eventually exhibit qualitatively different behaviour if the increasing variability of $\mathscr{R}$ outweighs the widening of the admissible interval.

\autoref{fig:m_sweep} shows $\overline{T}$ and $\overline{F}$ for observed dataset sizes from $m=10$ to $m = 99$. We limit the range to $m=99$, since $m=n$ would result in the candidate set $\mathcal{C}$ containing only $x_m$. Consequently, $\mathbb{P}(I_i\mid\mathcal{A})=1$ for $i\in\mathcal{C}$ and, likewise, $P_{X\mid M}(I_i\mid t,\mathcal{A})=1$, resulting in $\mathscr{R}(t)=1$. Since $\mathscr{R}(t)=1$ always satisfies \eqref{eq:pDPBounds}, the empirical failure probability would be zero.

There are two key observations from \autoref{fig:m_sweep}. First, for most values of $m$ there is a substantial gap between $\overline{F}$ and $\overline{T}$. As $m$ increases towards $99$, $\overline{T}$ tends to decrease while $\overline{F}$ tends to increase, with the two quantities coinciding at $m=99$. The downward trend of $\overline{T}$ is explained by the decreasing number of neighbours of $x$. Specifically, since
\begin{align*}
    S^{\delta}_{x}=\bigcup_{x'\in X(I^{c}_{i}\cap\mathcal{A})}S^{\delta}_{x,x'},
\end{align*}
the set $S^{\delta}_{x}$ is formed from a union of fewer and fewer sets $S^{\delta}_{x,x'}$ as $m$ approaches $n-1$. As a result, the size of $S^{\delta}_{x}$ decreases, causing $P_{x}(S^{\delta}_{x})$ to decrease, which is reflected in the downward trend of $\overline{T}$.
The second observation concerns the shape of this downward trend. In particular, $\overline{T}$ steadily decreases until approximately $m=70$, after which the decline becomes markedly steeper, resulting in the hockey-stick shape of the curve. As $m$ increases, the number of neighbours to the observed dataset $x$ decreases, reducing the number of bad regions contributing to $S^{\delta}_{x}$. The non-linear shape of $\overline{T}$ further suggests that the regions $S^{\delta}_{x, x'}$ are overlapping or differing in size.
Although these observations are based on $n=100$, we hypothesise that the trends hold for any population size $n$. Further, the standard deviations for $\overline{T}$ and $\overline{F}$ in \autoref{fig:m_sweep} are within $[2.96\cdot10^{-4},7.44\cdot10^{-4}]$ and $[6.03\cdot10^{-12},5.64\cdot10^{-11}]$, respectively, and are therefore negligible in the scale of the figure. 

\begin{figure}[htbp]
    \centering
    \resizebox{0.75\textwidth}{!}{
        \input{figures/m_sweep_plot_v3}
    }
    \caption{Empirical estimates and standard deviations of the failure probability $\overline{F}$ (blue) and the corresponding mean theoretical upper limit $\overline{T}$ (red) for $40$ equally distributed integer values of $m$ from $m=10$ to $m=99$. Results are shown for $n=100$, $\varepsilon=1$, and $\delta=10^{-1.1}$. Estimates are based on $N_1=1,000$ attribute samples and $N_2=1,000,000$ simulations per attribute sample.
    }\label{fig:m_sweep}
\end{figure}

\autoref{fig:delta_sweep} shows $\overline{T}$ and $\overline{F}$ across $40$ equidistant $\delta\in[10^{-1.1},1]$. The estimates remain separated throughout the interval, with $\overline{T}>\overline{F}$ for all considered values of $\delta$, and both approach $1$ as $\delta$ tends to $1$. 
The monotonic increase in $\overline{F}$ is due to the role of $\delta$ in determining the size of the bad regions. Recall that the Gaussian noise level is calibrated by minimising \eqref{eq:lagrange}, thereby enforcing $P_x(S^\delta_{x,x'}) \approx \delta$. Consequently, larger values of $\delta$ correspond to larger bad regions, which in turn increase both $P_x(S^\delta_x)$ and the mean theoretical upper limit $\overline{T}$. While $\overline{T}$ increases steadily across the entire range, most of the increase in $\overline{F}$ occurs in the interval $\delta\in[0.95,1]$. Specifically, $\overline{F}$ grows from $18\%$ to $100\%$ over $\delta\in[0.95,1]$, compared with an increase from $0.007\%$ to $18\%$  over $\delta\in[10^{-1.1},0.95]$. In practice, $\delta$ is typically chosen considerably smaller than $10^{-1.1}$ \citep{dwork2006our}. Consequently, practical applications of posterior-to-prior ratio bounds are unlikely to operate in the regime of the rapid failure probability increase observed for $\delta$ close to one. We have not obtained estimates for $\delta<10^{-1.1}$, as the computational requirements of the corresponding rare-event simulations would be prohibitive.
The standard deviations of $\overline{T}$ and $\overline{F}$ are within $[1.11\cdot10^{-19},4.33\cdot10^{-3}]$ and $[8.44\cdot10^{-12},4.25\cdot10^{-10}]$, respectively, and are therefore negligible on the scale of \autoref{fig:delta_sweep}. 

\begin{figure}[H]
    \centering
    \resizebox{0.75\textwidth}{!}{
        \input{figures/delta_sweep_plot_v3}
    }
    \caption{Empirical estimates and standard deviations of the failure probability $\overline{F}$ (blue) and the corresponding mean theoretical upper limits $\overline{T}$ (red) for $40$ equidistant values of $\delta$ in the interval $[10^{-1.1},1]$. Results are shown for $n=100$, $m=90$, and $\varepsilon=1$. Estimates are based on $N_1=1,000$ attribute samples and $N_2=1,000,000$ simulations per attribute sample.
    }\label{fig:delta_sweep}
\end{figure}

In summary, Monte Carlo simulations across a range of values of $\varepsilon$, $m$, and $\delta$ indicate that the theoretical upper limit $P_{x}(S^{\delta}_{x})$ is highly conservative compared to the true failure probability. In realistic settings, the privacy budget $\varepsilon$ is likely to be moderate, the population size $n$ substantially larger than the sample size $m$, and $\delta$ negligibly small. Consquently, the true failure probability may be several orders of magnitude below its theoretical upper limit. The principal implication is that if privacy parameters can be chosen such the posterior-to-prior ratio is sufficiently constrained by \eqref{eq:pDPBounds}, then $P_{x}(S^{\delta}_{x})$ provides a useful conservative limit on the probability that the privacy guarantee is violated.

%% file: figures/epsilon_sweep_plot_v3.tex
\begin{tikzpicture}[x=1pt,y=1pt]
\definecolor{fillColor}{RGB}{255,255,255}
\path[use as bounding box,fill=fillColor,fill opacity=0.00] (0,0) rectangle (433.62,289.08);
\begin{scope}
\path[clip] (  0.00,  0.00) rectangle (433.62,289.08);
\definecolor{fillColor}{RGB}{255,255,255}

\path[fill=fillColor] (  0.00,  0.00) rectangle (433.62,289.08);
\end{scope}
\begin{scope}
\path[clip] ( 36.84, 30.69) rectangle (342.64,283.58);
\definecolor{drawColor}{gray}{0.92}

\path[draw=drawColor,line width= 0.3pt,line join=round] ( 36.84, 40.12) --
	(342.64, 40.12);

\path[draw=drawColor,line width= 0.3pt,line join=round] ( 36.84,124.25) --
	(342.64,124.25);

\path[draw=drawColor,line width= 0.3pt,line join=round] ( 36.84,208.38) --
	(342.64,208.38);

\path[draw=drawColor,line width= 0.3pt,line join=round] ( 66.19, 30.69) --
	( 66.19,283.58);

\path[draw=drawColor,line width= 0.3pt,line join=round] ( 97.08, 30.69) --
	( 97.08,283.58);

\path[draw=drawColor,line width= 0.3pt,line join=round] (127.97, 30.69) --
	(127.97,283.58);

\path[draw=drawColor,line width= 0.3pt,line join=round] (158.85, 30.69) --
	(158.85,283.58);

\path[draw=drawColor,line width= 0.3pt,line join=round] (189.74, 30.69) --
	(189.74,283.58);

\path[draw=drawColor,line width= 0.3pt,line join=round] (220.63, 30.69) --
	(220.63,283.58);

\path[draw=drawColor,line width= 0.3pt,line join=round] (251.52, 30.69) --
	(251.52,283.58);

\path[draw=drawColor,line width= 0.3pt,line join=round] (282.41, 30.69) --
	(282.41,283.58);

\path[draw=drawColor,line width= 0.3pt,line join=round] (313.30, 30.69) --
	(313.30,283.58);

\path[draw=drawColor,line width= 0.6pt,line join=round] ( 36.84, 82.19) --
	(342.64, 82.19);

\path[draw=drawColor,line width= 0.6pt,line join=round] ( 36.84,166.31) --
	(342.64,166.31);

\path[draw=drawColor,line width= 0.6pt,line join=round] ( 36.84,250.44) --
	(342.64,250.44);

\path[draw=drawColor,line width= 0.6pt,line join=round] ( 50.74, 30.69) --
	( 50.74,283.58);

\path[draw=drawColor,line width= 0.6pt,line join=round] ( 81.63, 30.69) --
	( 81.63,283.58);

\path[draw=drawColor,line width= 0.6pt,line join=round] (112.52, 30.69) --
	(112.52,283.58);

\path[draw=drawColor,line width= 0.6pt,line join=round] (143.41, 30.69) --
	(143.41,283.58);

\path[draw=drawColor,line width= 0.6pt,line join=round] (174.30, 30.69) --
	(174.30,283.58);

\path[draw=drawColor,line width= 0.6pt,line join=round] (205.19, 30.69) --
	(205.19,283.58);

\path[draw=drawColor,line width= 0.6pt,line join=round] (236.08, 30.69) --
	(236.08,283.58);

\path[draw=drawColor,line width= 0.6pt,line join=round] (266.96, 30.69) --
	(266.96,283.58);

\path[draw=drawColor,line width= 0.6pt,line join=round] (297.85, 30.69) --
	(297.85,283.58);

\path[draw=drawColor,line width= 0.6pt,line join=round] (328.74, 30.69) --
	(328.74,283.58);
\definecolor{fillColor}{RGB}{70,130,180}

\path[fill=fillColor,fill opacity=0.20] ( 50.74,101.57) --
	( 57.87, 95.22) --
	( 65.00,112.52) --
	( 72.13, 98.88) --
	( 79.26, 95.35) --
	( 86.38, 97.68) --
	( 93.51,102.93) --
	(100.64, 98.76) --
	(107.77, 99.04) --
	(114.90,101.71) --
	(122.03, 94.14) --
	(129.15, 94.00) --
	(136.28, 98.27) --
	(143.41, 86.21) --
	(150.54, 93.96) --
	(157.67, 85.30) --
	(164.79, 94.07) --
	(171.92, 96.29) --
	(179.05, 81.08) --
	(186.18, 87.81) --
	(193.31, 84.53) --
	(200.43, 83.18) --
	(207.56, 80.71) --
	(214.69, 74.20) --
	(221.82, 72.71) --
	(228.95, 80.53) --
	(236.08, 71.55) --
	(243.20, 82.07) --
	(250.33, 77.10) --
	(257.46, 66.27) --
	(264.59, 74.96) --
	(271.72, 58.16) --
	(278.84, 55.61) --
	(285.97, 57.06) --
	(293.10, 56.59) --
	(300.23, 64.72) --
	(307.36, 58.49) --
	(314.48, 59.37) --
	(321.61, 61.62) --
	(328.74, 42.96) --
	(328.74, 42.18) --
	(321.61, 61.02) --
	(314.48, 58.74) --
	(307.36, 57.86) --
	(300.23, 64.14) --
	(293.10, 55.94) --
	(285.97, 56.42) --
	(278.84, 54.95) --
	(271.72, 57.52) --
	(264.59, 74.46) --
	(257.46, 65.71) --
	(250.33, 76.61) --
	(243.20, 81.62) --
	(236.08, 71.02) --
	(228.95, 80.06) --
	(221.82, 72.19) --
	(214.69, 73.69) --
	(207.56, 80.25) --
	(200.43, 82.73) --
	(193.31, 84.09) --
	(186.18, 87.38) --
	(179.05, 80.62) --
	(171.92, 95.91) --
	(164.79, 93.69) --
	(157.67, 84.87) --
	(150.54, 93.58) --
	(143.41, 85.78) --
	(136.28, 97.90) --
	(129.15, 93.61) --
	(122.03, 93.75) --
	(114.90,101.36) --
	(107.77, 98.68) --
	(100.64, 98.40) --
	( 93.51,102.59) --
	( 86.38, 97.31) --
	( 79.26, 94.97) --
	( 72.13, 98.52) --
	( 65.00,112.22) --
	( 57.87, 94.84) --
	( 50.74,101.23) --
	cycle;

\path[] ( 50.74,101.57) --
	( 57.87, 95.22) --
	( 65.00,112.52) --
	( 72.13, 98.88) --
	( 79.26, 95.35) --
	( 86.38, 97.68) --
	( 93.51,102.93) --
	(100.64, 98.76) --
	(107.77, 99.04) --
	(114.90,101.71) --
	(122.03, 94.14) --
	(129.15, 94.00) --
	(136.28, 98.27) --
	(143.41, 86.21) --
	(150.54, 93.96) --
	(157.67, 85.30) --
	(164.79, 94.07) --
	(171.92, 96.29) --
	(179.05, 81.08) --
	(186.18, 87.81) --
	(193.31, 84.53) --
	(200.43, 83.18) --
	(207.56, 80.71) --
	(214.69, 74.20) --
	(221.82, 72.71) --
	(228.95, 80.53) --
	(236.08, 71.55) --
	(243.20, 82.07) --
	(250.33, 77.10) --
	(257.46, 66.27) --
	(264.59, 74.96) --
	(271.72, 58.16) --
	(278.84, 55.61) --
	(285.97, 57.06) --
	(293.10, 56.59) --
	(300.23, 64.72) --
	(307.36, 58.49) --
	(314.48, 59.37) --
	(321.61, 61.62) --
	(328.74, 42.96);

\path[] (328.74, 42.18) --
	(321.61, 61.02) --
	(314.48, 58.74) --
	(307.36, 57.86) --
	(300.23, 64.14) --
	(293.10, 55.94) --
	(285.97, 56.42) --
	(278.84, 54.95) --
	(271.72, 57.52) --
	(264.59, 74.46) --
	(257.46, 65.71) --
	(250.33, 76.61) --
	(243.20, 81.62) --
	(236.08, 71.02) --
	(228.95, 80.06) --
	(221.82, 72.19) --
	(214.69, 73.69) --
	(207.56, 80.25) --
	(200.43, 82.73) --
	(193.31, 84.09) --
	(186.18, 87.38) --
	(179.05, 80.62) --
	(171.92, 95.91) --
	(164.79, 93.69) --
	(157.67, 84.87) --
	(150.54, 93.58) --
	(143.41, 85.78) --
	(136.28, 97.90) --
	(129.15, 93.61) --
	(122.03, 93.75) --
	(114.90,101.36) --
	(107.77, 98.68) --
	(100.64, 98.40) --
	( 93.51,102.59) --
	( 86.38, 97.31) --
	( 79.26, 94.97) --
	( 72.13, 98.52) --
	( 65.00,112.22) --
	( 57.87, 94.84) --
	( 50.74,101.23);
\definecolor{fillColor}{RGB}{238,44,44}

\path[fill=fillColor,fill opacity=0.15] ( 50.74,268.58) --
	( 57.87,268.84) --
	( 65.00,270.93) --
	( 72.13,272.08) --
	( 79.26,265.34) --
	( 86.38,268.55) --
	( 93.51,271.80) --
	(100.64,265.26) --
	(107.77,267.52) --
	(114.90,268.36) --
	(122.03,267.87) --
	(129.15,268.93) --
	(136.28,268.14) --
	(143.41,269.29) --
	(150.54,269.71) --
	(157.67,265.19) --
	(164.79,269.15) --
	(171.92,270.53) --
	(179.05,267.98) --
	(186.18,264.96) --
	(193.31,266.69) --
	(200.43,267.03) --
	(207.56,266.27) --
	(214.69,265.16) --
	(221.82,264.58) --
	(228.95,267.04) --
	(236.08,266.66) --
	(243.20,266.63) --
	(250.33,264.04) --
	(257.46,262.34) --
	(264.59,264.86) --
	(271.72,263.79) --
	(278.84,260.83) --
	(285.97,264.88) --
	(293.10,263.41) --
	(300.23,263.94) --
	(307.36,263.12) --
	(314.48,262.49) --
	(321.61,261.91) --
	(328.74,259.64) --
	(328.74,256.59) --
	(321.61,258.92) --
	(314.48,259.44) --
	(307.36,260.09) --
	(300.23,260.96) --
	(293.10,260.48) --
	(285.97,261.93) --
	(278.84,257.65) --
	(271.72,260.83) --
	(264.59,261.99) --
	(257.46,259.35) --
	(250.33,261.09) --
	(243.20,263.74) --
	(236.08,263.74) --
	(228.95,264.25) --
	(221.82,261.68) --
	(214.69,262.36) --
	(207.56,263.42) --
	(200.43,264.20) --
	(193.31,263.87) --
	(186.18,262.03) --
	(179.05,265.19) --
	(171.92,267.80) --
	(164.79,266.47) --
	(157.67,262.30) --
	(150.54,267.01) --
	(143.41,266.61) --
	(136.28,265.35) --
	(129.15,266.19) --
	(122.03,265.09) --
	(114.90,265.64) --
	(107.77,264.74) --
	(100.64,262.47) --
	( 93.51,269.24) --
	( 86.38,265.82) --
	( 79.26,262.46) --
	( 72.13,269.53) --
	( 65.00,268.35) --
	( 57.87,266.13) --
	( 50.74,265.82) --
	cycle;

\path[] ( 50.74,268.58) --
	( 57.87,268.84) --
	( 65.00,270.93) --
	( 72.13,272.08) --
	( 79.26,265.34) --
	( 86.38,268.55) --
	( 93.51,271.80) --
	(100.64,265.26) --
	(107.77,267.52) --
	(114.90,268.36) --
	(122.03,267.87) --
	(129.15,268.93) --
	(136.28,268.14) --
	(143.41,269.29) --
	(150.54,269.71) --
	(157.67,265.19) --
	(164.79,269.15) --
	(171.92,270.53) --
	(179.05,267.98) --
	(186.18,264.96) --
	(193.31,266.69) --
	(200.43,267.03) --
	(207.56,266.27) --
	(214.69,265.16) --
	(221.82,264.58) --
	(228.95,267.04) --
	(236.08,266.66) --
	(243.20,266.63) --
	(250.33,264.04) --
	(257.46,262.34) --
	(264.59,264.86) --
	(271.72,263.79) --
	(278.84,260.83) --
	(285.97,264.88) --
	(293.10,263.41) --
	(300.23,263.94) --
	(307.36,263.12) --
	(314.48,262.49) --
	(321.61,261.91) --
	(328.74,259.64);

\path[] (328.74,256.59) --
	(321.61,258.92) --
	(314.48,259.44) --
	(307.36,260.09) --
	(300.23,260.96) --
	(293.10,260.48) --
	(285.97,261.93) --
	(278.84,257.65) --
	(271.72,260.83) --
	(264.59,261.99) --
	(257.46,259.35) --
	(250.33,261.09) --
	(243.20,263.74) --
	(236.08,263.74) --
	(228.95,264.25) --
	(221.82,261.68) --
	(214.69,262.36) --
	(207.56,263.42) --
	(200.43,264.20) --
	(193.31,263.87) --
	(186.18,262.03) --
	(179.05,265.19) --
	(171.92,267.80) --
	(164.79,266.47) --
	(157.67,262.30) --
	(150.54,267.01) --
	(143.41,266.61) --
	(136.28,265.35) --
	(129.15,266.19) --
	(122.03,265.09) --
	(114.90,265.64) --
	(107.77,264.74) --
	(100.64,262.47) --
	( 93.51,269.24) --
	( 86.38,265.82) --
	( 79.26,262.46) --
	( 72.13,269.53) --
	( 65.00,268.35) --
	( 57.87,266.13) --
	( 50.74,265.82);
\definecolor{drawColor}{RGB}{70,130,180}

\path[draw=drawColor,line width= 1.4pt,line join=round] ( 50.74,101.40) --
	( 57.87, 95.03) --
	( 65.00,112.37) --
	( 72.13, 98.70) --
	( 79.26, 95.16) --
	( 86.38, 97.50) --
	( 93.51,102.76) --
	(100.64, 98.58) --
	(107.77, 98.86) --
	(114.90,101.54) --
	(122.03, 93.95) --
	(129.15, 93.80) --
	(136.28, 98.08) --
	(143.41, 85.99) --
	(150.54, 93.77) --
	(157.67, 85.09) --
	(164.79, 93.88) --
	(171.92, 96.10) --
	(179.05, 80.85) --
	(186.18, 87.60) --
	(193.31, 84.31) --
	(200.43, 82.95) --
	(207.56, 80.48) --
	(214.69, 73.95) --
	(221.82, 72.45) --
	(228.95, 80.30) --
	(236.08, 71.28) --
	(243.20, 81.85) --
	(250.33, 76.86) --
	(257.46, 65.99) --
	(264.59, 74.71) --
	(271.72, 57.84) --
	(278.84, 55.28) --
	(285.97, 56.74) --
	(293.10, 56.27) --
	(300.23, 64.43) --
	(307.36, 58.17) --
	(314.48, 59.06) --
	(321.61, 61.32) --
	(328.74, 42.57);
\definecolor{drawColor}{RGB}{238,44,44}

\path[draw=drawColor,line width= 1.4pt,line join=round] ( 50.74,267.22) --
	( 57.87,267.51) --
	( 65.00,269.66) --
	( 72.13,270.83) --
	( 79.26,263.93) --
	( 86.38,267.21) --
	( 93.51,270.54) --
	(100.64,263.89) --
	(107.77,266.16) --
	(114.90,267.03) --
	(122.03,266.50) --
	(129.15,267.59) --
	(136.28,266.77) --
	(143.41,267.97) --
	(150.54,268.39) --
	(157.67,263.77) --
	(164.79,267.83) --
	(171.92,269.19) --
	(179.05,266.61) --
	(186.18,263.52) --
	(193.31,265.31) --
	(200.43,265.64) --
	(207.56,264.87) --
	(214.69,263.79) --
	(221.82,263.16) --
	(228.95,265.67) --
	(236.08,265.23) --
	(243.20,265.22) --
	(250.33,262.59) --
	(257.46,260.88) --
	(264.59,263.45) --
	(271.72,262.34) --
	(278.84,259.27) --
	(285.97,263.43) --
	(293.10,261.97) --
	(300.23,262.48) --
	(307.36,261.64) --
	(314.48,261.00) --
	(321.61,260.45) --
	(328.74,258.14);
\end{scope}
\begin{scope}
\path[clip] (  0.00,  0.00) rectangle (433.62,289.08);
\definecolor{drawColor}{gray}{0.30}

\node[text=drawColor,anchor=base west,inner sep=0pt, outer sep=0pt, scale=  0.88] at ( 17.96, 78.41) {10};

\node[text=drawColor,anchor=base west,inner sep=0pt, outer sep=0pt, scale=  0.62] at ( 26.76, 82.01) {-};

\node[text=drawColor,anchor=base west,inner sep=0pt, outer sep=0pt, scale=  0.62] at ( 28.82, 82.01) {4};

\node[text=drawColor,anchor=base west,inner sep=0pt, outer sep=0pt, scale=  0.88] at ( 17.96,162.54) {10};

\node[text=drawColor,anchor=base west,inner sep=0pt, outer sep=0pt, scale=  0.62] at ( 26.76,166.14) {-};

\node[text=drawColor,anchor=base west,inner sep=0pt, outer sep=0pt, scale=  0.62] at ( 28.82,166.14) {3};

\node[text=drawColor,anchor=base west,inner sep=0pt, outer sep=0pt, scale=  0.88] at ( 17.96,246.67) {10};

\node[text=drawColor,anchor=base west,inner sep=0pt, outer sep=0pt, scale=  0.62] at ( 26.76,250.27) {-};

\node[text=drawColor,anchor=base west,inner sep=0pt, outer sep=0pt, scale=  0.62] at ( 28.82,250.27) {2};
\end{scope}
\begin{scope}
\path[clip] (  0.00,  0.00) rectangle (433.62,289.08);
\definecolor{drawColor}{gray}{0.30}

\node[text=drawColor,anchor=base,inner sep=0pt, outer sep=0pt, scale=  0.88] at ( 50.74, 19.68) {0.2};

\node[text=drawColor,anchor=base,inner sep=0pt, outer sep=0pt, scale=  0.88] at ( 81.63, 19.68) {0.4};

\node[text=drawColor,anchor=base,inner sep=0pt, outer sep=0pt, scale=  0.88] at (112.52, 19.68) {0.6};

\node[text=drawColor,anchor=base,inner sep=0pt, outer sep=0pt, scale=  0.88] at (143.41, 19.68) {0.8};

\node[text=drawColor,anchor=base,inner sep=0pt, outer sep=0pt, scale=  0.88] at (174.30, 19.68) {1.0};

\node[text=drawColor,anchor=base,inner sep=0pt, outer sep=0pt, scale=  0.88] at (205.19, 19.68) {1.2};

\node[text=drawColor,anchor=base,inner sep=0pt, outer sep=0pt, scale=  0.88] at (236.08, 19.68) {1.4};

\node[text=drawColor,anchor=base,inner sep=0pt, outer sep=0pt, scale=  0.88] at (266.96, 19.68) {1.6};

\node[text=drawColor,anchor=base,inner sep=0pt, outer sep=0pt, scale=  0.88] at (297.85, 19.68) {1.8};

\node[text=drawColor,anchor=base,inner sep=0pt, outer sep=0pt, scale=  0.88] at (328.74, 19.68) {2.0};
\end{scope}
\begin{scope}
\path[clip] (  0.00,  0.00) rectangle (433.62,289.08);
\definecolor{drawColor}{RGB}{0,0,0}

\node[text=drawColor,anchor=base,inner sep=0pt, outer sep=0pt, scale=  1.10] at (189.74,  7.64) {$\varepsilon$};
\end{scope}
\begin{scope}
\path[clip] (  0.00,  0.00) rectangle (433.62,289.08);
\definecolor{drawColor}{RGB}{0,0,0}

\node[text=drawColor,rotate= 90.00,anchor=base,inner sep=0pt, outer sep=0pt, scale=  1.10] at ( 13.08,157.13) {Value};
\end{scope}
\begin{scope}
\path[clip] (  0.00,  0.00) rectangle (433.62,289.08);
\definecolor{drawColor}{RGB}{70,130,180}

\path[draw=drawColor,line width= 1.4pt,line join=round] (360.59,156.75) -- (372.15,156.75);
\end{scope}
\begin{scope}
\path[clip] (  0.00,  0.00) rectangle (433.62,289.08);
\definecolor{drawColor}{RGB}{238,44,44}

\path[draw=drawColor,line width= 1.4pt,line join=round] (360.59,142.30) -- (372.15,142.30);
\end{scope}
\begin{scope}
\path[clip] (  0.00,  0.00) rectangle (433.62,289.08);
\definecolor{drawColor}{RGB}{0,0,0}

\node[text=drawColor,anchor=base west,inner sep=0pt, outer sep=0pt, scale=  0.88] at (379.09,153.72) {$\overline{F}$};
\end{scope}
\begin{scope}
\path[clip] (  0.00,  0.00) rectangle (433.62,289.08);
\definecolor{drawColor}{RGB}{0,0,0}

\node[text=drawColor,anchor=base west,inner sep=0pt, outer sep=0pt, scale=  0.88] at (379.09,139.27) {$\overline{T}$};
\end{scope}
\end{tikzpicture}

%% file: figures/m_sweep_plot_v3.tex
\begin{tikzpicture}[x=1pt,y=1pt]
\definecolor{fillColor}{RGB}{255,255,255}
\path[use as bounding box,fill=fillColor,fill opacity=0.00] (0,0) rectangle (433.62,289.08);
\begin{scope}
\path[clip] (  0.00,  0.00) rectangle (433.62,289.08);
\definecolor{fillColor}{RGB}{255,255,255}

\path[fill=fillColor] (  0.00,  0.00) rectangle (433.62,289.08);
\end{scope}
\begin{scope}
\path[clip] ( 36.84, 30.69) rectangle (342.64,283.58);
\definecolor{drawColor}{gray}{0.92}

\path[draw=drawColor,line width= 0.3pt,line join=round] ( 36.84, 70.92) --
	(342.64, 70.92);

\path[draw=drawColor,line width= 0.3pt,line join=round] ( 36.84,128.39) --
	(342.64,128.39);

\path[draw=drawColor,line width= 0.3pt,line join=round] ( 36.84,185.87) --
	(342.64,185.87);

\path[draw=drawColor,line width= 0.3pt,line join=round] ( 36.84,243.35) --
	(342.64,243.35);

\path[draw=drawColor,line width= 0.3pt,line join=round] ( 66.19, 30.69) --
	( 66.19,283.58);

\path[draw=drawColor,line width= 0.3pt,line join=round] ( 97.08, 30.69) --
	( 97.08,283.58);

\path[draw=drawColor,line width= 0.3pt,line join=round] (127.97, 30.69) --
	(127.97,283.58);

\path[draw=drawColor,line width= 0.3pt,line join=round] (158.85, 30.69) --
	(158.85,283.58);

\path[draw=drawColor,line width= 0.3pt,line join=round] (189.74, 30.69) --
	(189.74,283.58);

\path[draw=drawColor,line width= 0.3pt,line join=round] (220.63, 30.69) --
	(220.63,283.58);

\path[draw=drawColor,line width= 0.3pt,line join=round] (251.52, 30.69) --
	(251.52,283.58);

\path[draw=drawColor,line width= 0.3pt,line join=round] (282.41, 30.69) --
	(282.41,283.58);

\path[draw=drawColor,line width= 0.3pt,line join=round] (313.30, 30.69) --
	(313.30,283.58);

\path[draw=drawColor,line width= 0.6pt,line join=round] ( 36.84, 42.18) --
	(342.64, 42.18);

\path[draw=drawColor,line width= 0.6pt,line join=round] ( 36.84, 99.66) --
	(342.64, 99.66);

\path[draw=drawColor,line width= 0.6pt,line join=round] ( 36.84,157.13) --
	(342.64,157.13);

\path[draw=drawColor,line width= 0.6pt,line join=round] ( 36.84,214.61) --
	(342.64,214.61);

\path[draw=drawColor,line width= 0.6pt,line join=round] ( 36.84,272.08) --
	(342.64,272.08);

\path[draw=drawColor,line width= 0.6pt,line join=round] ( 50.74, 30.69) --
	( 50.74,283.58);

\path[draw=drawColor,line width= 0.6pt,line join=round] ( 81.63, 30.69) --
	( 81.63,283.58);

\path[draw=drawColor,line width= 0.6pt,line join=round] (112.52, 30.69) --
	(112.52,283.58);

\path[draw=drawColor,line width= 0.6pt,line join=round] (143.41, 30.69) --
	(143.41,283.58);

\path[draw=drawColor,line width= 0.6pt,line join=round] (174.30, 30.69) --
	(174.30,283.58);

\path[draw=drawColor,line width= 0.6pt,line join=round] (205.19, 30.69) --
	(205.19,283.58);

\path[draw=drawColor,line width= 0.6pt,line join=round] (236.08, 30.69) --
	(236.08,283.58);

\path[draw=drawColor,line width= 0.6pt,line join=round] (266.96, 30.69) --
	(266.96,283.58);

\path[draw=drawColor,line width= 0.6pt,line join=round] (297.85, 30.69) --
	(297.85,283.58);

\path[draw=drawColor,line width= 0.6pt,line join=round] (328.74, 30.69) --
	(328.74,283.58);
\definecolor{fillColor}{RGB}{70,130,180}

\path[fill=fillColor,fill opacity=0.20] ( 50.74, 76.65) --
	( 56.92, 79.16) --
	( 66.19, 77.34) --
	( 72.37, 80.79) --
	( 78.54, 76.19) --
	( 84.72, 77.40) --
	( 93.99, 75.45) --
	(100.17, 76.95) --
	(106.34, 78.80) --
	(115.61, 74.69) --
	(121.79, 77.42) --
	(127.97, 75.74) --
	(134.14, 75.14) --
	(143.41, 78.02) --
	(149.59, 78.61) --
	(155.77, 81.81) --
	(165.03, 79.11) --
	(171.21, 75.38) --
	(177.39, 82.43) --
	(183.56, 83.41) --
	(192.83, 78.43) --
	(199.01, 79.37) --
	(205.19, 83.82) --
	(211.36, 83.77) --
	(220.63, 82.07) --
	(226.81, 83.15) --
	(232.99, 84.85) --
	(242.25, 86.91) --
	(248.43, 87.29) --
	(254.61, 84.42) --
	(260.79, 89.21) --
	(270.05, 89.64) --
	(276.23, 86.58) --
	(282.41, 95.52) --
	(291.67, 96.49) --
	(297.85,111.24) --
	(304.03,111.65) --
	(310.21,119.50) --
	(319.47,142.17) --
	(325.65,186.08) --
	(325.65,186.03) --
	(319.47,142.04) --
	(310.21,119.29) --
	(304.03,111.41) --
	(297.85,111.00) --
	(291.67, 96.16) --
	(282.41, 95.18) --
	(276.23, 86.18) --
	(270.05, 89.26) --
	(260.79, 88.83) --
	(254.61, 84.00) --
	(248.43, 86.89) --
	(242.25, 86.51) --
	(232.99, 84.43) --
	(226.81, 82.72) --
	(220.63, 81.62) --
	(211.36, 83.35) --
	(205.19, 83.39) --
	(199.01, 78.90) --
	(192.83, 77.95) --
	(183.56, 82.98) --
	(177.39, 81.99) --
	(171.21, 74.88) --
	(165.03, 78.64) --
	(155.77, 81.36) --
	(149.59, 78.13) --
	(143.41, 77.54) --
	(134.14, 74.63) --
	(127.97, 75.23) --
	(121.79, 76.94) --
	(115.61, 74.17) --
	(106.34, 78.33) --
	(100.17, 76.46) --
	( 93.99, 74.94) --
	( 84.72, 76.91) --
	( 78.54, 75.70) --
	( 72.37, 80.34) --
	( 66.19, 76.85) --
	( 56.92, 78.69) --
	( 50.74, 76.15) --
	cycle;

\path[] ( 50.74, 76.65) --
	( 56.92, 79.16) --
	( 66.19, 77.34) --
	( 72.37, 80.79) --
	( 78.54, 76.19) --
	( 84.72, 77.40) --
	( 93.99, 75.45) --
	(100.17, 76.95) --
	(106.34, 78.80) --
	(115.61, 74.69) --
	(121.79, 77.42) --
	(127.97, 75.74) --
	(134.14, 75.14) --
	(143.41, 78.02) --
	(149.59, 78.61) --
	(155.77, 81.81) --
	(165.03, 79.11) --
	(171.21, 75.38) --
	(177.39, 82.43) --
	(183.56, 83.41) --
	(192.83, 78.43) --
	(199.01, 79.37) --
	(205.19, 83.82) --
	(211.36, 83.77) --
	(220.63, 82.07) --
	(226.81, 83.15) --
	(232.99, 84.85) --
	(242.25, 86.91) --
	(248.43, 87.29) --
	(254.61, 84.42) --
	(260.79, 89.21) --
	(270.05, 89.64) --
	(276.23, 86.58) --
	(282.41, 95.52) --
	(291.67, 96.49) --
	(297.85,111.24) --
	(304.03,111.65) --
	(310.21,119.50) --
	(319.47,142.17) --
	(325.65,186.08);

\path[] (325.65,186.03) --
	(319.47,142.04) --
	(310.21,119.29) --
	(304.03,111.41) --
	(297.85,111.00) --
	(291.67, 96.16) --
	(282.41, 95.18) --
	(276.23, 86.18) --
	(270.05, 89.26) --
	(260.79, 88.83) --
	(254.61, 84.00) --
	(248.43, 86.89) --
	(242.25, 86.51) --
	(232.99, 84.43) --
	(226.81, 82.72) --
	(220.63, 81.62) --
	(211.36, 83.35) --
	(205.19, 83.39) --
	(199.01, 78.90) --
	(192.83, 77.95) --
	(183.56, 82.98) --
	(177.39, 81.99) --
	(171.21, 74.88) --
	(165.03, 78.64) --
	(155.77, 81.36) --
	(149.59, 78.13) --
	(143.41, 77.54) --
	(134.14, 74.63) --
	(127.97, 75.23) --
	(121.79, 76.94) --
	(115.61, 74.17) --
	(106.34, 78.33) --
	(100.17, 76.46) --
	( 93.99, 74.94) --
	( 84.72, 76.91) --
	( 78.54, 75.70) --
	( 72.37, 80.34) --
	( 66.19, 76.85) --
	( 56.92, 78.69) --
	( 50.74, 76.15);
\definecolor{fillColor}{RGB}{238,44,44}

\path[fill=fillColor,fill opacity=0.15] ( 50.74,237.00) --
	( 56.92,237.80) --
	( 66.19,236.98) --
	( 72.37,238.00) --
	( 78.54,236.55) --
	( 84.72,237.15) --
	( 93.99,235.66) --
	(100.17,236.93) --
	(106.34,237.02) --
	(115.61,235.43) --
	(121.79,235.95) --
	(127.97,235.49) --
	(134.14,236.13) --
	(143.41,235.64) --
	(149.59,235.65) --
	(155.77,236.41) --
	(165.03,235.87) --
	(171.21,234.58) --
	(177.39,235.53) --
	(183.56,235.33) --
	(192.83,235.10) --
	(199.01,235.51) --
	(205.19,236.53) --
	(211.36,235.40) --
	(220.63,235.13) --
	(226.81,234.79) --
	(232.99,234.90) --
	(242.25,233.53) --
	(248.43,233.33) --
	(254.61,233.02) --
	(260.79,231.73) --
	(270.05,230.82) --
	(276.23,229.50) --
	(282.41,229.53) --
	(291.67,228.70) --
	(297.85,225.27) --
	(304.03,223.26) --
	(310.21,220.24) --
	(319.47,207.09) --
	(325.65,188.27) --
	(325.65,183.61) --
	(319.47,204.13) --
	(310.21,218.00) --
	(304.03,221.22) --
	(297.85,223.32) --
	(291.67,226.92) --
	(282.41,227.77) --
	(276.23,227.73) --
	(270.05,229.08) --
	(260.79,230.02) --
	(254.61,231.41) --
	(248.43,231.74) --
	(242.25,231.92) --
	(232.99,233.38) --
	(226.81,233.24) --
	(220.63,233.58) --
	(211.36,233.82) --
	(205.19,235.02) --
	(199.01,233.99) --
	(192.83,233.51) --
	(183.56,233.74) --
	(177.39,233.97) --
	(171.21,233.01) --
	(165.03,234.32) --
	(155.77,234.87) --
	(149.59,234.10) --
	(143.41,234.11) --
	(134.14,234.63) --
	(127.97,233.95) --
	(121.79,234.41) --
	(115.61,233.88) --
	(106.34,235.51) --
	(100.17,235.42) --
	( 93.99,234.10) --
	( 84.72,235.64) --
	( 78.54,235.00) --
	( 72.37,236.50) --
	( 66.19,235.47) --
	( 56.92,236.32) --
	( 50.74,235.53) --
	cycle;

\path[] ( 50.74,237.00) --
	( 56.92,237.80) --
	( 66.19,236.98) --
	( 72.37,238.00) --
	( 78.54,236.55) --
	( 84.72,237.15) --
	( 93.99,235.66) --
	(100.17,236.93) --
	(106.34,237.02) --
	(115.61,235.43) --
	(121.79,235.95) --
	(127.97,235.49) --
	(134.14,236.13) --
	(143.41,235.64) --
	(149.59,235.65) --
	(155.77,236.41) --
	(165.03,235.87) --
	(171.21,234.58) --
	(177.39,235.53) --
	(183.56,235.33) --
	(192.83,235.10) --
	(199.01,235.51) --
	(205.19,236.53) --
	(211.36,235.40) --
	(220.63,235.13) --
	(226.81,234.79) --
	(232.99,234.90) --
	(242.25,233.53) --
	(248.43,233.33) --
	(254.61,233.02) --
	(260.79,231.73) --
	(270.05,230.82) --
	(276.23,229.50) --
	(282.41,229.53) --
	(291.67,228.70) --
	(297.85,225.27) --
	(304.03,223.26) --
	(310.21,220.24) --
	(319.47,207.09) --
	(325.65,188.27);

\path[] (325.65,183.61) --
	(319.47,204.13) --
	(310.21,218.00) --
	(304.03,221.22) --
	(297.85,223.32) --
	(291.67,226.92) --
	(282.41,227.77) --
	(276.23,227.73) --
	(270.05,229.08) --
	(260.79,230.02) --
	(254.61,231.41) --
	(248.43,231.74) --
	(242.25,231.92) --
	(232.99,233.38) --
	(226.81,233.24) --
	(220.63,233.58) --
	(211.36,233.82) --
	(205.19,235.02) --
	(199.01,233.99) --
	(192.83,233.51) --
	(183.56,233.74) --
	(177.39,233.97) --
	(171.21,233.01) --
	(165.03,234.32) --
	(155.77,234.87) --
	(149.59,234.10) --
	(143.41,234.11) --
	(134.14,234.63) --
	(127.97,233.95) --
	(121.79,234.41) --
	(115.61,233.88) --
	(106.34,235.51) --
	(100.17,235.42) --
	( 93.99,234.10) --
	( 84.72,235.64) --
	( 78.54,235.00) --
	( 72.37,236.50) --
	( 66.19,235.47) --
	( 56.92,236.32) --
	( 50.74,235.53);
\definecolor{drawColor}{RGB}{70,130,180}

\path[draw=drawColor,line width= 1.4pt,line join=round] ( 50.74, 76.40) --
	( 56.92, 78.92) --
	( 66.19, 77.10) --
	( 72.37, 80.57) --
	( 78.54, 75.95) --
	( 84.72, 77.15) --
	( 93.99, 75.20) --
	(100.17, 76.71) --
	(106.34, 78.57) --
	(115.61, 74.43) --
	(121.79, 77.18) --
	(127.97, 75.49) --
	(134.14, 74.89) --
	(143.41, 77.78) --
	(149.59, 78.37) --
	(155.77, 81.59) --
	(165.03, 78.88) --
	(171.21, 75.13) --
	(177.39, 82.21) --
	(183.56, 83.19) --
	(192.83, 78.19) --
	(199.01, 79.14) --
	(205.19, 83.61) --
	(211.36, 83.56) --
	(220.63, 81.85) --
	(226.81, 82.93) --
	(232.99, 84.64) --
	(242.25, 86.71) --
	(248.43, 87.09) --
	(254.61, 84.21) --
	(260.79, 89.02) --
	(270.05, 89.45) --
	(276.23, 86.38) --
	(282.41, 95.35) --
	(291.67, 96.33) --
	(297.85,111.12) --
	(304.03,111.53) --
	(310.21,119.40) --
	(319.47,142.10) --
	(325.65,186.06);
\definecolor{drawColor}{RGB}{238,44,44}

\path[draw=drawColor,line width= 1.4pt,line join=round] ( 50.74,236.27) --
	( 56.92,237.07) --
	( 66.19,236.24) --
	( 72.37,237.26) --
	( 78.54,235.79) --
	( 84.72,236.41) --
	( 93.99,234.89) --
	(100.17,236.18) --
	(106.34,236.28) --
	(115.61,234.66) --
	(121.79,235.19) --
	(127.97,234.73) --
	(134.14,235.39) --
	(143.41,234.89) --
	(149.59,234.89) --
	(155.77,235.65) --
	(165.03,235.11) --
	(171.21,233.81) --
	(177.39,234.76) --
	(183.56,234.55) --
	(192.83,234.32) --
	(199.01,234.77) --
	(205.19,235.79) --
	(211.36,234.62) --
	(220.63,234.36) --
	(226.81,234.03) --
	(232.99,234.15) --
	(242.25,232.74) --
	(248.43,232.55) --
	(254.61,232.22) --
	(260.79,230.89) --
	(270.05,229.96) --
	(276.23,228.63) --
	(282.41,228.67) --
	(291.67,227.83) --
	(297.85,224.31) --
	(304.03,222.26) --
	(310.21,219.14) --
	(319.47,205.65) --
	(325.65,186.05);
\end{scope}
\begin{scope}
\path[clip] (  0.00,  0.00) rectangle (433.62,289.08);
\definecolor{drawColor}{gray}{0.30}

\node[text=drawColor,anchor=base west,inner sep=0pt, outer sep=0pt, scale=  0.88] at ( 17.96, 38.41) {10};

\node[text=drawColor,anchor=base west,inner sep=0pt, outer sep=0pt, scale=  0.62] at ( 26.76, 42.00) {-};

\node[text=drawColor,anchor=base west,inner sep=0pt, outer sep=0pt, scale=  0.62] at ( 28.82, 42.00) {5};

\node[text=drawColor,anchor=base west,inner sep=0pt, outer sep=0pt, scale=  0.88] at ( 17.96, 95.88) {10};

\node[text=drawColor,anchor=base west,inner sep=0pt, outer sep=0pt, scale=  0.62] at ( 26.76, 99.48) {-};

\node[text=drawColor,anchor=base west,inner sep=0pt, outer sep=0pt, scale=  0.62] at ( 28.82, 99.48) {4};

\node[text=drawColor,anchor=base west,inner sep=0pt, outer sep=0pt, scale=  0.88] at ( 17.96,153.36) {10};

\node[text=drawColor,anchor=base west,inner sep=0pt, outer sep=0pt, scale=  0.62] at ( 26.76,156.96) {-};

\node[text=drawColor,anchor=base west,inner sep=0pt, outer sep=0pt, scale=  0.62] at ( 28.82,156.96) {3};

\node[text=drawColor,anchor=base west,inner sep=0pt, outer sep=0pt, scale=  0.88] at ( 17.96,210.83) {10};

\node[text=drawColor,anchor=base west,inner sep=0pt, outer sep=0pt, scale=  0.62] at ( 26.76,214.43) {-};

\node[text=drawColor,anchor=base west,inner sep=0pt, outer sep=0pt, scale=  0.62] at ( 28.82,214.43) {2};

\node[text=drawColor,anchor=base west,inner sep=0pt, outer sep=0pt, scale=  0.88] at ( 17.96,268.31) {10};

\node[text=drawColor,anchor=base west,inner sep=0pt, outer sep=0pt, scale=  0.62] at ( 26.76,271.91) {-};

\node[text=drawColor,anchor=base west,inner sep=0pt, outer sep=0pt, scale=  0.62] at ( 28.82,271.91) {1};
\end{scope}
\begin{scope}
\path[clip] (  0.00,  0.00) rectangle (433.62,289.08);
\definecolor{drawColor}{gray}{0.30}

\node[text=drawColor,anchor=base,inner sep=0pt, outer sep=0pt, scale=  0.88] at ( 50.74, 19.68) {10};

\node[text=drawColor,anchor=base,inner sep=0pt, outer sep=0pt, scale=  0.88] at ( 81.63, 19.68) {20};

\node[text=drawColor,anchor=base,inner sep=0pt, outer sep=0pt, scale=  0.88] at (112.52, 19.68) {30};

\node[text=drawColor,anchor=base,inner sep=0pt, outer sep=0pt, scale=  0.88] at (143.41, 19.68) {40};

\node[text=drawColor,anchor=base,inner sep=0pt, outer sep=0pt, scale=  0.88] at (174.30, 19.68) {50};

\node[text=drawColor,anchor=base,inner sep=0pt, outer sep=0pt, scale=  0.88] at (205.19, 19.68) {60};

\node[text=drawColor,anchor=base,inner sep=0pt, outer sep=0pt, scale=  0.88] at (236.08, 19.68) {70};

\node[text=drawColor,anchor=base,inner sep=0pt, outer sep=0pt, scale=  0.88] at (266.96, 19.68) {80};

\node[text=drawColor,anchor=base,inner sep=0pt, outer sep=0pt, scale=  0.88] at (297.85, 19.68) {90};

\node[text=drawColor,anchor=base,inner sep=0pt, outer sep=0pt, scale=  0.88] at (328.74, 19.68) {100};
\end{scope}
\begin{scope}
\path[clip] (  0.00,  0.00) rectangle (433.62,289.08);
\definecolor{drawColor}{RGB}{0,0,0}

\node[text=drawColor,anchor=base,inner sep=0pt, outer sep=0pt, scale=  1.10] at (189.74,  7.64) {$m$};
\end{scope}
\begin{scope}
\path[clip] (  0.00,  0.00) rectangle (433.62,289.08);
\definecolor{drawColor}{RGB}{0,0,0}

\node[text=drawColor,rotate= 90.00,anchor=base,inner sep=0pt, outer sep=0pt, scale=  1.10] at ( 13.08,157.13) {Value};
\end{scope}
\begin{scope}
\path[clip] (  0.00,  0.00) rectangle (433.62,289.08);
\definecolor{drawColor}{RGB}{70,130,180}

\path[draw=drawColor,line width= 1.4pt,line join=round] (360.59,156.75) -- (372.15,156.75);
\end{scope}
\begin{scope}
\path[clip] (  0.00,  0.00) rectangle (433.62,289.08);
\definecolor{drawColor}{RGB}{238,44,44}

\path[draw=drawColor,line width= 1.4pt,line join=round] (360.59,142.30) -- (372.15,142.30);
\end{scope}
\begin{scope}
\path[clip] (  0.00,  0.00) rectangle (433.62,289.08);
\definecolor{drawColor}{RGB}{0,0,0}

\node[text=drawColor,anchor=base west,inner sep=0pt, outer sep=0pt, scale=  0.88] at (379.09,153.72) {$\overline{F}$};
\end{scope}
\begin{scope}
\path[clip] (  0.00,  0.00) rectangle (433.62,289.08);
\definecolor{drawColor}{RGB}{0,0,0}

\node[text=drawColor,anchor=base west,inner sep=0pt, outer sep=0pt, scale=  0.88] at (379.09,139.27) {$\overline{T}$};
\end{scope}
\end{tikzpicture}

%% file: figures/delta_sweep_plot_v3.tex
\begin{tikzpicture}[x=1pt,y=1pt]
\definecolor{fillColor}{RGB}{255,255,255}
\path[use as bounding box,fill=fillColor,fill opacity=0.00] (0,0) rectangle (433.62,289.08);
\begin{scope}
\path[clip] (  0.00,  0.00) rectangle (433.62,289.08);
\definecolor{fillColor}{RGB}{255,255,255}

\path[fill=fillColor] (  0.00,  0.00) rectangle (433.62,289.08);
\end{scope}
\begin{scope}
\path[clip] ( 36.84, 30.69) rectangle (342.64,283.58);
\definecolor{drawColor}{gray}{0.92}

\path[draw=drawColor,line width= 0.3pt,line join=round] ( 36.84, 65.17) --
	(342.64, 65.17);

\path[draw=drawColor,line width= 0.3pt,line join=round] ( 36.84,111.15) --
	(342.64,111.15);

\path[draw=drawColor,line width= 0.3pt,line join=round] ( 36.84,157.13) --
	(342.64,157.13);

\path[draw=drawColor,line width= 0.3pt,line join=round] ( 36.84,203.11) --
	(342.64,203.11);

\path[draw=drawColor,line width= 0.3pt,line join=round] ( 36.84,249.09) --
	(342.64,249.09);

\path[draw=drawColor,line width= 0.3pt,line join=round] ( 52.13, 30.69) --
	( 52.13,283.58);

\path[draw=drawColor,line width= 0.3pt,line join=round] ( 82.71, 30.69) --
	( 82.71,283.58);

\path[draw=drawColor,line width= 0.3pt,line join=round] (113.29, 30.69) --
	(113.29,283.58);

\path[draw=drawColor,line width= 0.3pt,line join=round] (143.87, 30.69) --
	(143.87,283.58);

\path[draw=drawColor,line width= 0.3pt,line join=round] (174.45, 30.69) --
	(174.45,283.58);

\path[draw=drawColor,line width= 0.3pt,line join=round] (205.03, 30.69) --
	(205.03,283.58);

\path[draw=drawColor,line width= 0.3pt,line join=round] (235.61, 30.69) --
	(235.61,283.58);

\path[draw=drawColor,line width= 0.3pt,line join=round] (266.19, 30.69) --
	(266.19,283.58);

\path[draw=drawColor,line width= 0.3pt,line join=round] (296.77, 30.69) --
	(296.77,283.58);

\path[draw=drawColor,line width= 0.3pt,line join=round] (327.35, 30.69) --
	(327.35,283.58);

\path[draw=drawColor,line width= 0.6pt,line join=round] ( 36.84, 42.18) --
	(342.64, 42.18);

\path[draw=drawColor,line width= 0.6pt,line join=round] ( 36.84, 88.16) --
	(342.64, 88.16);

\path[draw=drawColor,line width= 0.6pt,line join=round] ( 36.84,134.14) --
	(342.64,134.14);

\path[draw=drawColor,line width= 0.6pt,line join=round] ( 36.84,180.12) --
	(342.64,180.12);

\path[draw=drawColor,line width= 0.6pt,line join=round] ( 36.84,226.10) --
	(342.64,226.10);

\path[draw=drawColor,line width= 0.6pt,line join=round] ( 36.84,272.08) --
	(342.64,272.08);

\path[draw=drawColor,line width= 0.6pt,line join=round] ( 36.84, 30.69) --
	( 36.84,283.58);

\path[draw=drawColor,line width= 0.6pt,line join=round] ( 67.42, 30.69) --
	( 67.42,283.58);

\path[draw=drawColor,line width= 0.6pt,line join=round] ( 98.00, 30.69) --
	( 98.00,283.58);

\path[draw=drawColor,line width= 0.6pt,line join=round] (128.58, 30.69) --
	(128.58,283.58);

\path[draw=drawColor,line width= 0.6pt,line join=round] (159.16, 30.69) --
	(159.16,283.58);

\path[draw=drawColor,line width= 0.6pt,line join=round] (189.74, 30.69) --
	(189.74,283.58);

\path[draw=drawColor,line width= 0.6pt,line join=round] (220.32, 30.69) --
	(220.32,283.58);

\path[draw=drawColor,line width= 0.6pt,line join=round] (250.90, 30.69) --
	(250.90,283.58);

\path[draw=drawColor,line width= 0.6pt,line join=round] (281.48, 30.69) --
	(281.48,283.58);

\path[draw=drawColor,line width= 0.6pt,line join=round] (312.06, 30.69) --
	(312.06,283.58);

\path[draw=drawColor,line width= 0.6pt,line join=round] (342.64, 30.69) --
	(342.64,283.58);
\definecolor{fillColor}{RGB}{70,130,180}

\path[fill=fillColor,fill opacity=0.20] ( 61.13, 81.56) --
	( 68.35,102.69) --
	( 75.57,115.54) --
	( 82.79,124.31) --
	( 90.01,134.38) --
	( 97.23,137.86) --
	(104.44,145.20) --
	(111.66,150.29) --
	(118.88,157.85) --
	(126.10,159.65) --
	(133.32,164.28) --
	(140.53,168.04) --
	(147.75,173.83) --
	(154.97,179.78) --
	(162.19,182.85) --
	(169.41,184.93) --
	(176.62,186.73) --
	(183.84,188.59) --
	(191.06,194.35) --
	(198.28,194.84) --
	(205.50,199.61) --
	(212.71,201.32) --
	(219.93,203.72) --
	(227.15,206.92) --
	(234.37,209.24) --
	(241.59,210.05) --
	(248.81,215.17) --
	(256.02,216.32) --
	(263.24,218.87) --
	(270.46,222.42) --
	(277.68,223.17) --
	(284.90,225.48) --
	(292.11,226.59) --
	(299.33,228.26) --
	(306.55,232.04) --
	(313.77,234.29) --
	(320.99,237.50) --
	(328.20,238.00) --
	(335.42,243.24) --
	(342.64,272.08) --
	(342.64,272.08) --
	(335.42,243.23) --
	(328.20,237.99) --
	(320.99,237.49) --
	(313.77,234.29) --
	(306.55,232.03) --
	(299.33,228.26) --
	(292.11,226.58) --
	(284.90,225.47) --
	(277.68,223.16) --
	(270.46,222.41) --
	(263.24,218.86) --
	(256.02,216.31) --
	(248.81,215.16) --
	(241.59,210.04) --
	(234.37,209.22) --
	(227.15,206.91) --
	(219.93,203.71) --
	(212.71,201.30) --
	(205.50,199.59) --
	(198.28,194.82) --
	(191.06,194.33) --
	(183.84,188.57) --
	(176.62,186.71) --
	(169.41,184.91) --
	(162.19,182.83) --
	(154.97,179.75) --
	(147.75,173.80) --
	(140.53,168.01) --
	(133.32,164.24) --
	(126.10,159.60) --
	(118.88,157.81) --
	(111.66,150.24) --
	(104.44,145.14) --
	( 97.23,137.79) --
	( 90.01,134.30) --
	( 82.79,124.21) --
	( 75.57,115.42) --
	( 68.35,102.52) --
	( 61.13, 81.26) --
	cycle;

\path[] ( 61.13, 81.56) --
	( 68.35,102.69) --
	( 75.57,115.54) --
	( 82.79,124.31) --
	( 90.01,134.38) --
	( 97.23,137.86) --
	(104.44,145.20) --
	(111.66,150.29) --
	(118.88,157.85) --
	(126.10,159.65) --
	(133.32,164.28) --
	(140.53,168.04) --
	(147.75,173.83) --
	(154.97,179.78) --
	(162.19,182.85) --
	(169.41,184.93) --
	(176.62,186.73) --
	(183.84,188.59) --
	(191.06,194.35) --
	(198.28,194.84) --
	(205.50,199.61) --
	(212.71,201.32) --
	(219.93,203.72) --
	(227.15,206.92) --
	(234.37,209.24) --
	(241.59,210.05) --
	(248.81,215.17) --
	(256.02,216.32) --
	(263.24,218.87) --
	(270.46,222.42) --
	(277.68,223.17) --
	(284.90,225.48) --
	(292.11,226.59) --
	(299.33,228.26) --
	(306.55,232.04) --
	(313.77,234.29) --
	(320.99,237.50) --
	(328.20,238.00) --
	(335.42,243.24) --
	(342.64,272.08);

\path[] (342.64,272.08) --
	(335.42,243.23) --
	(328.20,237.99) --
	(320.99,237.49) --
	(313.77,234.29) --
	(306.55,232.03) --
	(299.33,228.26) --
	(292.11,226.58) --
	(284.90,225.47) --
	(277.68,223.16) --
	(270.46,222.41) --
	(263.24,218.86) --
	(256.02,216.31) --
	(248.81,215.16) --
	(241.59,210.04) --
	(234.37,209.22) --
	(227.15,206.91) --
	(219.93,203.71) --
	(212.71,201.30) --
	(205.50,199.59) --
	(198.28,194.82) --
	(191.06,194.33) --
	(183.84,188.57) --
	(176.62,186.71) --
	(169.41,184.91) --
	(162.19,182.83) --
	(154.97,179.75) --
	(147.75,173.80) --
	(140.53,168.01) --
	(133.32,164.24) --
	(126.10,159.60) --
	(118.88,157.81) --
	(111.66,150.24) --
	(104.44,145.14) --
	( 97.23,137.79) --
	( 90.01,134.30) --
	( 82.79,124.21) --
	( 75.57,115.42) --
	( 68.35,102.52) --
	( 61.13, 81.26);
\definecolor{fillColor}{RGB}{238,44,44}

\path[fill=fillColor,fill opacity=0.15] ( 61.13,187.41) --
	( 68.35,196.63) --
	( 75.57,202.05) --
	( 82.79,207.81) --
	( 90.01,211.70) --
	( 97.23,215.85) --
	(104.44,219.06) --
	(111.66,222.06) --
	(118.88,224.96) --
	(126.10,227.17) --
	(133.32,229.60) --
	(140.53,232.14) --
	(147.75,234.83) --
	(154.97,237.21) --
	(162.19,239.37) --
	(169.41,240.47) --
	(176.62,242.56) --
	(183.84,244.08) --
	(191.06,245.82) --
	(198.28,246.84) --
	(205.50,248.95) --
	(212.71,250.04) --
	(219.93,251.14) --
	(227.15,252.89) --
	(234.37,253.84) --
	(241.59,255.32) --
	(248.81,256.53) --
	(256.02,257.66) --
	(263.24,259.14) --
	(270.46,260.70) --
	(277.68,261.52) --
	(284.90,263.02) --
	(292.11,263.85) --
	(299.33,265.23) --
	(306.55,266.60) --
	(313.77,267.92) --
	(320.99,269.22) --
	(328.20,270.58) --
	(335.42,271.58) --
	(342.64,272.08) --
	(342.64,272.08) --
	(335.42,271.51) --
	(328.20,270.46) --
	(320.99,269.07) --
	(313.77,267.75) --
	(306.55,266.40) --
	(299.33,265.02) --
	(292.11,263.62) --
	(284.90,262.79) --
	(277.68,261.25) --
	(270.46,260.43) --
	(263.24,258.83) --
	(256.02,257.31) --
	(248.81,256.16) --
	(241.59,254.94) --
	(234.37,253.40) --
	(227.15,252.46) --
	(219.93,250.66) --
	(212.71,249.53) --
	(205.50,248.43) --
	(198.28,246.27) --
	(191.06,245.24) --
	(183.84,243.46) --
	(176.62,241.92) --
	(169.41,239.77) --
	(162.19,238.66) --
	(154.97,236.46) --
	(147.75,234.02) --
	(140.53,231.30) --
	(133.32,228.72) --
	(126.10,226.21) --
	(118.88,224.00) --
	(111.66,221.06) --
	(104.44,217.99) --
	( 97.23,214.73) --
	( 90.01,210.51) --
	( 82.79,206.57) --
	( 75.57,200.67) --
	( 68.35,195.18) --
	( 61.13,185.87) --
	cycle;

\path[] ( 61.13,187.41) --
	( 68.35,196.63) --
	( 75.57,202.05) --
	( 82.79,207.81) --
	( 90.01,211.70) --
	( 97.23,215.85) --
	(104.44,219.06) --
	(111.66,222.06) --
	(118.88,224.96) --
	(126.10,227.17) --
	(133.32,229.60) --
	(140.53,232.14) --
	(147.75,234.83) --
	(154.97,237.21) --
	(162.19,239.37) --
	(169.41,240.47) --
	(176.62,242.56) --
	(183.84,244.08) --
	(191.06,245.82) --
	(198.28,246.84) --
	(205.50,248.95) --
	(212.71,250.04) --
	(219.93,251.14) --
	(227.15,252.89) --
	(234.37,253.84) --
	(241.59,255.32) --
	(248.81,256.53) --
	(256.02,257.66) --
	(263.24,259.14) --
	(270.46,260.70) --
	(277.68,261.52) --
	(284.90,263.02) --
	(292.11,263.85) --
	(299.33,265.23) --
	(306.55,266.60) --
	(313.77,267.92) --
	(320.99,269.22) --
	(328.20,270.58) --
	(335.42,271.58) --
	(342.64,272.08);

\path[] (342.64,272.08) --
	(335.42,271.51) --
	(328.20,270.46) --
	(320.99,269.07) --
	(313.77,267.75) --
	(306.55,266.40) --
	(299.33,265.02) --
	(292.11,263.62) --
	(284.90,262.79) --
	(277.68,261.25) --
	(270.46,260.43) --
	(263.24,258.83) --
	(256.02,257.31) --
	(248.81,256.16) --
	(241.59,254.94) --
	(234.37,253.40) --
	(227.15,252.46) --
	(219.93,250.66) --
	(212.71,249.53) --
	(205.50,248.43) --
	(198.28,246.27) --
	(191.06,245.24) --
	(183.84,243.46) --
	(176.62,241.92) --
	(169.41,239.77) --
	(162.19,238.66) --
	(154.97,236.46) --
	(147.75,234.02) --
	(140.53,231.30) --
	(133.32,228.72) --
	(126.10,226.21) --
	(118.88,224.00) --
	(111.66,221.06) --
	(104.44,217.99) --
	( 97.23,214.73) --
	( 90.01,210.51) --
	( 82.79,206.57) --
	( 75.57,200.67) --
	( 68.35,195.18) --
	( 61.13,185.87);
\definecolor{drawColor}{RGB}{70,130,180}

\path[draw=drawColor,line width= 1.4pt,line join=round] ( 61.13, 81.41) --
	( 68.35,102.60) --
	( 75.57,115.48) --
	( 82.79,124.26) --
	( 90.01,134.34) --
	( 97.23,137.83) --
	(104.44,145.17) --
	(111.66,150.26) --
	(118.88,157.83) --
	(126.10,159.63) --
	(133.32,164.26) --
	(140.53,168.03) --
	(147.75,173.82) --
	(154.97,179.76) --
	(162.19,182.84) --
	(169.41,184.92) --
	(176.62,186.72) --
	(183.84,188.58) --
	(191.06,194.34) --
	(198.28,194.83) --
	(205.50,199.60) --
	(212.71,201.31) --
	(219.93,203.71) --
	(227.15,206.91) --
	(234.37,209.23) --
	(241.59,210.05) --
	(248.81,215.17) --
	(256.02,216.31) --
	(263.24,218.87) --
	(270.46,222.41) --
	(277.68,223.16) --
	(284.90,225.47) --
	(292.11,226.58) --
	(299.33,228.26) --
	(306.55,232.03) --
	(313.77,234.29) --
	(320.99,237.49) --
	(328.20,238.00) --
	(335.42,243.24) --
	(342.64,272.08);
\definecolor{drawColor}{RGB}{238,44,44}

\path[draw=drawColor,line width= 1.4pt,line join=round] ( 61.13,186.66) --
	( 68.35,195.92) --
	( 75.57,201.38) --
	( 82.79,207.20) --
	( 90.01,211.11) --
	( 97.23,215.30) --
	(104.44,218.53) --
	(111.66,221.57) --
	(118.88,224.48) --
	(126.10,226.69) --
	(133.32,229.16) --
	(140.53,231.72) --
	(147.75,234.43) --
	(154.97,236.84) --
	(162.19,239.02) --
	(169.41,240.12) --
	(176.62,242.24) --
	(183.84,243.77) --
	(191.06,245.53) --
	(198.28,246.55) --
	(205.50,248.69) --
	(212.71,249.79) --
	(219.93,250.90) --
	(227.15,252.67) --
	(234.37,253.62) --
	(241.59,255.13) --
	(248.81,256.35) --
	(256.02,257.49) --
	(263.24,258.98) --
	(270.46,260.57) --
	(277.68,261.39) --
	(284.90,262.90) --
	(292.11,263.73) --
	(299.33,265.12) --
	(306.55,266.50) --
	(313.77,267.84) --
	(320.99,269.14) --
	(328.20,270.52) --
	(335.42,271.54) --
	(342.64,272.08);
\end{scope}
\begin{scope}
\path[clip] (  0.00,  0.00) rectangle (433.62,289.08);
\definecolor{drawColor}{gray}{0.30}

\node[text=drawColor,anchor=base west,inner sep=0pt, outer sep=0pt, scale=  0.88] at ( 17.96, 38.41) {10};

\node[text=drawColor,anchor=base west,inner sep=0pt, outer sep=0pt, scale=  0.62] at ( 26.76, 42.00) {-};

\node[text=drawColor,anchor=base west,inner sep=0pt, outer sep=0pt, scale=  0.62] at ( 28.82, 42.00) {5};

\node[text=drawColor,anchor=base west,inner sep=0pt, outer sep=0pt, scale=  0.88] at ( 17.96, 84.39) {10};

\node[text=drawColor,anchor=base west,inner sep=0pt, outer sep=0pt, scale=  0.62] at ( 26.76, 87.99) {-};

\node[text=drawColor,anchor=base west,inner sep=0pt, outer sep=0pt, scale=  0.62] at ( 28.82, 87.99) {4};

\node[text=drawColor,anchor=base west,inner sep=0pt, outer sep=0pt, scale=  0.88] at ( 17.96,130.37) {10};

\node[text=drawColor,anchor=base west,inner sep=0pt, outer sep=0pt, scale=  0.62] at ( 26.76,133.97) {-};

\node[text=drawColor,anchor=base west,inner sep=0pt, outer sep=0pt, scale=  0.62] at ( 28.82,133.97) {3};

\node[text=drawColor,anchor=base west,inner sep=0pt, outer sep=0pt, scale=  0.88] at ( 17.96,176.35) {10};

\node[text=drawColor,anchor=base west,inner sep=0pt, outer sep=0pt, scale=  0.62] at ( 26.76,179.95) {-};

\node[text=drawColor,anchor=base west,inner sep=0pt, outer sep=0pt, scale=  0.62] at ( 28.82,179.95) {2};

\node[text=drawColor,anchor=base west,inner sep=0pt, outer sep=0pt, scale=  0.88] at ( 17.96,222.33) {10};

\node[text=drawColor,anchor=base west,inner sep=0pt, outer sep=0pt, scale=  0.62] at ( 26.76,225.93) {-};

\node[text=drawColor,anchor=base west,inner sep=0pt, outer sep=0pt, scale=  0.62] at ( 28.82,225.93) {1};

\node[text=drawColor,anchor=base west,inner sep=0pt, outer sep=0pt, scale=  0.88] at ( 20.02,268.31) {10};

\node[text=drawColor,anchor=base west,inner sep=0pt, outer sep=0pt, scale=  0.62] at ( 28.82,271.91) {0};
\end{scope}
\begin{scope}
\path[clip] (  0.00,  0.00) rectangle (433.62,289.08);
\definecolor{drawColor}{gray}{0.30}

\node[text=drawColor,anchor=base,inner sep=0pt, outer sep=0pt, scale=  0.88] at ( 36.84, 19.68) {0.0};

\node[text=drawColor,anchor=base,inner sep=0pt, outer sep=0pt, scale=  0.88] at ( 67.42, 19.68) {0.1};

\node[text=drawColor,anchor=base,inner sep=0pt, outer sep=0pt, scale=  0.88] at ( 98.00, 19.68) {0.2};

\node[text=drawColor,anchor=base,inner sep=0pt, outer sep=0pt, scale=  0.88] at (128.58, 19.68) {0.3};

\node[text=drawColor,anchor=base,inner sep=0pt, outer sep=0pt, scale=  0.88] at (159.16, 19.68) {0.4};

\node[text=drawColor,anchor=base,inner sep=0pt, outer sep=0pt, scale=  0.88] at (189.74, 19.68) {0.5};

\node[text=drawColor,anchor=base,inner sep=0pt, outer sep=0pt, scale=  0.88] at (220.32, 19.68) {0.6};

\node[text=drawColor,anchor=base,inner sep=0pt, outer sep=0pt, scale=  0.88] at (250.90, 19.68) {0.7};

\node[text=drawColor,anchor=base,inner sep=0pt, outer sep=0pt, scale=  0.88] at (281.48, 19.68) {0.8};

\node[text=drawColor,anchor=base,inner sep=0pt, outer sep=0pt, scale=  0.88] at (312.06, 19.68) {0.9};

\node[text=drawColor,anchor=base,inner sep=0pt, outer sep=0pt, scale=  0.88] at (342.64, 19.68) {1.0};
\end{scope}
\begin{scope}
\path[clip] (  0.00,  0.00) rectangle (433.62,289.08);
\definecolor{drawColor}{RGB}{0,0,0}

\node[text=drawColor,anchor=base,inner sep=0pt, outer sep=0pt, scale=  1.10] at (189.74,  7.64) {$\delta$};
\end{scope}
\begin{scope}
\path[clip] (  0.00,  0.00) rectangle (433.62,289.08);
\definecolor{drawColor}{RGB}{0,0,0}

\node[text=drawColor,rotate= 90.00,anchor=base,inner sep=0pt, outer sep=0pt, scale=  1.10] at ( 13.08,157.13) {Value};
\end{scope}
\begin{scope}
\path[clip] (  0.00,  0.00) rectangle (433.62,289.08);
\definecolor{drawColor}{RGB}{70,130,180}

\path[draw=drawColor,line width= 1.4pt,line join=round] (360.59,156.75) -- (372.15,156.75);
\end{scope}
\begin{scope}
\path[clip] (  0.00,  0.00) rectangle (433.62,289.08);
\definecolor{drawColor}{RGB}{238,44,44}

\path[draw=drawColor,line width= 1.4pt,line join=round] (360.59,142.30) -- (372.15,142.30);
\end{scope}
\begin{scope}
\path[clip] (  0.00,  0.00) rectangle (433.62,289.08);
\definecolor{drawColor}{RGB}{0,0,0}

\node[text=drawColor,anchor=base west,inner sep=0pt, outer sep=0pt, scale=  0.88] at (379.09,153.72) {$\overline{F}$};
\end{scope}
\begin{scope}
\path[clip] (  0.00,  0.00) rectangle (433.62,289.08);
\definecolor{drawColor}{RGB}{0,0,0}

\node[text=drawColor,anchor=base west,inner sep=0pt, outer sep=0pt, scale=  0.88] at (379.09,139.27) {$\overline{T}$};
\end{scope}
\end{tikzpicture}

%% file: conclusion.tex
In this paper, we considered data universes, data-release mechanisms, approximate differential privacy, and probabilistic differential privacy within a common measure-theoretic framework. 
With this unified formulation, we rederived the well-known result that probabilistic differential privacy implies approximate differential privacy, as well as the converse implication with modified $(\varepsilon,\delta)$-parameters. 

Using the implication from approximate differential privacy to probabilistic differential privacy, we derived upper and lower bounds for posterior-to-prior ratios of inclusion beliefs for data-release mechanisms satisfying bounded probabilistic or approximate differential privacy.
These bounds quantify the extent to which observing a mechanism output can alter prior beliefs about inclusion events. Since the bounds are only guaranteed when the mechanism output falls outside certain regions of the output space, we further studied the probability that an output lies within such regions. Using a geometric approach, we derived an exact expression for this probability for the Gaussian mechanism. However, as violations of the bounds are not guaranteed within these regions, this probability constitutes an upper limit on the failure probability. To investigate the potential gap between the failure probability and its theoretical upper limit, we conducted a Monte Carlo study. The simulation results indicate that the theoretical upper limit is conservative, lying substantially above the estimated failure probability throughout the parameter sweeps considered in the study.

These findings aid in the assessment of privacy risks associated with the release of differentially private data. The posterior-to-prior ratio bounds can help a data holder select privacy parameters that are consistent with a predetermined acceptable level of privacy risk. For the Gaussian mechanism, the theoretical upper limit on the failure probability can be calculated using the formula derived in this paper. Since this upper limit appears to be considerably larger than the actual failure probability, calibrating the mechanism to keep the limit below an acceptable level provides a conservative privacy guarantee.

Although our framework provides rigorous results, several limitations remain. First, our derivations of the posterior-to-prior ratio bounds and the theoretical upper limit on the failure probability rely on the assumption that the underlying population is finite and known. Extending these results to settings with unknown, infinite, or uncountable populations would therefore be a valuable direction for future research. Second, while we only consider the posterior-to-prior ratios for inclusion events, adverse inferences encompass a broader range of inference types, including attribute inference and data reconstruction attacks. Extending the bounds to a wider class of inference problems would be an important direction for future research. Finally, the true failure probability of the posterior-to-prior ratio bounds may depend strongly on the specific data-release mechanism. While the theoretical upper limit was analytically tractable for the Gaussian mechanism, analogous derivations may be intractable for more general mechanisms. Investigating practical methods for estimating failure probabilities under arbitrary data-release mechanisms is therefore a highly relevant avenue for future work.